\documentclass[11pt]{article}
\usepackage[all]{xy}
\usepackage{amsfonts}
\usepackage{amsthm}
\usepackage{enumerate}
\usepackage{graphicx}
\usepackage{mathrsfs}
\usepackage{bm}
\usepackage{cite}
\usepackage{amssymb,amsmath} % font used for R in Real numbers
\usepackage{makecell}
\usepackage{tikz}
\usepackage{pgf}
\usepackage{tikz}
\usetikzlibrary{patterns}
\usepackage{pgffor}
\usepackage{pgfcalendar}
\usepackage{pgfpages}
\usepackage{shuffle,yfonts}
\usepackage{mathtools}
\DeclareFontFamily{U}{shuffle}{}
\DeclareFontShape{U}{shuffle}{m}{n}{ <-8>shuffle7 <8->shuffle10}{}

 % textstyle x positive 1
 % textstyle x negative 1

\newcommand{\bfs}{{\boldsymbol{\sl{s}}}}

\textwidth=160truemm \textheight=225truemm \evensidemargin=0mm
\oddsidemargin=0mm \topmargin=0mm \headsep=0mm
\parindent=2em
 \allowdisplaybreaks
\usetikzlibrary{arrows,shapes,chains}
\textwidth=160truemm \textheight=225truemm \evensidemargin=0mm
\oddsidemargin=0mm \topmargin=0mm \headsep=0mm
\parindent=2em
 \allowdisplaybreaks

%% redefinition of control sequences
\catcode`!=11
\let\!int\int \def\int{\displaystyle\!int}
\let\!lim\lim \def\lim{\displaystyle\!lim}
\let\!sum\sum \def\sum{\displaystyle\!sum}
\let\!sup\sup \def\sup{\displaystyle\!sup}
\let\!inf\inf \def\inf{\displaystyle\!inf}
\let\!cap\cap \def\cap{\displaystyle\!cap}
\let\!max\max \def\max{\displaystyle\!max}
\let\!min\min \def\min{\displaystyle\!min}
\let\!frac\frac \def\frac{\displaystyle\!frac}
\catcode`!=12

%%%%%%%%%%%%%%%%%%%%%%%%%%%%%%%%%%%%%%%%%%%%%%%%%%%%%%%%%%%%%%%%%%
\let\oldsection\section
\renewcommand\section{\setcounter{equation}{0}\oldsection}

\allowdisplaybreaks

\def\R{\mathbb{R}}

\def\N{\mathbb{N}}
\def\Z{\mathbb{Z}}
\def\Q{\mathbb{Q}}

\def\ze{\zeta}

\theoremstyle{plain}
\newtheorem{thm}{Theorem}[section]
\newtheorem{lem}[thm]{Lemma}
\newtheorem{cor}[thm]{Corollary}
\newtheorem{con}[thm]{Conjecture}
\newtheorem{pro}[thm]{Proposition}
\theoremstyle{definition}
\newtheorem{defn}{Definition}[section]
\newtheorem{re}[thm]{Remark}
\newtheorem{exa}[thm]{Example}

\setlength{\arraycolsep}{0.5mm}

\begin{document}
	%%%%%%%%%%%%%%%%%%%% title %%%%%%%%%%%%%%%%%%%%%%%%%%%%%%%%%%%%%%%%%%%%%%%%
\title{\bf Mixed Berndt-Type Integrals and Generalized Barnes Multiple Zeta Functions}
\author{
		{ Jianing Zhou\thanks{Email: 202421511272@smail.xtu.edu.cn}}\\[1mm]
		\small School of
		Mathematics and Computational Science, Xiangtan University, \\ \small Xiangtan 411105,  P.R. China}
	
\date{}
\maketitle
	
\noindent{\bf Abstract.} In this paper, we define and study four families of Berndt-type integrals, called mixed Berndt-type integrals, which contain (hyperbolic) sine and cosine functions in the integrand function.  Using contour integration, these integrals are first converted to some hyperbolic (infinite) sums of Ramanujan type, all of which can be calculated in closed form by  comparing both the Fourier series expansions and the Maclaurin series expansions of certain Jacobi elliptic functions. These sums can be expressed as rational polynomials of $\Gamma(1/4)$ and $\pi^{-1}$ which give rise to the closed formulas of the mixed Berndt-type integrals we are interested in. Moreover, we also present some interesting consequences and illustrative examples. Additionally, we define a generalized Barnes multiple zeta function, and find a classic integral representation of the generalized Barnes multiple zeta function. Furthermore, we give an alternative evaluation of the mixed Berndt-type integrals in terms of the generalized Barnes multiple zeta function. Finally, we obtain some direct evaluations of rational linear combinations of the generalized Barnes multiple zeta function.

\medskip
\noindent{\bf Keywords}: Berndt-type integral, Hyperbolic and trigonometric functions, Contour integration, Jacobi elliptic functions, Generalized Barnes multiple zeta functions.

	\noindent{\bf AMS Subject Classifications (2020):}  33E05, 33E20, 44A05, 11M99.
	\tableofcontents
\section{Introduction}

Let $m>0$ be an integer. The \emph{Berndt-type integrals of order $p$} are defined by the form (see Bradshaw-Vignat \cite{BV2024} and Xu-Zhao \cite{XZ2024})
\begin{align}\label{BTI-definition-1}
I_{\pm }(s,m):=\int_0^\infty \frac{x^{s-1}\mathrm{d}x}{(\cos x\pm\cosh x)^m},
\end{align}
where $m\geq 1$ and $s\geq 1$ if the denominator has ``$+$' sign, and $s\geq 2m+1$ otherwise. This improper integral traces back to S. Ramanujan, who first submitted it as a problem over a century ago to the \emph{Indian Journal of Pure and Applied Mathematics} \cite[pp. 325-326]{Rama1916}:
\begin{equation*}%\label{inte-Ramanujan}
\int_0^\infty \frac{\sin(nx)}{x(\cos x+\cosh x)}\mathrm{d}x=\frac{\pi}{4}.
\end{equation*}
Here, $n$ is an odd positive integer.  After submission, Wilkinson \cite{W1916} proved it.

 About 80 years later, Ismail and Valent \cite{Ismail1998} found the mystery integral (\emph{Ismail-type integrals})
\begin{align}\label{A.KuznetsovInt}
	\int_{-\infty}^{\infty}{\frac{\mathrm{d}t}{\cos \left( K\sqrt{t} \right) +\cosh \left( K'\sqrt{t} \right)}=2},
\end{align}
	where $K
	\equiv K(x)$ denotes the complete elliptic integral of the first kind and $K'=K(1-x)$, see Section 2. The integrand in \eqref{A.KuznetsovInt} is the weight function for a sequence of polynomials arising in the Hamburger moment problem studied by Ismail and Valent. The systematic study of Berndt-type integrals began with the works of Kuznetsov \cite{K2017} and Berndt \cite{Berndt2016} about ten years ago. Some recent results on infinite series involving Berndt-type integrals may be found in the works of \cite{BV2024,PanWang2025,RXYZ2024,RXZ2023,XZ2023,XZ2024,ZhangRui2024} and the references therein. Kuznetsov \cite{K2017} used the contour integration and theta functions to prove a more general result of Ismail-type
integrals
\begin{align}\label{Ims-Kuz+}
\frac1{2}\int_{-\infty}^\infty \frac{t^n \mathrm{d}t}{\cos(K\sqrt{t})+\cosh(K'\sqrt{t})}=(-1)^n \frac{\mathrm{d}^{2n+1}}{\mathrm{d}u^{2n+1}} \frac{{\rm sn} (u,k)}{{\rm cd}(u,k)}|_{u=0},
\end{align}
where $x=k^2\ (0<k<1)$ and the Jacobi elliptic function ${\rm sn} (u,k)$ is defined via the inversion of the elliptic integral
\begin{align}
u=\int_0^\varphi \frac{\mathrm{d}t}{\sqrt{1-k^2\sin^2 t}}\quad (0<k^2<1),
\end{align}
namely, ${\rm sn}(u):=\sin \varphi$. As before, we refer $k$ as the elliptic modulus. We also write $\varphi={\rm am}(u,k)={\rm am}(u)$ and call it the Jacobi amplitude.
The Jacobi elliptic function ${\rm cd} (u,k)$ may be defined as follows:
\begin{align*}
& {\rm cd}(u,k):=\frac{{\rm cn}(u,k)}{{\rm dn}(u,k)},
\end{align*}
where ${\rm cn}(u,k):=\sqrt{1-{\rm sn}^2 (u,k)}$ and ${\rm dn} (u,k):=\sqrt{1-k^2{\rm sn}^2(u,k)}$. Berndt \cite{Berndt2016} provided a direct evaluation of \eqref{A.KuznetsovInt} and other similar integrals of this type by using Cauchy's residue theory, the Fourier series expansions, and the Maclaurin series expansions of certain Jacobi elliptic functions. In particular, using Berndt's results \cite{Berndt2016} and Xu-Zhao's result \cite{XZ2024}, it is easy to see that the Berndt-type integrals of order one have the following structures
\begin{align*}
I_{-}(4p,1)\in \Q \frac{\Gamma^{8p}(1/4)}{\pi^{2p}}\quad (p\in \N)\quad\text{and}\quad I_{+}(4p+2,1)\in \Q \frac{\Gamma^{8p+4}(1/4)}{\pi^{2p+1}}\quad (p\in \N_0:=\N\cup\{0\}).
\end{align*}
Moreover, Berndt \cite[Thm. 6.1]{Berndt2016} also considered some integrals containing $\sinh x\pm i \sin x$ in their integrands instead of $\cosh x\pm \cos x$.
Further, using the contour integrals and the Fourier series expansion and Maclaurin series expansion of a certain Jacobi elliptic function, Rui-Xu-Yang-Zhao obtained a structural result on the original Berndt type integral involving (hyperbolic) sine functions (see \cite[Thm. 3.4]{RXYZ2024})
\begin{align}\label{equ-org-BerndtInte}
\int_0^\infty \frac{x^{2m} (\sinh x-(-1)^m \sin x)}{\sinh^2 x+\sin^2 x}\mathrm{d}x\in \Q \frac{\Gamma^{4m+2}(1/4)}{\pi^{(2m+1)/2}}\sqrt{2}\quad (m\in \N)
\end{align}
and prove the following result involving another family of Berndt-type integrals (see \cite[Cor. 6.5]{RXYZ2024}):
\begin{align} \label{defn:BerdtIntegral}
\int_{0}^{\infty}\frac{x^{4p+1}(\sin^2 x-\sinh^2 x)}{\left(\sinh^2 x+\sin^2 x\right)^2}\mathrm{d}x\in \Q\frac{\Gamma^{8p+4}(1/4)}{\pi^{2p+2}}+\Q\frac{\Gamma^{8p+8}(1/4)}{\pi^{2p+4}}\quad (p\in \N).
\end{align}
Recently, by extending an argument as used in the proof of the main theorems of \cite{Berndt2016}, Xu and Zhao \cite{XZ2023} derived explicit evaluations for two families of order-two Berndt-type integrals (cf. \cite[Thm. 1.3]{XZ2023}):
\begin{align*}
&I_{\pm}(4p+2,2)\in \Q\frac{\Gamma^{8p}(1/4)}{\pi^{2p}}+\Q\frac{\Gamma^{8p+8}(1/4)}{\pi^{2p+4}}\quad (p\in \N).
\end{align*}
Further, Rui, Xu and Zhao \cite[Thm. 1.1]{RXZ2023} established explicit evaluations for two families of order-three Berndt-type integrals using special values of the Gamma function.
More generally, let $X=\Gamma^4(1/4)$ and $Y=\pi^{-1}$, and $\Q[x, y]$ denotes all rational linear combinations of products of $x$ and $y$ (the
polynomial ring generated by $x$ and $y$ with rational coefficients). Xu and Zhao \cite{XZ2023,XZ2024} proved two structural theorems on the general Berndt-type integrals with the denominator having arbitrary positive degrees. Specifically, they have shown that for all integers $m\geq 1$ and $p\geq [m/2]$, the Berndt-type integrals satisfy
\begin{align*}
I_{+}(4p+2,m)\in\mathbb{Q}[X,Y],
\end{align*}
where the degrees of $X$ have the same parity as that of $m$ and are between $2p-m+2$ and $2p+m$, inclusive, while the degrees of $Y$ are between $2p-m+2$ and $2p+3m-2$, inclusive. Similarly, they have shown that
\begin{align*}
I_{-}(4p,2m+1)\in\mathbb{Q}[X,Y]
\end{align*}
for all $p\geq m+1\geq 1$ and that the degrees of $X$ in each term are always even and between $2p-2m$ and $2p+2m$, while the degrees of $Y$ are between $2p-2m$ and
$2p+6m$. In the even order case, they have shown that
\begin{align*}
I_{-}(4p+1,2m)\in\mathbb{Q}[X,Y]
\end{align*}
for all integers $p\geq m \geq 1$, where the powers of $X$  lie between $2p+2-2m$ and $2p+2m$, while the degrees of $Y$ are between $2p+2-2m$ and $2p+6m-2$. Very recently,  Bradshaw and Vignat  \cite[Cor. 2]{BV2024} surprisingly discovered a close connection between Berndt-type integrals and Barnes zeta functions. Moreover, Bradshaw and Vignat \cite[Cor. 5]{BV2024} used the Kuznetsov's method to prove the following relation between Jacobi elliptic functions and another Ismail-type integrals: for $n\in \N_0$,
\begin{align}\label{Ims-Kuz-1}
\frac1{2}\int_{-\infty}^\infty \frac{x^{n+1} dx}{\cos(\sqrt{x}K)-\cosh(\sqrt{x}K')}=(-1)^{n+1}4 \frac{d^{2n+1}}{du^{2n+1}} \frac{{\rm sn}^2 (u,k)}{{\rm cd}^2(u,k){\rm sd}(2u,k)}|_{u=0},
\end{align}
where the Jacobi elliptic function ${\rm sd}(u)\equiv {\rm sd}(u,k)$  defined by
\begin{align}\label{defin-sd}
	{\rm sd}(u,k):=\frac{{\rm sn}(u,k)}{{\rm dn}(u,k)}.
\end{align}

From the above, it can be seen that the integrands in the Berndt-type integrals studied above either only contain (hyperbolic) sine functions or (hyperbolic) cosine functions.
In this paper, we consider the following infinite integrals containing (hyperbolic) sine functions and (hyperbolic) cosine functions in their integrands:
\begin{align}
B^{\pm }_{\pm}(s):=\int_0^{\infty}{\frac{x^s\left( \sinh x\pm \sin x \right) \mathrm{d}x}{\left(\sinh ^2x+\sin ^2x\right) \left( \cosh x\pm \cos x\right)}}.
\end{align}
We call them \emph{mixed Berndt-type integrals}. Using contour integration and series associated with Jacobi elliptic functions, we establish two structural theorems for these integrals. Specifically,  we prove the following evaluations: let $\Gamma:=\Gamma(1/4)$, for $m\in \N$,
\begin{align}
B^{- }_{+}(4m)=\int_0^{\infty}{\frac{x^{4m}\left( \sinh x-\sin x \right) \mathrm{d}x}{\left( \sinh ^2x+\sin ^2x \right)\left( \cosh x+\cos x \right)}}&\in \mathbb{Q} \frac{\Gamma ^{8m}}{\pi ^{2m}}+\mathbb{Q} \frac{\Gamma ^{8m+4}}{\pi ^{2m+2}},
\\
B^{+}_{+}(4m-2)=\int_0^{\infty}{\frac{x^{4m-2}\left( \sin x+\sinh x \right) \mathrm{d}x}{\left(\sinh ^2x+\sin ^2x \right) \left(\cosh x+\cos x \right)}}&\in \mathbb{Q} \frac{\Gamma ^{8m-4}}{\pi ^{2m-1}}+\mathbb{Q} \frac{\Gamma ^{8m}}{\pi ^{2m+1}},
  \end{align}
  and
\begin{align}
B^{+}_{-}(4m)=\int_0^{\infty}{\frac{x^{4m}\left( \sinh x+\sin x \right) \mathrm{d}x}{\left(\sinh ^2x+\sin ^2x \right) \left( \cosh x-\cos x \right)}}&\in \mathbb{Q} \frac{\Gamma ^{8m}}{\pi ^{2m}}+\mathbb{Q} \frac{\Gamma ^{8m+4}}{\pi ^{2m+2}},
\\
B^{-}_{-}(4m-2)=\int_0^{\infty}{\frac{x^{4m-2}\left( \sinh x-\sin x \right) \mathrm{d}x}{\left(\sinh ^2x+\sin ^2x \right) \left(\cosh x-\cos x \right)}}&\in \mathbb{Q} \frac{\Gamma ^{8m-4}}{\pi ^{2m}}+\mathbb{Q} \frac{\Gamma ^{8m}}{\pi ^{2m+1}}\quad (m\ge 2).
  \end{align}
Furthermore, we define the \emph{generalized Barnes multiple zeta function} by:
 \begin{align*}
 \zeta _N(s,\omega |\mathbf{a}_N;\boldsymbol{\sigma }_N):=\sum_{n_1\ge 0,...,n_N\ge 0}{\frac{\left( \sigma _1 \right) ^{n_1}\cdots \left( \sigma _N \right) ^{n_N}}{(\omega +n_1a_1+\cdots +n_Na_N)^s},}\quad (\Re (\omega )>\Re (s)>N),
\end{align*}
where $\boldsymbol{\sigma }_N=\left( \sigma _1,\ldots,\sigma _N \right) \in \left\{ \pm 1 \right\} ^N$. We also derive a classic integral representation:
\begin{align*}
\zeta _N(s,\omega |\mathbf{a}_N;\boldsymbol{\sigma }_N)=\frac{1}{\Gamma \left( s \right)}\int_0^{\infty}{u^{s-1}}e^{-wu}\prod_{j=1}^N{(}1-\sigma _je^{-a_ju})^{-1}\mathrm{d}u,
\end{align*}
and establish some explicit relations between  mixed Berndt-type integrals and (generalized) Barnes multiple zeta functions. Finally, we establish some closed-form  evaluations of the combinations of several generalized Barnes multiple zeta functions.

\section{Some Definitions and Lemmas}
Adopting Ramanujan's notations, we let
\begin{align}\label{notations-Ramanujan}
	x:=k^2,\ y\left( x \right):=\pi K'/K,\ q\equiv q\left( x \right):=e^{-y},\ z:=z\left( x \right) =2K/\pi,\ z'=dz/dx.
\end{align}
where the \emph{complete elliptic integral of the first kind} is defined by
$$
K\equiv K(k): =\int_0^{\pi /2}{\frac{\mathrm{d}\varphi}{\sqrt{1-k^2\sin ^2\varphi}}=\frac{\pi}{2}}\,\,_2F_1\left(
\frac{1}{2},\frac{1}{2};1;k^2 \right) ,
$$
additionally, let $k'=\sqrt{1-k^2}$, and define the associated \emph{complete elliptic integral} by $$
K'\equiv K(k'): =\int_0^{\pi /2}{\frac{\mathrm{d}\varphi}{\sqrt{1-k'^2\sin ^2\varphi}}=\frac{\pi}{2}}\,\,_2F_1\left( \frac{1}{2},\frac{1}{2};1;k'^2 \right) .
$$
\begin{defn} 
	Let $m\in\mathbb{N}$ and $p\in\mathbb{Z}$. Define
	\begin{align*}
		S_{p,m}(y)&:=\sum_{n = 1}^{\infty}\frac{n^p}{\sinh^m(ny)},&\overline{S}_{p,m}(y)&:=\sum_{n = 1}^{\infty}\frac{n^p}{\sinh^m(ny)}(-1)^{n - 1},\\
		S_{p,m}'(y)&:=\sum_{n = 1}^{\infty}\frac{(2n - 1)^p}{\sinh^m((2n - 1)y/2)},&\overline{C}_{p,m}(y)&:=\sum_{n = 1}^{\infty}\frac{n^p}{\cosh^m(ny)}(-1)^{n - 1},\\
		\mathrm{DS}_{p,m}'(y)&:=\sum_{n=1}^{\infty}{\frac{(2n-1)^p\cosh((2n-1)y/2)}{\sinh ^m((2n-1)y/2))}},&\mathrm{DS}_{p,m}(y) &:=\sum_{n=1}^{\infty}{\frac{n^p\cosh \left( ny \right)}{\sinh ^m(ny)}}.
	\end{align*}
\end{defn}

\begin{lem}\emph{(\cite[Lem. 7.3.]{XZ2023})}\label{Taylor sn}
	For any integer $m\ge 0$ there is a polynomial $g_m(X,Y,Z)\in \mathbb{Z}[X,Y,Z]$ such that 	\begin{align}
&	\frac{\mathrm{d}^m}{\mathrm{d}u^m}\mathrm{sn}\left( u \right) =g_m\left( \sin \varphi ,\cos \varphi ,k^2 \right) \cdot \begin{cases}
		\left( 1-k^2\sin ^2\varphi \right) ^{1/2}, if\,\,m\,\,is\,\,odd;\\
		1,     if\,\,m\,\,is\,\,even,\\
	\end{cases}
\end{align}
	
				\end{lem}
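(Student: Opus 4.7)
I would prove this by induction on $m$, using three elementary identities which follow from $d\varphi/du = \mathrm{dn}(u,k)$, namely
\[
\frac{d}{du}\sin\varphi = \cos\varphi\,\mathrm{dn}(u),\qquad \frac{d}{du}\cos\varphi = -\sin\varphi\,\mathrm{dn}(u),\qquad \frac{d}{du}\mathrm{dn}(u) = -k^2\sin\varphi\cos\varphi.
\]
The base cases are immediate: $g_0(X,Y,Z) := X$ works since $\mathrm{sn}(u) = \sin\varphi$, and $g_1(X,Y,Z) := Y$ works since $\frac{d}{du}\mathrm{sn}(u) = \cos\varphi\,\mathrm{dn}(u) = \cos\varphi\,(1-k^2\sin^2\varphi)^{1/2}$.

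For the inductive step, suppose the claim holds at level $m$. If $m$ is even, then $\frac{d^m}{du^m}\mathrm{sn}(u) = g_m(\sin\varphi,\cos\varphi,k^2)$; applying the chain rule via the first two identities above pulls out exactly one factor of $\mathrm{dn}(u)$, giving
\[
g_{m+1}(X,Y,Z) := Y\,\partial_X g_m(X,Y,Z) - X\,\partial_Y g_m(X,Y,Z) \in \mathbb{Z}[X,Y,Z],
\]
which is the required odd-case form at level $m+1$. If $m$ is odd, then $\frac{d^m}{du^m}\mathrm{sn}(u) = g_m(\sin\varphi,\cos\varphi,k^2)\,\mathrm{dn}(u)$, and the product rule produces two contributions: differentiating $g_m$ and multiplying by $\mathrm{dn}(u)$ yields a factor of $\mathrm{dn}(u)^2 = 1-k^2\sin^2\varphi$, while differentiating $\mathrm{dn}(u)$ itself yields $-k^2\sin\varphi\cos\varphi\,g_m$. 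Both contributions are polynomial in $\sin\varphi,\cos\varphi,k^2$ with integer coefficients, so
\[
g_{m+1}(X,Y,Z) := (1-ZX^2)\bigl(Y\,\partial_X g_m - X\,\partial_Y g_m\bigr) - ZXY\,g_m \in \mathbb{Z}[X,Y,Z].
\]

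There is no real obstacle here --- this is essentially a bookkeeping statement --- and the only subtle point worth highlighting is why the parity pattern is forced. The apparent square root that differentiation of $\mathrm{dn}(u)$ might introduce in the odd-to-even step is in fact not a square root at all: the factor that actually appears is $\mathrm{dn}(u)^2$, which collapses to the polynomial $1-k^2\sin^2\varphi$. This single collapse is what guarantees that the factor $(1-k^2\sin^2\varphi)^{1/2}$ is either entirely present (when $m$ is odd) or entirely absent (when $m$ is even), and it is what propagates the integrality of the polynomial coefficients through the induction.
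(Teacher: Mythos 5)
Your induction is correct: the three derivative identities are the standard ones, the base cases check out, and the two recursions $g_{m+1}=Y\,\partial_X g_m-X\,\partial_Y g_m$ (even $m$) and $g_{m+1}=(1-ZX^2)(Y\,\partial_X g_m-X\,\partial_Y g_m)-ZXY\,g_m$ (odd $m$) correctly propagate both the integrality and the parity of the $\mathrm{dn}$-factor. The paper itself states this lemma only by citation to \cite[Lem.~7.3]{XZ2023} without reproducing a proof, and your argument is exactly the standard induction one would expect there, so nothing further is needed.
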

	\begin{lem}\emph{(\cite[Thm. 2.4]{XuZhao-2024})}\label{lem-2,transform}
	Let $x, y, z$ and $z'$ satisfy \eqref{notations-Ramanujan}. Given the formula $\Omega(x,e^{-y},z,z')=0$, we have the transformation formulas
		\begin{align}\label{lem-for-one}
	&\Omega \left( 1-x,e^{-\pi ^2/y},yz/\pi ,\frac{1}{\pi}\left( \frac{1}{x\left( 1-x \right) z}-yz' \right) \right) =0,
	\\
	&\Omega \left( \frac{x}{x-1},-e^{-y},z\sqrt{1-x},\left( 1-x \right) ^{\tfrac{3}{2}}\left( \frac{z}{2}-\left( 1-x \right) z' \right) \right) =0,
	\\
	&\Omega \left( \left( \frac{1-\sqrt{1-x}}{1+\sqrt{1-x}} \right) ^2,e^{-2y},\frac{z\left( 1+\sqrt{1-x} \right)}{2},\frac{\left( 1+\sqrt{1-x} \right) ^3}{4\left( 1-\sqrt{1-x} \right)}\left( \left( 1-x+\sqrt{1-x} \right)z'-\frac{z}{2} \right) \right) =0.\label{lem-for-trans-3}
		\end{align}
	\end{lem}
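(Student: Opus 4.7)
The statement asserts that each of the three output tuples is precisely the Ramanujan-type data $(X,e^{-Y},Z,Z')$ associated with a new modulus. The plan is to identify which classical modular transformation each formula encodes and verify the four entries one-by-one. Formula \eqref{lem-for-one} encodes the complementary-modulus transformation $k\mapsto k'$, so $x\mapsto 1-x$; the second formula encodes Jacobi's imaginary-modulus transformation $k\mapsto ik/k'$, so $x\mapsto x/(x-1)$; and \eqref{lem-for-trans-3} encodes the descending Landen transformation $k\mapsto (1-k')/(1+k')$. Once these identifications are in place, the first three entries of each output tuple are forced by classical identities for $K$, $K'$ and the nome $q=e^{-\pi K'/K}$: in (i), $K$ and $K'$ swap, giving $y\mapsto \pi^2/y$ and $z\mapsto yz/\pi$; in (ii), $K(ik/k')=k'K$ and $K'(ik/k')/K(ik/k')=K'/K-i$ give $y\mapsto y-i\pi$, $q\mapsto -q$, $z\mapsto z\sqrt{1-x}$; in (iii), the Gauss/Landen identity $K\bigl((1-k')/(1+k')\bigr)=(1+k')K/2$ together with the doubling of $K'/K$ gives $y\mapsto 2y$ (so $q\mapsto q^2$) and $z\mapsto (1+\sqrt{1-x})z/2$.

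The fourth entry in each case is the derivative of the new $z$ with respect to the new $x$, obtained from the chain rule $z'_{\mathrm{new}}=(dz_{\mathrm{new}}/dx)/(dx_{\mathrm{new}}/dx)$. The one non-trivial input is
\[
\frac{dy}{dx}=-\frac{1}{x(1-x)z^2},
\]
which follows from Legendre's relation $EK'+E'K-KK'=\pi/2$ combined with the standard derivatives $dK/dx=(E-(1-x)K)/(2x(1-x))$ and $dK'/dx=(xK'-E')/(2x(1-x))$. Given this, the fourth entry of \eqref{lem-for-one} follows from $d(yz/\pi)/d(1-x)=-(y'z+yz')/\pi$, which produces exactly $\tfrac{1}{\pi}\bigl(\tfrac{1}{x(1-x)z}-yz'\bigr)$. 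The remaining two fourth entries do not involve $y$ on the right, so they arise simply by differentiating $z\sqrt{1-x}$ or $(1+\sqrt{1-x})z/2$ and dividing by $dx_{\mathrm{new}}/dx=-1/(x-1)^2$ (in (ii)) or the analogous Landen derivative (in (iii)).

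The main obstacle will be the algebraic bookkeeping in \eqref{lem-for-trans-3}: several fractional powers of $1-x$ must combine to produce exactly the factor $(1-x+\sqrt{1-x})z'-z/2$ together with the prefactor $(1+\sqrt{1-x})^3/(4(1-\sqrt{1-x}))$; in particular, tracking how the single factor $\sqrt{1-x}$ arising from $d\sqrt{1-x}/dx$ combines with the rational function $dx_{\mathrm{new}}/dx$ is the trickiest point. Beyond that the verification is routine calculus, with no new ingredients beyond the classical transformation theory of the complete elliptic integral of the first kind.
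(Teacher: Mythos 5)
Your proposal is correct, and the computations check out: each of the three output tuples is exactly the Ramanujan data $(X,e^{-Y},Z,Z')$ attached to the new modulus $k'$, $ik/k'$, and $(1-k')/(1+k')$ respectively, and the fourth entries follow from the chain rule $z'_{\mathrm{new}}=(dz_{\mathrm{new}}/dx)\big/(dx_{\mathrm{new}}/dx)$ together with $dy/dx=-1/(x(1-x)z^2)$, which is Legendre's relation in disguise (the paper quotes the equivalent form $dx/dy=-x(1-x)z^2$ from Berndt's book rather than rederiving it). Note that the paper itself gives no proof of this lemma --- it is imported verbatim from Xu--Zhao --- so there is nothing to compare against beyond observing that your route (complementary modulus, Jacobi's imaginary transformation $\tau\mapsto\tau+1$ giving $q\mapsto -q$ and $x\mapsto x/(x-1)$, and the descending Landen transformation giving $q\mapsto q^2$) is precisely the classical derivation underlying the cited source. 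I verified the trickiest piece, the fourth entry of \eqref{lem-for-trans-3}: with $k'=\sqrt{1-x}$ one finds $dx_{\mathrm{new}}/dx=2(1-k')/(k'(1+k')^3)$ and $dz_{\mathrm{new}}/dx=(1+k')z'/2-z/(4k')$, whose quotient is $\frac{(1+k')^3}{4(1-k')}\left((k'+k'^2)z'-\frac{z}{2}\right)$, matching the stated formula since $k'+k'^2=1-x+\sqrt{1-x}$. The only point worth flagging explicitly in a write-up is that the second transformation sends $x$ to a negative value and the nome to $-q$, so the identity there is to be understood by analytic continuation, as is standard in this context.
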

		\begin{lem} \emph{(\cite{XZ2023})} \label{ExpandS-C}
			Let $n\in \mathbb{Z}$. We have
	\begin{align}
			\frac{\left( -1 \right) ^n}{\sin \left( \frac{1\pm i}{2}z \right)}\overset{z\rightarrow \left( 1\mp i \right) n\pi}{=}\frac{1\mp i}{z-\left( 1\mp i \right) n\pi}+2\sum_{k=1}^{\infty}{\frac{\overline{\zeta }\left( 2k \right)}{\pi ^{2k}}\left( \frac{1\pm i}{2} \right) ^{2k-1}\left( z-\left( 1\mp i \right) n\pi \right) ^{2k-1}},
			\\
			\frac{\left( -1 \right) ^{n-1}}{\cos \left( \frac{1\pm i}{2}z \right)}\overset{z\rightarrow \left( 1 \mp i \right) \tilde{n}\pi}{=}\frac{1\mp i}{z-\left( 1\mp i \right) \tilde{n}\pi}-2\sum_{k=1}^{\infty}{\frac{\overline{\zeta }\left( 2k \right)}{\pi ^{2k}}\left( \frac{1\pm i}{2} \right) ^{2k-1}\left( z-\left( 1\mp i \right) \tilde{n}\pi \right) ^{2k-1}},
			\end{align}
			where $\tilde{n}:=n-\frac{1}{2}$. Here  $\overline{\zeta }(s)$ stands for the alternating Riemann zeta function, which is defined by
			$$
	 \bar{\zeta}\left( s \right) :=\sum_{n=1}^{\infty}{\frac{\left( -1 \right) ^{n-1}}{n^s}}\quad \left( \Re \left( s \right) >0 \right).
			$$
			\iffalse
			When $s=2m$ is an even, Euler proved the famous formula
$$
			\zeta \left( 2m \right) =\frac{\left( -1 \right) ^{m-1}B_{2m}\left( 2\pi \right) ^{2m}}{2\left( 2m \right) !},
			$$
			where $B_{2m}\in \Q$ are Bernoulli numbers defined by the generating function$$
			\frac{x}{e^x-1}=\sum_{n=0}^{\infty}{\frac{B_n}{n!}x^n.}
			$$
			\fi
				\end{lem}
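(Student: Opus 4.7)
The plan is to reduce both expansions to a single Laurent series of $1/\sin u$ about $u=0$ and then carry out a translation of the argument.

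\textbf{Step 1: Laurent expansion of $1/\sin u$ at the origin.} I would start from the Mittag--Leffler partial fraction decomposition
\[
\frac{1}{\sin u}=\frac{1}{u}-2u\sum_{n=1}^{\infty}\frac{(-1)^n}{n^2\pi^2-u^2},
\]
which is valid for $|u|<\pi$. Expanding each summand as a geometric series in $u^2/(n^2\pi^2)$ and interchanging the order of summation (justified by absolute convergence for $|u|<\pi$), then grouping powers of $u$, recognizes the alternating sums $\sum_{n\ge 1}(-1)^{n-1}/n^{2k}=\overline{\zeta}(2k)$ and gives
\[
\frac{1}{\sin u}=\frac{1}{u}+2\sum_{k=1}^{\infty}\frac{\overline{\zeta}(2k)}{\pi^{2k}}\,u^{2k-1}.
\]

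\textbf{Step 2: Sine case.} Set $\alpha_{\pm}:=(1\pm i)/2$ and note the crucial identity $\alpha_{\pm}(1\mp i)=1$, equivalently $1/\alpha_{\pm}=1\mp i$. Put $w:=z-(1\mp i)n\pi$, so $\alpha_{\pm}z=\alpha_{\pm}w+n\pi$. Then $\sin(\alpha_{\pm}z)=(-1)^{n}\sin(\alpha_{\pm}w)$, hence
\[
\frac{(-1)^{n}}{\sin(\alpha_{\pm}z)}=\frac{1}{\sin(\alpha_{\pm}w)}.
\]
Substituting $u=\alpha_{\pm}w$ into the series of Step~1 and using $1/\alpha_{\pm}=1\mp i$ reproduces the first claimed expansion verbatim.

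\textbf{Step 3: Cosine case.} Put $w:=z-(1\mp i)\tilde n\pi$, so $\alpha_{\pm}z=\alpha_{\pm}w+\tilde n\pi$ where $\tilde n$ is a half-odd integer. The shift formula for cosine reduces $\cos(\alpha_{\pm}w+\tilde n\pi)$ to a signed sine: $\cos(\alpha_{\pm}w+\tilde n\pi)=-\sin(\tilde n\pi)\sin(\alpha_{\pm}w)$, which evaluates to $\pm\sin(\alpha_{\pm}w)$ according to the parity of $n$. Folding the resulting global sign into the prefactor $(-1)^{n-1}$ and inserting the series of Step~1 yields the second expansion, where the minus sign in front of the sum is exactly the opposite sign picked up from this cosine-to-sine reduction (as compared with the sine case).

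\textbf{Main obstacle.} The substance is a single geometric-series manipulation; what really needs care is the bookkeeping of the three interacting sign data: the phase $(1\pm i)$, the integer sign $(-1)^n$, and the half-integer shift in the cosine case. The algebraic identity $(1\pm i)(1\mp i)=2$ is what makes the translation land exactly at integer (respectively half-integer) multiples of $\pi$, and the fact that the cosecant expansion contains only odd powers of $u$ is what forces the factors $((1\pm i)/2)^{2k-1}$ to appear in the form stated, rather than involving $2k$-th powers.
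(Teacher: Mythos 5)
The paper does not actually prove this lemma (it is quoted from \cite{XZ2023}), so there is no in-paper argument to compare against; your reduction to the cosecant Laurent series at the origin is the natural route. Steps 1 and 2 are correct: the partial-fraction expansion, the geometric-series rearrangement giving $1/\sin u=u^{-1}+2\sum_{k\ge1}\overline{\zeta}(2k)\pi^{-2k}u^{2k-1}$, and the translation $\sin(\alpha_{\pm}z)=(-1)^n\sin(\alpha_{\pm}w)$ together with $1/\alpha_{\pm}=1\mp i$ do yield the first identity exactly as stated.

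Step 3, however, contains a real error, not just a bookkeeping subtlety. Your reduction gives $\cos(\alpha_{\pm}w+\tilde n\pi)=-\sin(\tilde n\pi)\sin(\alpha_{\pm}w)=(-1)^{n}\sin(\alpha_{\pm}w)$, hence
\[
\frac{(-1)^{n-1}}{\cos(\alpha_{\pm}z)}=-\frac{1}{\sin(\alpha_{\pm}w)},
\]
i.e.\ a single overall sign multiplying the \emph{entire} cosecant series. Consequently the pole term and the power-series tail must carry the same sign, and the mixed pattern of the stated second identity (plus on the pole term, minus on the sum) cannot be produced by this --- or any --- argument. Your final sentence, which folds the global sign only into the sum while leaving the pole term positive, is precisely where the proof breaks. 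What your computation actually establishes is
\[
\frac{(-1)^{n}}{\cos\left(\tfrac{1\pm i}{2}z\right)}=\frac{1\mp i}{z-(1\mp i)\tilde n\pi}+2\sum_{k=1}^{\infty}\frac{\overline{\zeta}(2k)}{\pi^{2k}}\left(\frac{1\pm i}{2}\right)^{2k-1}\left(z-(1\mp i)\tilde n\pi\right)^{2k-1},
\]
equivalently the stated left-hand side equals the right-hand side with \emph{both} signs reversed. A direct check at $n=1$ in the real specialization, $\sec z=-\left(z-\tfrac{\pi}{2}\right)^{-1}-\tfrac{1}{6}\left(z-\tfrac{\pi}{2}\right)-\cdots$, confirms this. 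So the second displayed formula of the lemma as printed carries a sign misprint (inherited from the cited source); a correct execution of your own method would have detected it at exactly this step rather than asserting that the claimed signs follow.
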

	\begin{lem}\emph{(\cite[Lem. 1]{XuZhao-2024})}\label{lem-2,C,S}
		Let $p$ be a positive integer and $
		\alpha ,\beta
		$ be real numbers such as $\alpha \beta=\pi^2$
\begin{align}
&\alpha ^{2p+1}\bar{C}_{2p,2}\left( \alpha \right) -\left( -1 \right) ^pp\frac{\pi ^{2p}}{2^{2p-2}}S'_{2p-1,1}\left( \beta \right)
+\left( -1 \right) ^p\frac{\pi ^{2p}}{2^{2p}}\beta
\mathrm{DS}'_{2p,2}\left( \beta \right)
=0,\\
&\alpha ^{2p}\bar{S}_{2p,2}\left( \alpha \right) -2p\left( -1 \right) ^{p-1}\pi ^{2p-2}\beta S_{2p-1,1}\left( \beta \right)
-\left( -1 \right) ^p\pi ^{2p-2}\beta ^2\mathrm{DS}_{2p,2}\left( \beta \right) -\delta _p=0,
	\end{align}
where $\delta_1=\frac{1}{2}$ and $\delta_p=0$ if $p\ge 2$.
	\end{lem}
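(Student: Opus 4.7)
The plan is to establish both identities by the classical residue method in the spirit of Ramanujan and Berndt. For the second identity I would integrate the meromorphic function
\begin{equation*}
f_{2}(z)=\frac{\pi z^{2p}}{\sin(\pi z)\sinh^{2}(\alpha z)}
\end{equation*}
around the square contour $C_{N}$ with vertical sides $\Re z=\pm(N+\tfrac{1}{2})$ and horizontal sides $\Im z=\pm N\beta/\pi$, chosen to stay midway between two consecutive zeros of $\sinh(\alpha z)$. A routine estimate shows that $|\sin(\pi z)\sinh^{2}(\alpha z)|^{-1}$ decays exponentially on $C_{N}$, so that $\oint_{C_{N}}f_{2}\,dz\to 0$ as $N\to\infty$ and consequently the sum of all residues of $f_{2}$ vanishes.

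Three families of residues contribute. At the simple poles $z=n\in\Z\setminus\{0\}$ of $1/\sin(\pi z)$ the residue equals $(-1)^{n}n^{2p}/\sinh^{2}(n\alpha)$, and summation over $n\ne 0$ yields $-2\bar{S}_{2p,2}(\alpha)$. At the double poles $z_{n}=in\pi/\alpha=in\beta/\pi$ of $1/\sinh^{2}(\alpha z)$ one uses $\sin(\pi z_{n})=i\sinh(n\beta)$, $\cos(\pi z_{n})=\cosh(n\beta)$ and the expansion $\sinh^{2}(\alpha z)=\alpha^{2}(z-z_{n})^{2}+O((z-z_{n})^{4})$ in the standard formula $\mathrm{Res}_{z_{n}}f_{2}=\lim_{z\to z_{n}}\frac{d}{dz}[(z-z_{n})^{2}f_{2}(z)]$; the derivative produces a term in $n^{2p-1}/\sinh(n\beta)$ (from differentiating the factor $z^{2p}$) and one in $n^{2p}\cosh(n\beta)/\sinh^{2}(n\beta)$ (from differentiating the factor $1/\sin(\pi z)$), which, after symmetrisation in $n$, become $S_{2p-1,1}(\beta)$ and $\mathrm{DS}_{2p,2}(\beta)$. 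Since $f_{2}(z)\sim z^{2p-3}/\alpha^{2}$ near the origin, $f_{2}$ is regular at $z=0$ for $p\ge 2$ but has residue $1/\alpha^{2}$ when $p=1$; this precisely accounts for the $\delta_{p}$ term. Multiplying the residue sum by $-\alpha^{2p}/2$ and simplifying $\alpha^{2p-2}\beta^{2p-1}=\pi^{2p-2}\beta$ and $\alpha^{2p-2}\beta^{2p}=\pi^{2p-2}\beta^{2}$ (using $\alpha\beta=\pi^{2}$) delivers the second identity with the correct signs.

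The first identity would follow from the same argument applied to
\begin{equation*}
f_{1}(z)=\frac{\pi z^{2p}}{\sin(\pi z)\cosh^{2}(\alpha z)}.
\end{equation*}
Now the double poles of the denominator lie at the half-integer translates $z_{k}=(2k-1)i\pi/(2\alpha)=(2k-1)i\beta/(2\pi)$, and the analogous residue computation produces the odd-indexed sums $S'_{2p-1,1}(\beta)$ and $\mathrm{DS}'_{2p,2}(\beta)$ with the prefactors $\pi^{2p}/2^{2p-2}$ and $\pi^{2p}/2^{2p}$ arising from the powers of $(2\pi)$ in $z_{k}^{2p-1}$ and $z_{k}^{2p}$. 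Because $f_{1}$ is regular at the origin for every $p\ge 1$, no $\delta_{p}$ correction appears here. The hardest part will be the careful bookkeeping of the double-pole residues: one has to track the powers of $i$ that control the signs $(-1)^{p}$ and $(-1)^{p-1}$, the minus sign from $(i(-1)^{k-1})^{2}=-1$ appearing in $\cosh^{2}(\alpha z)$ near its zeros, the symmetrisation of $\sum_{n\ne 0}$ (respectively $\sum_{k\in\Z}$) to $2\sum_{n\ge 1}$ (respectively $2\sum_{k\ge 1}$) via the parity of the summands, and the reduction of mixed $\alpha$- and $\beta$-powers to $\pi$-powers through $\alpha\beta=\pi^{2}$. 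The contour estimate itself is standard provided the horizontal sides are placed midway between consecutive poles of $\cosh(\alpha z)$ or $\sinh(\alpha z)$ so that the exponential decay dominates the polynomial factor $z^{2p}$.
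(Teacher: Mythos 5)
The paper does not actually prove this lemma; it is quoted verbatim from \cite[Lem.~1]{XuZhao-2024}, so there is no in-text argument to compare against. Your residue proposal is a correct, self-contained derivation, and it is essentially the standard kernel method used for identities of this Ramanujan type. I checked the bookkeeping: for $f_2(z)=\pi z^{2p}/(\sin(\pi z)\sinh^2(\alpha z))$ the integer poles give $-2\bar S_{2p,2}(\alpha)$, the double poles at $z_n=in\beta/\pi$ give $\tfrac{2}{\alpha^2\pi^{2p-2}}\bigl[2p(-1)^{p-1}\beta^{2p-1}S_{2p-1,1}(\beta)+(-1)^p\beta^{2p}\mathrm{DS}_{2p,2}(\beta)\bigr]$, the origin contributes $1/\alpha^2$ exactly when $p=1$, and multiplying the vanishing residue sum by $-\alpha^{2p}/2$ reproduces the second identity including $\delta_p$; the parallel computation for $f_1$ with the extra sign from $\cosh^2(\alpha z)=-\sinh^2(\alpha(z-w_k))$ near $w_k=i(2k-1)\beta/(2\pi)$ yields the first identity with the stated prefactors $\pi^{2p}/2^{2p-2}$ and $\pi^{2p}/2^{2p}$. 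Two cosmetic slips you should fix when writing this up: the horizontal sides of $C_N$ must sit at $\Im z=\pm(N+\tfrac12)\beta/\pi$, not $\pm N\beta/\pi$ (the latter passes through the pole $z_{\pm N}$ — your stated intent of staying midway between zeros is right, the formula is not); and the simplification should read $\alpha^{2p-2}\beta^{2p-1}/\pi^{2p-2}=\pi^{2p-2}\beta$ rather than $\alpha^{2p-2}\beta^{2p-1}=\pi^{2p-2}\beta$, since $(\alpha\beta)^{2p-2}=\pi^{4p-4}$ (the missing $\pi^{2p-2}$ in the denominator comes from $z_n^{2p-1}=(in\beta/\pi)^{2p-1}$, and with it the final coefficients are exactly those in the lemma). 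Neither slip affects the validity of the method.
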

	\begin{lem}\emph{(\cite[Thm. 2]{A1976})}\label{sn cofficient symmetry}
	The Maclaurin series of ${\rm sn}(u)$ has the form
	\begin{align}\label{sd-exped-one}
		\mathrm{sn}\left( u \right) =\sum_{n=0}^{\infty}{\mathrm{q}_{2n+1}\left( x \right) \frac{\left( -1 \right) ^nu^{2n+1}}{\left( 2n+1 \right) !}},
	\end{align}	
	where $\text{q}_{2n+1}(x)=\sum^{n}_{j=0}a_jx^j$ and $a_j=a_{n-j}\ (j=0,\ldots,n$).
		\end{lem}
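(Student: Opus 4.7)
The plan is to split the statement into two independent claims---the shape of the Maclaurin series and the palindromic symmetry of the coefficient polynomial $q_{2n+1}$---and prove them separately, with the classical reciprocal-modulus transformation of $\mathrm{sn}$ as the one substantive input.

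For the shape, I would first observe that $\mathrm{sn}(u,k)$ is odd in $u$ (its defining integrand is even), so only odd powers of $u$ can occur in the Maclaurin expansion. Applying Lemma \ref{Taylor sn} at $u=0$ (where $\sin\varphi=0$ and $\cos\varphi=1$) then shows that every Taylor coefficient is a polynomial in $k^{2}=x$ with integer coefficients. To bound the $x$-degree of the coefficient of $u^{2n+1}$ by $n$, I would feed the Jacobi ODE $\mathrm{sn}''(u)=-(1+k^{2})\mathrm{sn}(u)+2k^{2}\mathrm{sn}^{3}(u)$ into a recursion on the Taylor coefficients and induct on $n$: the $(1+k^{2})\mathrm{sn}(u)$ term contributes degree at most $n+1$ to the coefficient of $u^{2n+3}$, while $2k^{2}\mathrm{sn}^{3}(u)$ contributes at most $2+3(n-1)/3=n+1$, so the degree bound propagates, with the base cases $q_{1}(x)=1$ and $q_{3}(x)=1+x$ checked directly. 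This establishes the representation $q_{2n+1}(x)=\sum_{j=0}^{n}a_{j}x^{j}$.

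For the palindromic symmetry, the heart of the matter is the reciprocal-modulus identity
\begin{align*}
\mathrm{sn}(u,k)=\tfrac{1}{k}\,\mathrm{sn}(ku,1/k),
\end{align*}
which I would derive directly from the defining integral: substituting $\sin t=k\sin\tau$ in the equation $\int_{0}^{\psi}\mathrm{d}t/\sqrt{1-k^{-2}\sin^{2}t}=ku$ (which defines $\psi=\mathrm{am}(ku,1/k)$) turns it into $k\int_{0}^{\tau(\psi)}\mathrm{d}\tau/\sqrt{1-k^{2}\sin^{2}\tau}=ku$, whence $\tau(\psi)=\mathrm{am}(u,k)=\varphi$ and $\sin\psi=k\sin\varphi=k\,\mathrm{sn}(u,k)$. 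Inserting the provisional series form into both sides of the identity and matching coefficients of $u^{2n+1}$ yields
\begin{align*}
q_{2n+1}(x)=x^{n}q_{2n+1}(1/x),
\end{align*}
and expanding this as $\sum_{j=0}^{n}a_{j}x^{j}=\sum_{j=0}^{n}a_{j}x^{n-j}$ is exactly the asserted symmetry $a_{j}=a_{n-j}$.

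The only delicate point is to promote the reciprocal-modulus identity to a formal power-series identity in $u$ with coefficients in $\Q(k)$; the substitution argument only proves it for real $0<k<1$, but the extension is painless because both sides satisfy the same first-order ODE $(\mathrm{sn}')^{2}=(1-\mathrm{sn}^{2})(1-k^{2}\mathrm{sn}^{2})$ in $u$ with matching initial data at $u=0$, so they agree as formal series in $u$ for arbitrary $k$. Once this input is secured, the remaining coefficient comparison is entirely mechanical.
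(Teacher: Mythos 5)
Your proposal is correct. Note that the paper does not prove this lemma at all --- it is imported verbatim as Theorem 2 of Schett's 1976 paper \cite{A1976} --- so there is no internal proof to compare against; what you have written is a self-contained derivation of the cited result. Your two-step decomposition is the natural one: the shape of the series (odd powers of $u$, coefficients in $\Z[x]$ of degree at most $n$) follows from oddness, Lemma \ref{Taylor sn}, and the recursion induced by $\mathrm{sn}''=-(1+k^2)\mathrm{sn}+2k^2\mathrm{sn}^3$, while the palindromy is exactly the reciprocal-modulus identity $\mathrm{sn}(ku,1/k)=k\,\mathrm{sn}(u,k)$ read off coefficientwise as $q_{2n+1}(x)=x^nq_{2n+1}(1/x)$; this is the classical route to the symmetry. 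Two small remarks. First, your degree count ``$2+3(n-1)/3=n+1$'' for the cubic term is garbled: in the variable $x=k^2$ the factor $k^2$ contributes degree $1$ and the coefficient of $u^{2n+1}$ in $\mathrm{sn}^3$ is $\sum_{i+j+l=n-1}c_ic_jc_l$ of degree at most $n-1$, so that term actually contributes degree at most $n$, comfortably within the required bound $n+1$ coming from the $(1+x)c_n$ term; the induction closes either way. Second, the ``delicate point'' you flag about promoting the identity to formal power series is a non-issue: once $q_{2n+1}(x)=x^nq_{2n+1}(1/x)$ holds for all $x\in(0,1)$, both sides are polynomials (the right side because $\deg q_{2n+1}\le n$), so they agree identically; the ODE argument is valid but unnecessary. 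One should also record $a_0=q_{2n+1}(0)=1$ (from $\mathrm{sn}(u,0)=\sin u$), which via the symmetry gives $a_n=1$ and confirms that $q_{2n+1}$ has degree exactly $n$, as the statement implicitly requires.
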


	\begin{lem}\label{either an integer or an integer}
 For any positive even integer $k$, we have
 \begin{align}\label{even integer}
 \left( \sqrt{2}+1 \right) ^k+\left( \sqrt{2}-1 \right) ^k\in \mathbb{Z} ,
 \quad 
 \sqrt{2}\left[ \left( \sqrt{2}+1 \right) ^k-\left( \sqrt{2}-1 \right) ^k \right] \in \mathbb{Z} ,
\end{align}
 and  for any positive odd integer $k$, we have 
  \begin{align}\label{odd integer}
 	\left( \sqrt{2}+1 \right) ^k-\left( \sqrt{2}-1 \right) ^k\in \mathbb{Z} ,
  \quad 
 	\sqrt{2}\left[ \left( \sqrt{2}+1 \right) ^k+\left( \sqrt{2}-1 \right) ^k \right] \in \mathbb{Z} .
 \end{align}
\end{lem}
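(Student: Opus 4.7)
The plan is to prove all four assertions by a single binomial expansion combined with a parity argument, which handles the four cases uniformly. I would first apply the binomial theorem to each of $(\sqrt{2}+1)^k$ and $(\sqrt{2}-1)^k$ separately and then add or subtract to obtain
\begin{align*}
(\sqrt{2}+1)^k \pm (\sqrt{2}-1)^k \;=\; \sum_{i=0}^{k} \binom{k}{i} (\sqrt{2})^{\,i}\bigl[\,1 \pm (-1)^{k-i}\,\bigr].
\end{align*}
In the $+$ combination the bracket collapses to $2$ precisely when $i\equiv k \pmod 2$, and in the $-$ combination precisely when $i\not\equiv k\pmod 2$; all other terms cancel.

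Next I would split on the parity of $k$. If $k$ is even, then in the $+$ combination every surviving index $i$ is even, so $(\sqrt{2})^{i}=2^{i/2}\in\mathbb{Z}$, and the whole sum is a rational integer; this is the first claim of \eqref{even integer}. In the $-$ combination every surviving index $i$ is odd, so $(\sqrt{2})^{i}=\sqrt{2}\cdot 2^{(i-1)/2}$, which means the sum equals $\sqrt{2}$ times a rational integer, and multiplying by $\sqrt{2}$ gives the second claim of \eqref{even integer}. When $k$ is odd the parities of the surviving $i$ swap between the two combinations, so exactly the same argument yields both halves of \eqref{odd integer}.

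The main obstacle here is essentially nonexistent: everything reduces to bookkeeping of parities and elementary manipulation of integer powers of $\sqrt{2}$, with no convergence, analytic, or combinatorial subtlety. For completeness I would note the alternative route through the recurrence $u_{m+1}=6u_{m}-u_{m-1}$ satisfied by $(3+2\sqrt{2})^{m}\pm(3-2\sqrt{2})^{m}$, which comes from $(\sqrt{2}\pm 1)^{2}=3\pm 2\sqrt{2}$ and $(\sqrt{2}+1)(\sqrt{2}-1)=1$, but since that path forces a separate treatment of even and odd $k$, I would prefer the binomial derivation above, which disposes of all four statements in one stroke.
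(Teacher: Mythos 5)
Your proposal is correct and follows essentially the same route as the paper, which simply invokes the binomial theorem; you have merely filled in the parity bookkeeping that the paper leaves implicit.
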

\begin{proof}
	Expanding the expressions in \eqref{even integer} and \eqref{odd integer} via the binomial theorem proves the lemma.
\end{proof}

\section{Mixed Berndt-Type Integrals via Hyperbolic Series}
In this section, we  establish some explicit relations between  mixed Berndt-type integrals and some hyperbolic summations  using the contour integrations.
\begin{thm}\label{thm1cosh++}
For any  $a\ge 1$, we have
\begin{align}\label{onecosh++}
\int_0^{\infty}{\frac{x^a\left( \sinh x-i\sin x \right) \mathrm{d}x}{\left[ \sinh ^2x+\sin ^2x \right] \left[ \cosh x+\cos x \right]}}&-i^a\int_0^{\infty}{\frac{x^a\left( \sin x-i\sinh x \right) \mathrm{d}x}{\left[ \sinh ^2x+\sin ^2x \right] \left[ \cosh x+\cos x \right]}}
\nonumber\\
&=\left( \frac{1+i}{2}\pi \right) ^{a+1}\sum_{n=1}^{\infty}{\frac{\left( -1 \right) ^nn^a}{\cosh ^2\left[ \tfrac{n\pi}{2} \right]}}.
\end{align}	
\end{thm}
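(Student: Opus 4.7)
The plan is to combine the two real integrals into a contour integral over the boundary of the first quadrant and evaluate it by residues. First I would simplify the integrands: the factorization $\sinh^2 z+\sin^2 z=(\sinh z+i\sin z)(\sinh z-i\sin z)$ reduces the first integrand to $\frac{x^a}{(\sinh x+i\sin x)(\cosh x+\cos x)}$, while the identity $\sin x-i\sinh x=-i(\sinh x+i\sin x)$ reduces the second to $\frac{-ix^a}{(\sinh x-i\sin x)(\cosh x+\cos x)}$. I would then define
\[
f(z):=\frac{z^a}{(\sinh z+i\sin z)(\cosh z+\cos z)},
\]
and use $\sinh(iy)=i\sin y$, $\sin(iy)=i\sinh y$, $\cosh(iy)=\cos y$, $\cos(iy)=\cosh y$ to verify the key symmetry $f(iy)=-i^a\cdot\frac{y^a}{(\sinh y-i\sin y)(\cosh y+\cos y)}$. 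Combining these, the left-hand side of \eqref{onecosh++} rearranges into the single expression $\int_0^\infty f(x)\,dx-\int_0^{i\infty}f(z)\,dz$.

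Next I would apply Cauchy's residue theorem on the quarter-circle contour $[0,R]\cup\{Re^{i\theta}:\theta\in[0,\pi/2]\}\cup[iR,0]$ (with $z^a$ taken via the principal branch of the logarithm). For the residues I would exploit the product-to-sum identities
\[
\cosh z+\cos z=2\cosh\tfrac{(1+i)z}{2}\cosh\tfrac{(1-i)z}{2},\quad \sinh z+i\sin z=2\sinh\tfrac{(1+i)z}{2}\cosh\tfrac{(1-i)z}{2},
\]
which together give
\[
f(z)=\frac{z^a}{2\sinh((1+i)z)\,\cosh^2((1-i)z/2)}.
\]
In the open first quadrant the only singularities are the simple zeros $z_n=\frac{(1+i)n\pi}{2}$, $n\geq 1$, of $\sinh((1+i)z)$; the zeros of $\cosh((1-i)z/2)$ lie along the ray in direction $\pm(1-i)$, which misses the first quadrant, and $z=0$ is regular because $a\geq 1$. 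At $z_n$ one has $(1+i)z_n=in\pi$ and $(1-i)z_n/2=n\pi/2$, so
\[
\mathrm{Res}_{z=z_n}f(z)=\frac{z_n^a}{2(1+i)(-1)^n\cosh^2(n\pi/2)}=\frac{(1+i)^{a-1}(-1)^n n^a\pi^a}{2^{a+1}\cosh^2(n\pi/2)}.
\]
Summing over $n\geq 1$, multiplying by $2\pi i$, and using $(1+i)^2=2i$ (so that $i(1+i)^{a-1}=(1+i)^{a+1}/2$) consolidates the prefactor into $\bigl(\tfrac{(1+i)\pi}{2}\bigr)^{a+1}$, exactly reproducing the right-hand side of \eqref{onecosh++}.

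The main obstacle I anticipate is verifying that the quarter-arc integral vanishes as $R\to\infty$. Writing $(1+i)z=\sqrt{2}Re^{i(\theta+\pi/4)}$ and $(1-i)z/2=\frac{R}{\sqrt{2}}e^{i(\theta-\pi/4)}$ on the arc, one checks that $\cos(\theta+\pi/4)$ and $\cos(\theta-\pi/4)$ are never simultaneously non-positive on $[0,\pi/2]$; hence at least one of $|\sinh((1+i)z)|$, $|\cosh((1-i)z/2)|$ grows like $e^{cR}$ with $c>0$. Taking $R=R_N=(N+\tfrac12)\pi/\sqrt{2}$ to stay a fixed distance from the pole cluster near $\theta=\pi/4$ should yield a uniform lower bound of exponential order on the denominator, and the arc integral will then be $O(R^{a+1}e^{-cR})\to 0$, closing the argument.
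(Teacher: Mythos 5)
Your proposal is correct and follows essentially the same route as the paper: the same quarter-circle contour in the first quadrant, the same factorization of the denominator (your $2\sinh((1+i)z)\cosh^2((1-i)z/2)$ is the paper's $2i\cos^2(\tfrac{1+i}{2}z)\sin((1-i)z)$ in different notation), the same poles $z_n=\tfrac{(1+i)n\pi}{2}$ with the same residues, and the same identification of the two boundary legs with the two real integrals. Your write-up is in fact more careful than the paper's on the arc estimate (the choice $R_N=(N+\tfrac12)\pi/\sqrt{2}$ to avoid the pole ray) and on the branch bookkeeping for $z^a$.
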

\begin{proof}
Let $z=x+iy$ with $x,y\in \R$. Consider
\begin{align}\label{complex equation,cosh,1/2}
\mathscr{I} _a=\lim_{R\rightarrow \infty} \int_{\mathrm{C}_R}^{}{\frac{z^a\mathrm{d}z}{\left( \sinh z+i\sin z \right) \left( \cosh z+\cos z \right)}}=\lim_{R\rightarrow \infty} \int_{\mathrm{C}_R}^{}{F_1\left( z \right)}\mathrm{d}z,
\end{align}
where $C_R$ denotes the quarter- circular contour consisting of the interval $[0,R]$, the quarter-circle $\Gamma_R$ with $|z|=R,0 \le \arg (z)\le \pi/2$, and $[iR,0]$,  see the following figure:
\begin{center}
\includegraphics[height=2in]{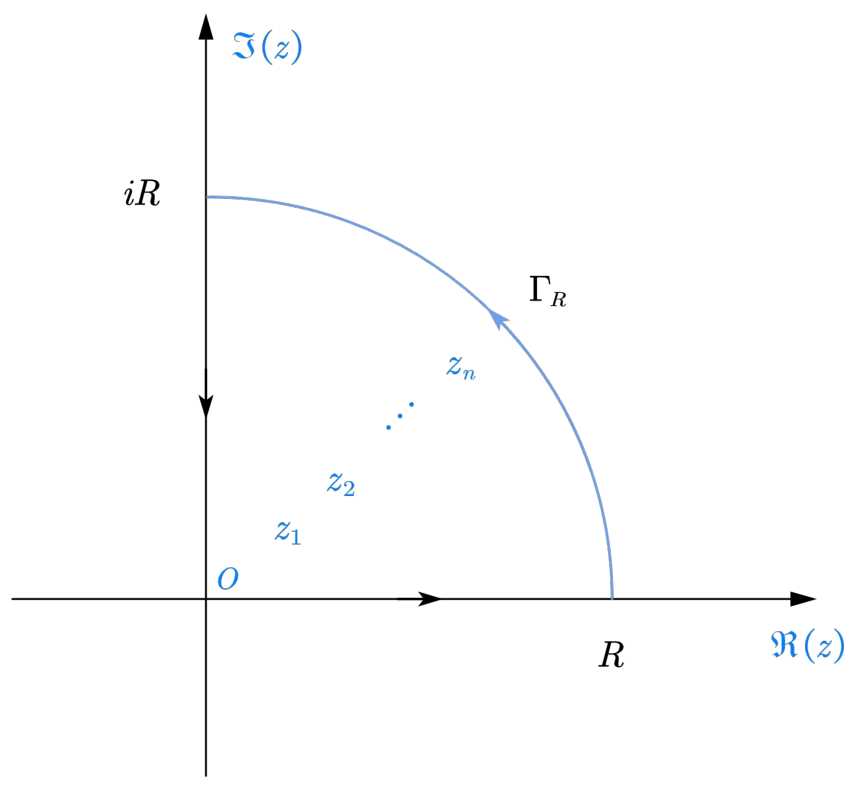}
\end{center}
Obviously, there exist poles when
$$
\left( \sinh z+i\sin z \right) \left( \cosh z+\cos z \right) =2i\cos ^2\left\{ \frac{1+i}{2}z \right\} \sin \left\{ \left( 1-i \right) z \right\} =0.
$$
The only poles lying inside $C_R$ are $z_n=\frac{n(1+i)\pi}{2}$ for $ n\ge 1$ with $ |z_n|< R$. Using Lemma \ref{ExpandS-C}, the residue $\text{Res}[F_1(z),z_n]$ at each such pole is given by
\begin{align}
 & \mathrm{Res}\left[ F\left( z \right) ,z_n \right] =\frac{\left( -1 \right) ^n\left[ \tfrac{1+i}{2}n\pi \right] ^a}{2i\left( 1-i \right) \cos ^2\left[ \left( \tfrac{1+i}{2} \right) ^2n\pi \right]}
\nonumber\\
&=\left( \frac{1+i}{2} \right) ^{a+1}\frac{\pi ^a}{2i}\frac{\left( -1 \right) ^nn^a}{\cos^2 \left[ \tfrac{i}{2}n\pi \right]}=\left( \frac{1+i}{2} \right) ^{a+1}\frac{\pi ^a}{2i}\frac{\left( -1 \right) ^nn^a}{\cosh^2 \left[ \tfrac{n\pi}{2} \right]}.
\end{align}
As $R\rightarrow \infty$, it is easy to show that
$$
\int_{\Gamma _R}^{}{\frac{z^a\mathrm{d}z}{\left( \sinh z+i\sin z \right) \left( \cosh z+\cos z \right)}}=o\left( 1 \right) .
$$
Applying the residue theorem and letting $R\rightarrow \infty$, we conclude that
\begin{align*}
&2\pi i\sum_{n=1}^{\infty}{\mathrm{Res}\left[ F_1\left( z \right) ,z_n \right]}=\lim_{R\rightarrow \infty} \int_{\mathrm{C}_R}^{}{\frac{z^a\mathrm{d}z}{\left( \sinh z+i\sin z \right) \left( \cosh z+\cos z \right)}}
\\
&=\int_0^{\infty}{\frac{x^a\mathrm{d}x}{\left[ \sinh x+i\sin x \right] \left[ \cosh x+\cos x \right]}}+i\int_0^{\infty}{\frac{\left( ix \right) ^a\mathrm{d}x}{\left[ \sinh \left( ix \right) +i\sin \left( ix \right) \right] \left[ \cosh \left( ix \right) +\cos \left( ix \right) \right]}}
\\
&=\int_0^{\infty}{\frac{x^a\left( \sinh x-i\sin x \right) \mathrm{d}x}{\left[ \sinh ^2x+\sin ^2x \right] \left[ \cosh x-\cos x \right]}}-i^a\int_0^{\infty}{\frac{x^a\mathrm{d}x}{\left[ \sin x+i\sinh x \right] \left[ \cosh x+\cos x \right]}}.
\end{align*}
Thus, by combining the related identities, we deduce the desired result.
\end{proof}

\begin{re}
Fix the principal branch of $z^a$ on the sector $0<\arg z<\pi/2$.
For $0<\Re(a)<1$, indent the contour at the origin by the small arc
$\Gamma_\varepsilon=\{\varepsilon e^{it}:0\le t\le\pi/2\}$, see the following figure.
Near $z=0$ one has $F_1(z)=O(|z|^{\Re(a)-1})$, hence
\[
\Bigl|\int_{\Gamma_\varepsilon}F_1(z)\,dz\Bigr|
=O(\varepsilon^{\Re(a)})\to 0, \quad  \text{as} \,\varepsilon\to0.
\]
The large circular arc still gives $o(1)$. Therefore the
left-hand side of \eqref{onecosh++} defines an analytic function of $a$ on $\Re(a)>0$.
The right-hand side is entire in $a$ (uniform convergence). Since the identity holds for  $a\ge1$,
the identity theorem extends \eqref{onecosh++} to the half-plane $\Re(a)>0$.	Likewise, by the same argument, \eqref{onecosh-+} extends to $\Re(a)>0$.
\end{re}

\begin{figure}[htbp]
	\centering
	\includegraphics[height=2.2in]{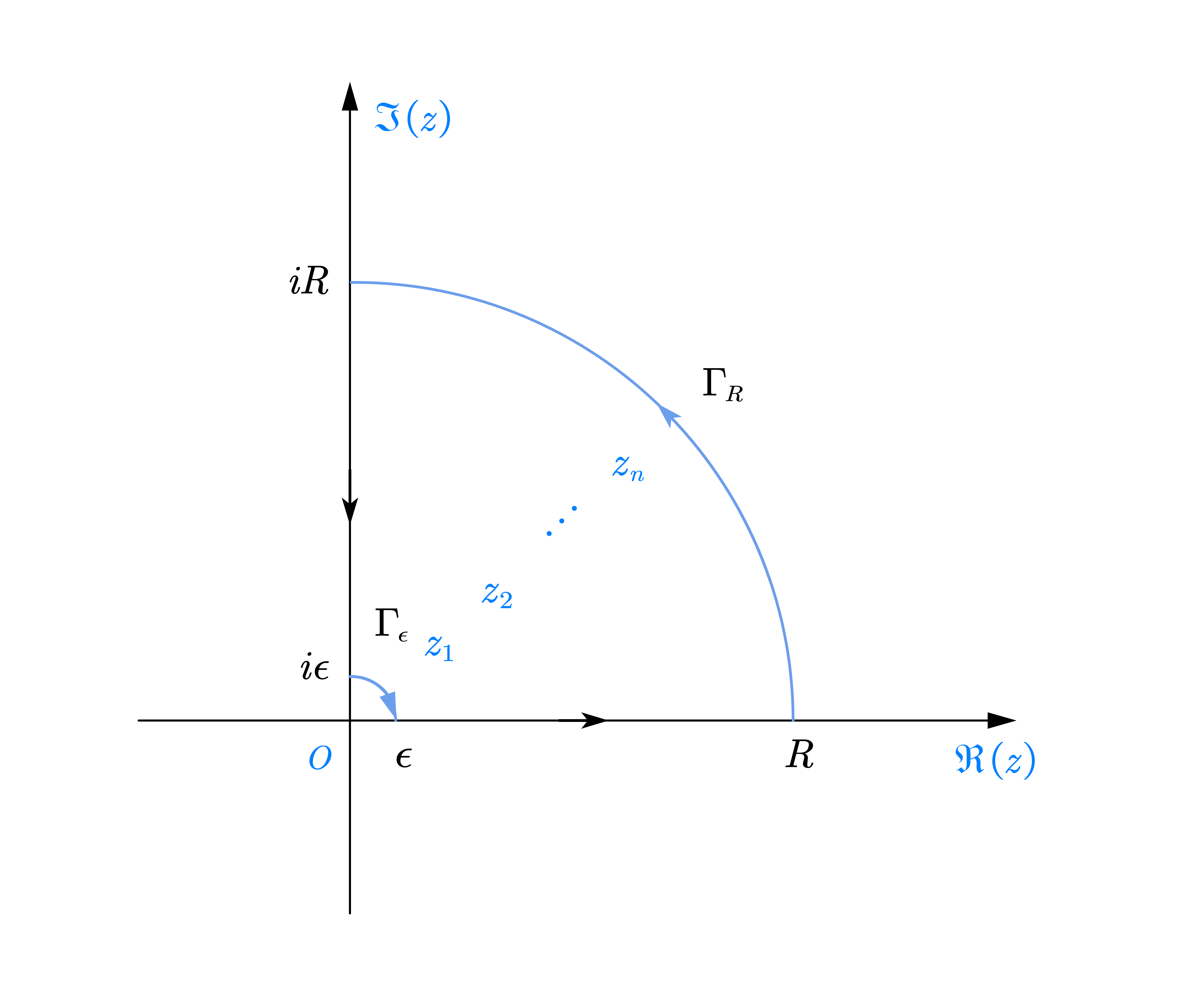} % 你的图文件名
	\label{fig:Q1}
\end{figure}

\begin{thm}\label{Sin+,Cos-}
For any  $a\ge 3$, we have
\begin{align}\label{twoCosh+-}
\int_0^{\infty}{\frac{x^a\left( \sinh x-i\sin x \right) \mathrm{d}x}{\left[ \sinh ^2x+\sin ^2x \right] \left[ \cosh x-\cos x \right]}}+&\left( -i \right) ^a\int_0^{\infty}{\frac{x^a\left( \sin x-i\sinh x \right) \mathrm{d}x}{\left[ \sinh ^2x+\sin ^2x \right] \left[ \cosh x-\cos x \right]}}
\nonumber\\
&
=\left( \frac{1-i}{2}\pi \right) ^{a+1}\sum_{n=1}^{\infty}{\frac{\left( -1 \right) ^nn^a}{\sinh ^2\left( \frac{n\pi}{2} \right)}}.
\end{align}	
\end{thm}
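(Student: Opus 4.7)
The plan is to parallel the proof of Theorem \ref{thm1cosh++}, using the same first-quadrant quarter-circle contour $C_R=[0,R]\cup\Gamma_R\cup[iR,0]$ with the integrand
$$F_2(z):=\frac{z^{a}}{(\sinh z-i\sin z)(\cosh z-\cos z)},$$
and then taking complex conjugates at the end to produce the form stated in Theorem \ref{Sin+,Cos-}. The first step will be to factor the denominator, using $\sinh z=-i\sin(iz)$, $\cosh z=\cos(iz)$ and the sum-to-product identities, as
$$(\sinh z-i\sin z)(\cosh z-\cos z)=-2i\sin^{2}\!\!\left(\tfrac{(1+i)z}{2}\right)\sin((1-i)z).$$
This identifies the poles of $F_2$ inside $C_R$ as the simple zeros $z_n=\tfrac{(1+i)n\pi}{2}$ ($n\ge 1$) of $\sin((1-i)z)$, since at each such $z_n$ we have $\sin((1+i)z_n/2)=i\sinh(n\pi/2)\ne 0$; the other factor's zeros $n\pi(1-i)$ lie in the fourth quadrant, outside $C_R$.

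A direct residue computation will use $\cos((1-i)z_n)=(-1)^n$ and $\sin^{2}((1+i)z_n/2)=-\sinh^{2}(n\pi/2)$ to give
$$\Res[F_2,z_n]=\frac{(-1)^n z_n^{a}}{2i(1-i)\sinh^{2}(n\pi/2)}.$$
Multiplying by $2\pi i$ and collapsing the prefactor via $\tfrac{2\pi i}{2i(1-i)}=\tfrac{(1+i)\pi}{2}$, the residue theorem produces
$$\oint_{C_R}F_2(z)\,dz=\left(\tfrac{(1+i)\pi}{2}\right)^{a+1}\sum_{n=1}^{\infty}\frac{(-1)^n n^a}{\sinh^{2}(n\pi/2)}+o(1).$$
On the arc $\Gamma_R$ the denominator grows exponentially (the minimum of $\max(\cos\theta,\sin\theta)$ on $[0,\pi/2]$ is $1/\sqrt{2}$), so that piece is $o(1)$. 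The threshold $a\ge 3$ is what keeps the integrand bounded at the origin, since $(\sinh x-i\sin x)(\cosh x-\cos x)\sim (1-i)x^{3}$ as $x\to 0$. On the imaginary-axis leg I will substitute $z=iy$ and apply $\sinh(iy)=i\sin y$, $\sin(iy)=i\sinh y$, $\cosh(iy)=\cos y$, which turns the integrand into an $i^{a+1}$-multiple of $y^{a}/[(\sinh y+i\sin y)(\cosh y-\cos y)]$. Combining the three segments gives
\begin{align*}
\int_0^{\infty}\frac{x^{a}\,dx}{(\sinh x-i\sin x)(\cosh x-\cos x)}
&+i^{a+1}\int_0^{\infty}\frac{x^{a}\,dx}{(\sinh x+i\sin x)(\cosh x-\cos x)}\\
&=\left(\tfrac{(1+i)\pi}{2}\right)^{a+1}\sum_{n=1}^{\infty}\frac{(-1)^n n^a}{\sinh^{2}(n\pi/2)}.
\end{align*}

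Finally, I will take complex conjugates of both sides: this swaps $\pm i\sin x$ inside the integrands and replaces $i^{a+1}$ by $(-i)^{a+1}$, while the hyperbolic series on the right is real and unchanged; the coefficient becomes $\big(\tfrac{(1-i)\pi}{2}\big)^{a+1}$. Applying
$$\frac{1}{\sinh x\pm i\sin x}=\frac{\sinh x\mp i\sin x}{\sinh^{2}x+\sin^{2}x},\qquad \sin x-i\sinh x=-i(\sinh x+i\sin x)$$
then restores the numerators and absorbs one factor of $-i$, so that the second integral acquires the prefactor $(-i)^a$ and the identity reads exactly as stated in Theorem \ref{Sin+,Cos-}. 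The main obstacle, as in Theorem \ref{thm1cosh++}, is the careful bookkeeping of the phase factors $(1\pm i)^{k}$ and the powers of $\pm i$ through the factorization, residue computation, and $z=iy$ substitution, so that they combine cleanly into the prescribed coefficient on the right-hand side.
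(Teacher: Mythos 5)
Your proposal is correct and is essentially the paper's own argument in mirror image: the paper applies the residue theorem to $z^a/[(\sinh z+i\sin z)(\cosh z-\cos z)]$ on the fourth-quadrant quarter-disk (simple poles at $z_n=\tfrac{n(1-i)\pi}{2}$) and reads off \eqref{twoCosh+-} directly, whereas you integrate the conjugate integrand over the first-quadrant contour and take complex conjugates at the end --- the two computations coincide under $z\mapsto\bar z$. All the essential steps (the sum-to-product factorization isolating the simple zeros of $\sin((1\mp i)z)$, the residue values via $\sinh^2(n\pi/2)$, the exponential decay on the arc, and the $z=iy$ substitution) agree with the paper's, so this is the same approach.
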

\begin{proof}
We consider
\begin{align}\label{equation,sinh,1/2}
\mathscr{J} _a=\lim_{R\rightarrow \infty} \int_{\mathrm{C}_R}^{}{\frac{z^a\mathrm{d}z}{\left( \sinh z+i\sin z \right) \left( \cosh z-\cos z \right)}}=\lim_{R\rightarrow \infty} \int_{\mathrm{C}_R}^{}{F_2\left( z \right)}\mathrm{d}z,
\end{align}
where $C_R$ denotes the quarter-circular contour consisting of the interval $[0,R]$, the quarter-circle $\Gamma_R$ with $|z|=R\ (-\pi/2 \le \arg (z)\le 0)$, and $[-iR,0]$, see the following figure.
\begin{center}
\includegraphics[height=2in]{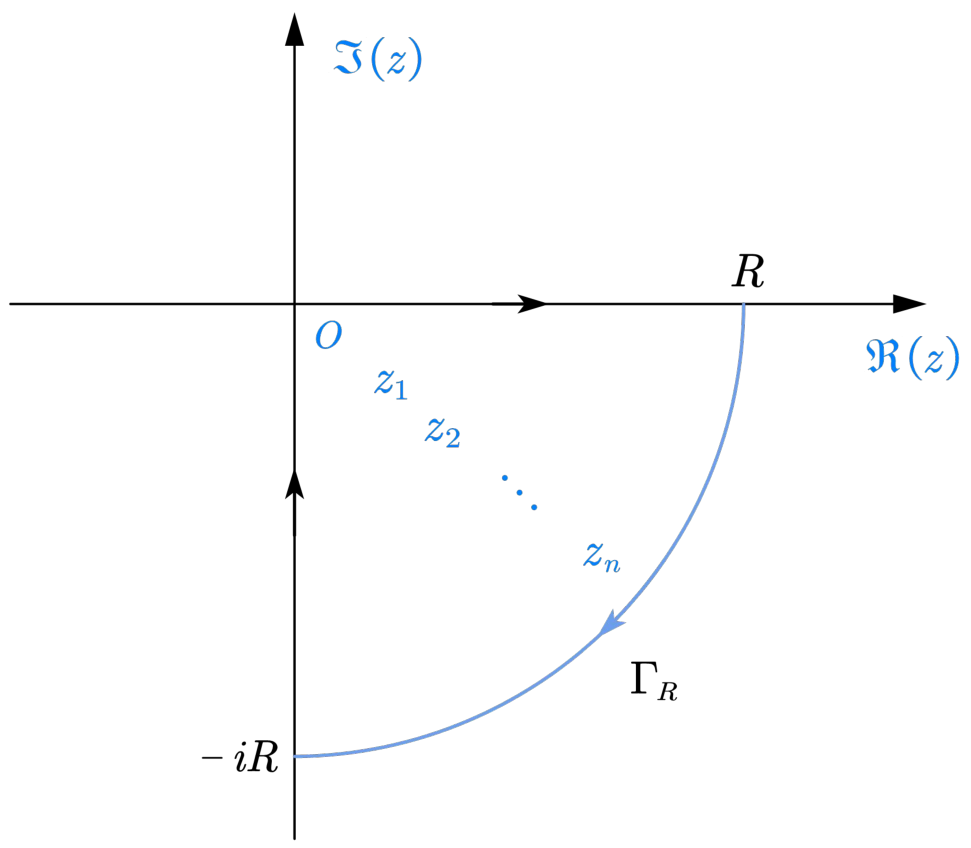}
\end{center}
\iffalse
Obviously, there exist poles in case that
$$
\left( \sinh z+i\sin z \right) \left( \cosh z-\cos z \right) =-2i\sin ^2\left\{ \frac{1-i}{2}z \right\} \sin \left\{ \left( 1+i \right) z \right\} =0.
$$
\fi
Similar to Theorem 3.1, we derive that the only poles inside $C_R$ are $z_n=\frac{n(1-i)\pi}{2}$ for $ n\ge 1$ with $ |z_n|< R$, and the residue $\text{Res}[F_2(z),z_n]$ at each pole is given by
\iffalse
=\frac{\left( -1 \right) ^{n-1}\left( \tfrac{1-i}{2}n\pi \right) ^a}{2i\left( 1+i \right) \sinh ^2\left( \left( \tfrac{1-i}{2} \right) ^2n\pi \right)}
\nonumber\\
&
=\frac{\left( -1 \right)^{n-1}\left( \tfrac{1-i}{2}n\pi \right) ^a}{2i\left( 1+i \right) \sinh ^2\left( \tfrac{n\pi}{2} \right)}.
\fi
\begin{align}
&\mathrm{Res}\left[ F_2\left( z \right) ,z=\frac{1-i}{2}n\pi \right]=\left( \frac{1-i}{2} \right) ^{a+1}\frac{\pi ^a}{2i}\frac{\left( -1 \right)^{n-1}n^a}{\sinh ^2\left( \frac{n\pi}{2} \right)}.
\end{align}
It is easy to show that, letting $R\rightarrow \infty$,
$$
\int_{\Gamma _R}^{}{\frac{z^a\mathrm{d}z}{\left( \sinh z+i\sin z \right) \left( \cosh z-\cos z \right)}}=o\left( 1 \right) .
$$
Applying the residue theorem and letting $R$ tend to infinity, we conclude that
\begin{align*}
&-2\pi i\sum_{n=1}^{\infty}{\mathrm{Res}\left[ F_2\left( z \right) ,z_n \right]}=\lim_{R\rightarrow \infty} \int_{\mathrm{C}_R}^{}{\frac{z^a\mathrm{d}z}{\left( \sinh z+i\sin z \right) \left( \cosh z-\cos z \right)}}
\\
&=\int_0^{\infty}{\frac{x^a\left( \sinh x-i\sin x \right) \mathrm{d}x}{\left[ \sinh ^2x+\sin ^2x \right] \left[ \cosh x-\cos x \right]}}+
\int_0^{\infty}{\frac{(-i)^ax^a\left( \sin x-i\sinh x \right) \mathrm{d}x}{\left[ \sinh ^2x+\sin ^2x \right] \left[ \cosh x-\cos x \right]}}.
\end{align*}
Thus, the proof is complete.
\end{proof}

\begin{re}
Fix the principal branch of $z^a$ on the sector $-\pi/2<\arg z<0$. For $2<\Re(a)<3$, indent the contour at the origin by the small arc
$\Gamma_\varepsilon=\{\varepsilon e^{it}:-\pi/2\le t\le0\}$, see the following figure.
Near $z=0$ the integrand in \eqref{twoCosh+-} satisfies $F_2(z)=O(|z|^{\Re(a)-3})$, hence
\[
\Bigl|\int_{\Gamma_\varepsilon}F_2(z)\,dz\Bigr|
=O(\varepsilon^{\Re(a)-2})\to 0, \quad \text{as} \,\varepsilon\to0,
\]
and the large arc still contributes $o(1)$. Therefore the
left-hand side of \eqref{twoCosh+-} defines an analytic function of $a$ on $\Re(a)>2$.
The right-hand side is entire in $a$ (uniform convergence).
Since the identity holds for all real $a\ge3$, the identity theorem extends \eqref{twoCosh+-} to the
half-plane $\Re(a)>2$. \emph{The case of \eqref{sincosh--} follows by the same argument.}
\end{re}
\begin{figure}[htbp]
	\centering
	\includegraphics[height=2.2in]{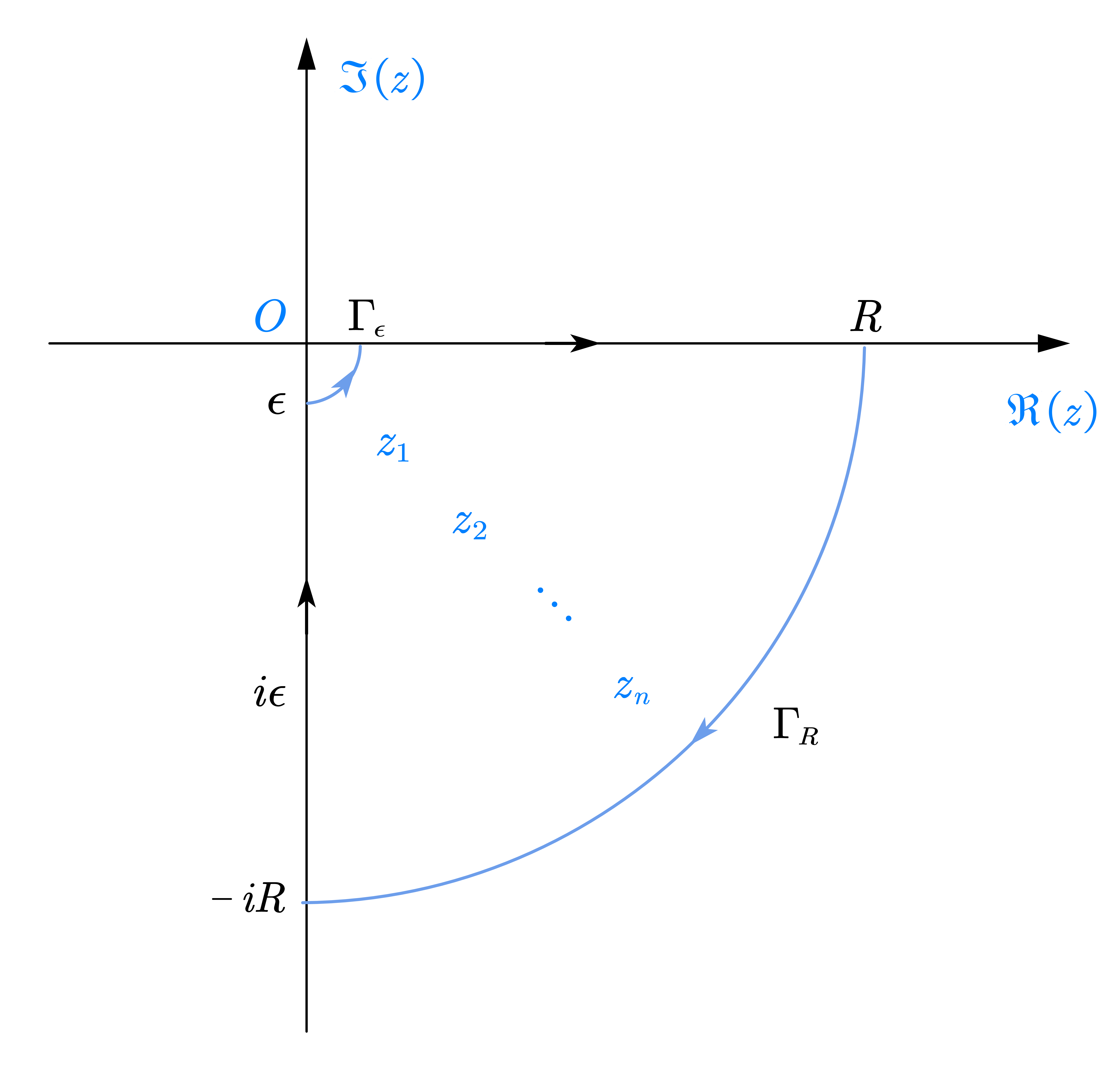} % 你的图文件名
	\label{fig:Q2}
\end{figure}

\begin{thm}\label{Sin-,Cos-}
For any  $a\ge3$, we have
\begin{align}\label{sincosh--}
\int_0^{\infty}{\frac{x^a\mathrm{d}x}{\left[ \sinh x-i\sin x \right] \left[ \cosh x-\cos x \right]}}-\left( -i \right) ^{a+1}\int_0^{\infty}{\frac{x^a\mathrm{d}x}{\left[ \sinh x+i\sin x \right] \left[ \cosh x-\cos x \right]}}
\nonumber\\
=2\left( 1-i \right) ^{a-1}\pi ^a\sum_{n=1}^{\infty}{\left[ \frac{an^{a-1}}{\sinh \left( 2n\pi \right)}-2\pi \frac{n^a\cosh \left( 2n\pi \right)}{\sinh ^2\left( 2n\pi \right)} \right]}.
\end{align}	
\end{thm}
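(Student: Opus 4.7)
The plan is to mirror the contour-integration arguments of Theorems \ref{thm1cosh++} and \ref{Sin+,Cos-}, applying them to the same fourth-quadrant quarter-circle $C_R$ as in Theorem \ref{Sin+,Cos-}, consisting of $[0,R]$, the arc $\Gamma_R=\{Re^{i\theta}:-\pi/2\le\theta\le 0\}$, and the segment $[-iR,0]$. The meromorphic integrand I would choose is
\[
F_3(z):=\frac{z^a}{(\sinh z-i\sin z)(\cosh z-\cos z)}.
\]
A product-to-sum factorization gives
\[
(\sinh z-i\sin z)(\cosh z-\cos z)=-2i\,\sin^{2}\!\bigl(\tfrac{(1+i)z}{2}\bigr)\sin((1-i)z),
\]
so the singularities of $F_3$ inside $C_R$ are double poles at $z_n=n\pi(1-i)$ for $n\ge 1$; the other family of zeros (of $\sin((1-i)z)$) sits at $\tfrac{n\pi(1+i)}{2}$, in the wrong quadrant, and $\sin((1-i)z_n)=-i\sinh(2n\pi)\ne0$, so no cancellation occurs at $z_n$.

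The first delicate step is computing the double-pole residue. Setting $w=z-z_n$, Taylor expansion yields $\sin^{2}(\tfrac{(1+i)w}{2})=\tfrac{iw^2}{2}(1+O(w^2))$ and
\[
\sin((1-i)z)=-i\sinh(2n\pi)+(1-i)\cosh(2n\pi)\,w+O(w^2),
\]
so after extracting the $w^{-1}$ coefficient of $F_3(z_n+w)$ and simplifying (using $i(1+i)=-(1-i)$) one obtains
\[
\mathrm{Res}[F_3,z_n]=\frac{ia(n\pi)^{a-1}(1-i)^{a-1}}{\sinh(2n\pi)}+\frac{(n\pi)^{a}(1-i)^{a+1}\cosh(2n\pi)}{\sinh^{2}(2n\pi)}.
\]

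Next I would parametrize the three arcs of $C_R$ as $R\to\infty$. On $[0,R]$ we directly get $\int_0^\infty x^a\,\mathrm{d}x/[(\sinh x-i\sin x)(\cosh x-\cos x)]$. On $[-iR,0]$, substituting $z=-iy$ and using $\sinh(-iy)=-i\sin y$, $\sin(-iy)=-i\sinh y$, $\cosh(-iy)=\cos y$, $\cos(-iy)=\cosh y$, one finds that the two sign flips combine to convert the denominator into $(\sinh y+i\sin y)(\cosh y-\cos y)$, and the segment contributes precisely $-(-i)^{a+1}\int_0^\infty y^a\,\mathrm{d}y/[(\sinh y+i\sin y)(\cosh y-\cos y)]$, matching the second term on the LHS of \eqref{sincosh--}. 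The arc integral $\int_{\Gamma_R}F_3\,\mathrm{d}z$ is $o(1)$ by the standard estimate: the worst ray is $\arg z=-\pi/4$, where the denominator still grows like $e^{\sqrt{2}R}$ and dominates the polynomial $R^a$ (provided $R$ is chosen to avoid the poles, e.g.\ $R_N=(N+\tfrac12)\pi\sqrt{2}$); the hypothesis $a\ge 3$ simultaneously guarantees absolute convergence at $z=0$, where the denominator vanishes cubically.

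Applying the residue theorem to $C_R$ with its clockwise orientation produces $-2\pi i\sum_{n\ge1}\mathrm{Res}[F_3,z_n]$ as the value of the left-hand side of \eqref{sincosh--}. Using $(1-i)^2=-2i$, so that $-2\pi i\cdot i(1-i)^{a-1}=2(1-i)^{a-1}\pi$ for the first residue piece and $-2\pi i\cdot(1-i)^{a+1}=-2\pi i(1-i)^2(1-i)^{a-1}=-4\pi(1-i)^{a-1}$ for the second, the two residue pieces collapse exactly into
\[
2(1-i)^{a-1}\pi^a\sum_{n=1}^{\infty}\Bigl[\frac{a n^{a-1}}{\sinh(2n\pi)}-\frac{2\pi n^a\cosh(2n\pi)}{\sinh^{2}(2n\pi)}\Bigr],
\]
which is the right-hand side of \eqref{sincosh--}. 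The main obstacle, as in the proof of Theorem \ref{Sin+,Cos-}, is handling the double pole correctly: one must retain the linear-in-$w$ terms of \emph{both} $z^a$ and $1/\sin((1-i)z)$ and combine them, because it is exactly their cross-contribution that produces the $(1-i)^{a+1}$ prefactor of the $\cosh/\sinh^2$ piece — any dropped factor of $(1-i)$ or sign slip there would ruin the final identity.
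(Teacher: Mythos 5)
Your proof is correct and takes exactly the route the paper intends: the paper's own ``proof'' of this theorem is a one-line deferral to the contour/residue argument of Theorem \ref{Sin+,Cos-}, and your write-up simply supplies the omitted details. I verified the factorization $-2i\sin^2(\tfrac{(1+i)z}{2})\sin((1-i)z)$, the double-pole residue at $z_n=n\pi(1-i)$, the $[-iR,0]$ substitution giving the $-(-i)^{a+1}$ term, and the clockwise factor $-2\pi i$; everything matches \eqref{sincosh--}.
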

\begin{proof}
Consider
\begin{align}\label{equations ,sinh,even}
\mathscr{H} _a=\lim_{R\rightarrow \infty} \int_{\mathrm{C}_R}^{}{\frac{z^a\mathrm{d}z}{\left( \sinh z-i\sin z \right) \left( \cosh z-\cos z \right)}},
\end{align}
where $C_R$ denotes the same positively oriented quarter-circular contour that we used in the proof of Theorem \ref{Sin+,Cos-}. 	
The proof is completely similar as the proof of Theorem \ref{Sin+,Cos-}, so we leave it to the interested reader.
\end{proof}

\begin{thm}\label{Sin-,Cos+}
For any  $a\ge 1$, we have
\begin{align}\label{onecosh-+}
\int_0^{\infty}&{\frac{x^a\mathrm{d}x}{\left[ \sinh x-i\sin x \right] \left[ \cosh x+\cos x \right]}}-\int_0^{\infty}{\frac{i^{a+1}x^a\mathrm{d}x}{\left[ \sinh x+i\sin x \right] \left[ \cosh x+\cos x \right]}}
\nonumber\\
&
\quad\quad	=\frac{\left( 1+i \right) ^{1+a}i}{2^{a-1}}\pi ^a\sum_{n=0}^{\infty}{\left[ \frac{a\left( 2n+1 \right) ^{a-1}}{\sinh \left( \left( 2n+1 \right) \pi \right)}-\pi \frac{\left( 2n+1 \right) ^a\cosh \left( \left( 2n+1 \right) \pi \right)}{\sinh ^2\left( \left( 2n+1 \right) \pi \right)} \right]}.
\end{align}
\end{thm}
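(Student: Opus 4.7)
The plan is to follow the same contour-integration blueprint as in Theorems \ref{thm1cosh++} and \ref{Sin+,Cos-}, applied to
\[
F_4(z):=\frac{z^a}{(\sinh z-i\sin z)(\cosh z+\cos z)}
\]
integrated over the first-quadrant quarter-circle contour $C_R=[0,R]\cup\Gamma_R\cup[iR,0]$ used in the proof of Theorem \ref{thm1cosh++}. Exactly as in that theorem, using $\sin z=-i\sinh(iz)$ together with the sum-to-product identities yields the factorisation
\[
(\sinh z-i\sin z)(\cosh z+\cos z)=-2i\,\sin\!\bigl((1+i)z\bigr)\,\cos^{2}\!\Bigl(\tfrac{1-i}{2}z\Bigr).
\]
The zeros of $\sin((1+i)z)$ are $z=n\pi(1-i)/2$, $n\in\mathbb{Z}\setminus\{0\}$, all of which lie outside the first quadrant, so the only singularities of $F_4$ inside $C_R$ are the \emph{double} zeros of $\cos\!\bigl(\tfrac{1-i}{2}z\bigr)$, located at $z_n=\tfrac{(2n-1)(1+i)\pi}{2}$ for $n=1,2,\dots$; the origin is regular (indeed removable) for $a\ge 1$.

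The core step is the double-pole residue. Using $(1+i)z_n=(2n-1)i\pi$ one gets $\sin((1+i)z_n)=i\sinh((2n-1)\pi)$ and $\cos((1+i)z_n)=\cosh((2n-1)\pi)$, and, setting $w=z-z_n$, a direct expansion gives
\[
\cos\!\Bigl(\tfrac{1-i}{2}z\Bigr)=(-1)^{n}\sin\!\Bigl(\tfrac{1-i}{2}w\Bigr),\qquad \cos^{2}\!\Bigl(\tfrac{1-i}{2}z\Bigr)=-\tfrac{i}{2}w^{2}\bigl(1+O(w^{2})\bigr).
\]
Hence $(z-z_n)^{2}F_4(z)=-\dfrac{z^{a}}{\sin((1+i)z)}\bigl(1+O(w^{2})\bigr)$, and the standard double-pole formula $\mathrm{Res}[F_4,z_n]=\lim_{w\to 0}\tfrac{d}{dw}\bigl[(z-z_n)^{2}F_4(z)\bigr]$, after using $(1+i)^{2}=2i$, produces
\[
\mathrm{Res}[F_4,z_n]=i\Bigl(\tfrac{(1+i)\pi}{2}\Bigr)^{\!a-1}\!\left[\frac{a(2n-1)^{a-1}}{\sinh((2n-1)\pi)}-\frac{\pi(2n-1)^{a}\cosh((2n-1)\pi)}{\sinh^{2}((2n-1)\pi)}\right].
\]

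Finally, the arc integral over $\Gamma_R$ is $o(1)$ by the usual exponential-decay estimate, while parametrising the downward leg by $z=iy$ (and using $\sinh(iy)=i\sin y$, $\sin(iy)=i\sinh y$, $\cosh(iy)=\cos y$, $\cos(iy)=\cosh y$) gives
\[
\int_{iR}^{0}F_4(z)\,dz=-i^{a+1}\!\int_{0}^{R}\frac{y^{a}\,dy}{(\sinh y+i\sin y)(\cosh y+\cos y)}.
\]
Invoking the residue theorem $\oint_{C_R}F_4(z)\,dz=2\pi i\sum_{n\ge 1}\mathrm{Res}[F_4,z_n]$, reindexing by $m=n-1$, and verifying the elementary identity
\[
2\pi i\cdot i\Bigl(\tfrac{(1+i)\pi}{2}\Bigr)^{\!a-1}=\frac{(1+i)^{a+1}i}{2^{a-1}}\pi^{a}
\]
then yields \eqref{onecosh-+}. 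The chief technical obstacle is the careful bookkeeping at the double pole: because the $a(2n-1)^{a-1}/\sinh$ term arises from differentiating $z^{a}$ while the $(2n-1)^{a}\cosh/\sinh^{2}$ term arises from differentiating $\sin((1+i)z)$, the $(1\pm i)$-factors must be tracked precisely to recover the prefactor $(1+i)^{a+1}i/2^{a-1}$ displayed on the right-hand side.
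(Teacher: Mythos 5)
Your proposal is correct and follows exactly the route the paper intends: the paper's own ``proof'' merely says the argument is completely similar to Theorem \ref{thm1cosh++} over the same first-quadrant contour, and you supply the omitted details. Your factorisation $(\sinh z-i\sin z)(\cosh z+\cos z)=-2i\sin((1+i)z)\cos^{2}(\tfrac{1-i}{2}z)$, the identification of the double poles at $z_n=\tfrac{(2n-1)(1+i)\pi}{2}$ (the genuinely new feature compared with the simple poles of Theorem \ref{thm1cosh++}), the double-pole residue, the parametrisation of the imaginary leg, and the prefactor identity $2\pi i\cdot i\bigl(\tfrac{(1+i)\pi}{2}\bigr)^{a-1}=\tfrac{(1+i)^{a+1}i}{2^{a-1}}\pi^{a}$ all check out.
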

\begin{proof}
Consider
\begin{align}\label{equations ,sinh,odd}
\mathscr{W} _a=\lim_{R\rightarrow \infty} \int_{\mathrm{C}_R}^{}{\frac{z^a\mathrm{d}z}{\left( \sinh z-i\sin z \right) \left( \cosh z+\cos z \right)}},
\end{align}
where $C_R$ denotes the same positively oriented quarter-circular contour that we used in the proof of Theorem \ref{thm1cosh++}.
The proof is completely similar as the proof of Theorem \ref{thm1cosh++}, we leave the details to the interested reader.
\end{proof}

Setting $a=2\mu+1$ and $2\mu$ (or alternatively $2\mu+2$) in Theorems \ref{thm1cosh++}-\ref{Sin-,Cos+} respectively, yields the following corollaries.
\begin{cor}
	For $\mu\in \N$, we have
	\begin{align}
		\int_0^{\infty}{\frac{\left[ 1+\left( -1 \right) ^{\mu +1} \right] x^{2\mu +1}\sinh x\mathrm{d}x}{\left[ \sinh ^2x+\sin ^2x \right] \left[ \cosh x+\cos x \right]}}+&i\int_0^{\infty}{\frac{\left[ \left( -1 \right) ^{\mu +1}-1 \right] x^{2\mu +1}\sin x\mathrm{d}x}{\left[ \sinh ^2x+\sin ^2x \right] \left[ \cosh x+\cos x \right]}}
		\nonumber\\
		&=-\left( \frac{i}{2}\pi ^2 \right) ^{\mu +1} \bar{C}_{2\mu+1,2}\left( \frac{\pi}{2} \right) ,
	\end{align}
	\begin{align}
		\int_0^{\infty}{\frac{\left( 1+\left( -1 \right) ^{\mu +1} \right) x^{2\mu +1}\sinh x\mathrm{d}x}{\left[ \sinh ^2x+\sin ^2x \right] \left[ \cosh x-\cos x \right]}}+&i\int_0^{\infty}{\frac{\left( \left( -1 \right) ^{\mu +1}-1 \right) x^{2\mu +1}\sin x\mathrm{d}x}{\left[ \sinh ^2x+\sin ^2x \right] \left[ \cosh x-\cos x \right]}}
		\nonumber\\
		&=-\left( -\frac{i}{2}\pi ^2 \right) ^{\mu +1}\bar{S}_{2\mu+1,2}\left( \frac{\pi}{2} \right),
	\end{align}
	\begin{align}\label{sin+,cos-,odd}
		\int_0^{\infty}{\frac{\left( 1+\left( -1 \right) ^{\mu} \right) x^{2\mu +1}\sinh x\mathrm{d}x}{\left[ \sinh ^2x+\sin ^2x \right] \left[ \cosh x+\cos x \right]}+}i\int_0^{\infty}{\frac{\left( 1-\left( -1 \right) ^{\mu} \right) x^{2\mu +1}\sin x\mathrm{d}x}{\left[ \sinh ^2x+\sin ^2x \right] \left[ \cosh x+\cos x \right]}}
		\nonumber\\
		=
		\frac{i^{\mu +2}\pi ^{2\mu +1}}{2^{\mu -1}}\left[ \left( 2\mu +1 \right) S'_{2\mu ,1}\left( 2\pi \right) -\pi \mathrm{DS}'_{2\mu +1,2}\left( 2\pi \right) \right] ,
	\end{align}
	\begin{align}
		\int_0^{\infty}{\frac{\left( 1+\left( -1 \right) ^{\mu} \right) x^{2\mu +1}\sinh x\mathrm{d}x}{\left[ \sinh ^2x+\sin ^2x \right] \left[ \cosh x-\cos x \right]}}+i\int_0^{\infty}{\frac{\left( 1+\left( -1 \right) ^{\mu +1} \right) x^{2\mu +1}\sin x\mathrm{d}x}{\left[ \sinh ^2x+\sin ^2x \right] \left[ \cosh x-\cos x \right]}}
		\nonumber\\
		=
		\left( -i \right) ^{\mu}2^{\mu +1}\pi ^{2\mu +1}{\left[ \left( 2\mu +1 \right) S_{2\mu ,1}\left( 2\pi \right) -2\pi \mathrm{DS}_{2\mu +1,2}\left( 2\pi \right) \right]}.
	\end{align}
	
\end{cor}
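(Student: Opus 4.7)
The plan is to obtain each of the four identities directly by specializing $a=2\mu+1$ in Theorems~\ref{thm1cosh++}, \ref{Sin+,Cos-}, \ref{Sin-,Cos+} and \ref{Sin-,Cos-}, respectively, and then performing the routine algebraic simplifications on both sides. There is no new analytic input needed; the contour integration has already produced the master identities, and the corollary is simply what falls out when the parameter is odd.

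First, I would handle the complex powers on the left. For $a=2\mu+1$ the coefficients appearing in the four theorems reduce via
\[
i^{a}=(-1)^{\mu}i,\qquad (-i)^{a}=-(-1)^{\mu}i,\qquad i^{a+1}=(-1)^{\mu+1},\qquad (-i)^{a+1}=(-1)^{\mu+1}.
\]
In Theorems~\ref{thm1cosh++} and \ref{Sin+,Cos-}, substituting this into the linear combinations $(\sinh x-i\sin x)\mp i^{a}(\sin x - i\sinh x)$ and collecting real/imaginary parts produces exactly the factors $(1-(-1)^{\mu})\sinh x$ and $-i(1+(-1)^{\mu})\sin x$, which are the coefficients $[1+(-1)^{\mu+1}]$ and $[(-1)^{\mu+1}-1]$ written in the corollary. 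For Theorems~\ref{Sin-,Cos-} and \ref{Sin-,Cos+}, I would first use the partial-fraction-type rewriting
\[
\frac{1}{\sinh x\mp i\sin x}=\frac{\sinh x\pm i\sin x}{\sinh^{2}x+\sin^{2}x}
\]
to put their left-hand sides into the same $(\sinh^{2}x+\sin^{2}x)$-denominator form. A parallel collection of terms then yields the factors $(1+(-1)^{\mu})\sinh x$ and $i(1-(-1)^{\mu})\sin x$ (respectively $i(1+(-1)^{\mu+1})\sin x$), matching the third and fourth equations of the corollary.

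Next I would simplify the right-hand sides. Using $(1+i)^{2}=2i$ and $(1-i)^{2}=-2i$, one gets $(1\pm i)^{a+1}=(1\pm i)^{2\mu+2}=(\pm 2i)^{\mu+1}$, so that
\[
\Bigl(\tfrac{1+i}{2}\pi\Bigr)^{a+1}=\Bigl(\tfrac{i\pi^{2}}{2}\Bigr)^{\mu+1},\qquad
\Bigl(\tfrac{1-i}{2}\pi\Bigr)^{a+1}=\Bigl(-\tfrac{i\pi^{2}}{2}\Bigr)^{\mu+1},
\]
and similarly $2(1-i)^{a-1}\pi^{a}=(-i)^{\mu}2^{\mu+1}\pi^{2\mu+1}$ and $(1+i)^{a+1}i/2^{a-1}=i^{\mu+2}\pi^{2\mu+1}/2^{\mu-1}$ for the other two theorems. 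The remaining sums are then identified with the definitions at the start of Section~3: the sums $\sum(-1)^{n}n^{2\mu+1}/\cosh^{2}(n\pi/2)$ and $\sum(-1)^{n}n^{2\mu+1}/\sinh^{2}(n\pi/2)$ become $-\bar{C}_{2\mu+1,2}(\pi/2)$ and $-\bar{S}_{2\mu+1,2}(\pi/2)$; the sums indexed over $2n+1$ become $(2\mu+1)S'_{2\mu,1}(2\pi)-\pi\mathrm{DS}'_{2\mu+1,2}(2\pi)$; and the sums over $n\ge 1$ become $(2\mu+1)S_{2\mu,1}(2\pi)-2\pi\mathrm{DS}_{2\mu+1,2}(2\pi)$.

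I do not expect any genuine obstacle; the argument is essentially bookkeeping. The only step that requires care is tracking the four parity factors $(1\pm(-1)^{\mu})$ and $(1\pm(-1)^{\mu+1})$ so that the real $\sinh x$ integrand and the imaginary $\sin x$ integrand are separated correctly in each of the four cases, and verifying that the power $a+1=2\mu+2$ is even so that $(1\pm i)^{a+1}$ is a clean real-times-$i$-power expression. Once these signs and powers are aligned with the labels $\bar{C}$, $\bar{S}$, $S'$, $\mathrm{DS}'$, $S$, $\mathrm{DS}$, the four formulas of the corollary follow immediately.
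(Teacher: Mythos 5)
Your proposal is correct and is exactly the paper's approach: the paper derives this corollary simply by setting $a=2\mu+1$ in Theorems \ref{thm1cosh++}--\ref{Sin-,Cos+} and carrying out the same sign and power bookkeeping you describe. Your reductions of $i^a$, $(-i)^a$, $(1\pm i)^{a+1}$ and the identification of the sums with $\bar{C}$, $\bar{S}$, $S'$, $\mathrm{DS}'$, $S$, $\mathrm{DS}$ all check out.
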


\begin{cor}
	For $\mu\in \N$, we have
	\begin{align}
		\int_0^{\infty}{\frac{x^{2\mu}\left( \sinh x-i\sin x \right) \mathrm{d}x}{\left[ \sinh ^2x+\sin ^2x \right] \left[ \cosh x+\cos x \right]}}-&\left( -1 \right) ^{\mu}\int_0^{\infty}{\frac{x^{2\mu}\left( \sin x-i\sinh x \right) \mathrm{d}x}{\left[ \sinh ^2x+\sin ^2x \right] \left[ \cosh x+\cos x \right]}}
		\nonumber\\
		&=-\left( \frac{1+i}{2}\pi \right) ^{2\mu +1}\bar{C}_{2\mu,2}\left( \frac{\pi}{2} \right),
	\end{align}
	\begin{align}
		\int_0^{\infty}{\frac{x^{2\mu+2}\left( \sinh x-i\sin x \right) \mathrm{d}x}{\left[ \sinh ^2x+\sin ^2x \right] \left[ \cosh x-\cos x \right]}}+&\left( -1 \right) ^{\mu+1}\int_0^{\infty}{\frac{x^{2\mu+2}\left( \sin x-i\sinh x \right) \mathrm{d}x}{\left[ \sinh ^2x+\sin ^2x \right] \left[ \cosh x-\cos x \right]}}
		\nonumber\\
		&=-\left( \frac{1-i}{2}\pi \right) ^{2\mu +3}\bar{S}_{2\mu+2,2}\left( \frac{\pi}{2} \right),
	\end{align}
	\begin{align}
		\int_0^{\infty}{\frac{x^{2\mu}\left( \sinh x+i\sin x \right) \mathrm{d}x}{\left[ \sinh ^2x+\sin ^2x \right] \left[ \cosh x+\cos x \right]}}-\left( -1 \right) ^{\mu}i\int_0^{\infty}{\frac{x^{2\mu}\left( \sinh x-i\sin x \right) \mathrm{d}x}{\left[ \sinh ^2x+\sin ^2x \right] \left[ \cosh x+\cos x \right]}}
		\nonumber\\
		\quad \quad =
		\frac{\left( 1+i \right) ^{2\mu +1}i}{2^{2\mu -1}}\pi ^{2\mu}\left[ 2\mu S'_{2\mu -1,1}\left( 2\pi \right) -\pi \mathrm{DS}'_{2\mu ,2}\left( 2\pi \right) \right],
	\end{align}	
	\begin{align}
		\int_0^{\infty}{\frac{x^{2\mu+2}\left( \sinh x+i\sin x \right) \mathrm{d}x}{\left[ \sinh ^2x+\sin ^2x \right] \left[ \cosh x-\cos x \right]}}+\left( -1 \right) ^{\mu+1}i\int_0^{\infty}{\frac{x^{2\mu+2}\left( \sinh x-i\sin x \right) \mathrm{d}x}{\left[ \sinh ^2x+\sin ^2x \right] \left[ \cosh x-\cos x \right]}}
		\nonumber\\=
		4\left( 1-i \right) ^{2\mu+1}\pi ^{2\mu+2} \left[ (\mu+1) S_{2\mu+1,1}\left( 2\pi \right) -\pi \mathrm{DS}_{2\mu+2 ,2}\left( 2\pi \right) \right].
	\end{align}
\end{cor}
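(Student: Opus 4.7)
The plan is to obtain the four identities by specializing the parameter $a$ to appropriate even integers in Theorems \ref{thm1cosh++}--\ref{Sin-,Cos+} and then re-expressing the resulting series using the hyperbolic sums introduced in the Definition. Because $a$ is even in each case, $i^a$ and $(-i)^a$ are real, so the LHS of the four theorems is already a real linear combination of integrals of the type appearing in the corollary; no interchange between real and imaginary parts is needed.

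For identities (i) and (ii), substitute $a=2\mu$ into Theorem \ref{thm1cosh++} and $a=2\mu+2$ into Theorem \ref{Sin+,Cos-}. Using $i^{2\mu}=(-1)^\mu$ and $(-i)^{2\mu+2}=(-1)^{\mu+1}$, the LHS takes exactly the form stated. On the RHS, the factor $(-1)^n$ equals $-(-1)^{n-1}$, so
\begin{equation*}
\sum_{n=1}^{\infty}\frac{(-1)^n n^{2\mu}}{\cosh^2(n\pi/2)}=-\bar C_{2\mu,2}(\pi/2),\qquad
\sum_{n=1}^{\infty}\frac{(-1)^n n^{2\mu+2}}{\sinh^2(n\pi/2)}=-\bar S_{2\mu+2,2}(\pi/2),
\end{equation*}
yielding the two closed forms immediately.

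For identities (iii) and (iv), substitute $a=2\mu$ into Theorem \ref{Sin-,Cos+} and $a=2\mu+2$ into Theorem \ref{Sin-,Cos-}. The key preparatory step is the elementary rationalization
\begin{equation*}
\frac{1}{\sinh x\pm i\sin x}=\frac{\sinh x\mp i\sin x}{\sinh^2 x+\sin^2 x},
\end{equation*}
which converts each integrand on the LHS of those two theorems into the $(\sinh x\pm i\sin x)/(\sinh^2 x+\sin^2 x)$ form required by the corollary. The coefficients arising from $i^{2\mu+1}=i(-1)^\mu$ and $(-i)^{2\mu+3}=i(-1)^{\mu}$ produce precisely the scalars $-(-1)^\mu i$ and $(-1)^{\mu+1}i$ in front of the second integrals. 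On the RHS, after the shift $n\mapsto n-1$ in the series over odd indices, the terms $(2n-1)^{p}/\sinh((2n-1)\pi)$ and $(2n-1)^{p}\cosh((2n-1)\pi)/\sinh^2((2n-1)\pi)$ match the definitions of $S'_{p,1}(2\pi)$ and $\mathrm{DS}'_{p,2}(2\pi)$, while the summations over $n\ge 1$ with argument $2n\pi$ collapse directly to $S_{p,1}(2\pi)$ and $\mathrm{DS}_{p,2}(2\pi)$. Factoring the $2(\mu+1)$ out of $2\mu+2$ gives the overall constant $4$ in the last identity.

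No step is conceptually hard; the only real obstacle is the accurate bookkeeping of the powers $i^{a}$, $(\pm 1\pm i)^{a+1}$, and $(-1)^\mu$ across the four cases, together with a clean re-indexing of the summations so that they fit the definitions of $S'_{p,m}$, $\mathrm{DS}'_{p,m}$, $S_{p,m}$, $\mathrm{DS}_{p,m}$, $\bar S_{p,m}$, and $\bar C_{p,m}$.
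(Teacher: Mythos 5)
Your proposal is correct and follows exactly the route the paper intends: specialize $a$ to the even values $2\mu$ (Theorems \ref{thm1cosh++}, \ref{Sin-,Cos+}) and $2\mu+2$ (Theorems \ref{Sin+,Cos-}, \ref{Sin-,Cos-}), rationalize $1/(\sinh x\pm i\sin x)$ where needed, and rewrite the residue sums via the definitions of $\bar C_{p,2}$, $\bar S_{p,2}$, $S_{p,1}$, $S'_{p,1}$, $\mathrm{DS}_{p,2}$, $\mathrm{DS}'_{p,2}$; all the sign and power-of-$i$ bookkeeping you record checks out. The paper compresses this into the single sentence preceding the corollary, so your write-up is just a more explicit version of the same argument.
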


\section{Evaluations of Hyperbolic Summations via Jacobi Functions}

Now, we evaluate the  reciprocal hyperbolic series of Ramanujan type
$$
\bar{C}_{2p,2}\left( y \right)=\sum_{n=1}^{\infty}{\frac{n^{2p}\left( -1 \right) ^{n-1}}{\cosh ^2\left( ny \right)}}\quad\text{and}\quad\bar{S}_{2p,2}\left( y \right) =\sum_{n=1}^{\infty}{\frac{n^{2p}\left( -1 \right) ^{n-1}}{\sinh ^2\left( ny \right)}}
$$
by applying the Fourier series expansions and the Maclaurin series expansions of certain Jacobi elliptic functions. Let $x,y$ and $z$ satisfy the relations in \eqref{notations-Ramanujan}.

\begin{thm}\label{barC2p,2}
Let $p\in \mathbb{N}$, we have
\begin{align}
\bar{C}_{2p,2}\left( y \right)=&\left( -1 \right) ^p\frac{pz^{2p+1}z'x(1-x)}{2^{2p-1}}\mathrm{q}_{2p-1}\left( 1-x \right) \sqrt{1-x}\nonumber\\
& +\left( -1 \right) ^p\frac{z^{2p+2}x(1-x)}{2^{2p}}\frac{\mathrm{d}}{\mathrm{d}x}\left[ \mathrm{q}_{2p-1}\left( 1-x \right) \sqrt{1-x} \right],
\end{align}
where ${\rm q}_{2p-1}\left( x \right)$ appears in the coefficients of the Maclaurin series of $\mathrm{sn}\left( u \right)$.
\end{thm}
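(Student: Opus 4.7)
The plan is to combine the quasi-reciprocity identity of Lemma~\ref{lem-2,C,S}---which relates $\bar{C}_{2p,2}(\alpha)$ to $S'_{2p-1,1}(\beta)$ and $\mathrm{DS}'_{2p,2}(\beta)$ whenever $\alpha\beta=\pi^2$---with closed forms for $S'_{2p-1,1}$ and $\mathrm{DS}'_{2p,2}$ obtained by matching the two standard expansions of $\mathrm{sn}(u,k)$.

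First I would equate the $(2p-1)$-th derivative at $u=0$ of the Maclaurin series in Lemma~\ref{sn cofficient symmetry} with that of the classical Fourier expansion
\begin{equation*}
\mathrm{sn}(u,k)=\frac{2\pi}{Kk}\sum_{n\ge 0}\frac{q^{n+1/2}}{1-q^{2n+1}}\sin\frac{(2n+1)\pi u}{2K}.
\end{equation*}
Using $K=\pi z/2$, $k=\sqrt{x}$, $q=e^{-y}$, and $q^{n+1/2}/(1-q^{2n+1})=1/(2\sinh((2n+1)y/2))$, this produces $S'_{2p-1,1}(y)=\tfrac{1}{2}\sqrt{x}\,z^{2p}\,\mathrm{q}_{2p-1}(x)$.

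Next, since $\tfrac{d}{dy}(1/\sinh(ay/2))=-(a/2)\cosh(ay/2)/\sinh^2(ay/2)$, one has $\mathrm{DS}'_{2p,2}(y)=-2\,\tfrac{d}{dy}S'_{2p-1,1}(y)$, and combining this with $dx/dy=-x(1-x)z^2$ (a consequence of the Legendre relation $EK'+E'K-KK'=\pi/2$) yields $\mathrm{DS}'_{2p,2}(y)=x(1-x)z^2\,\tfrac{d}{dx}[\sqrt{x}\,z^{2p}\,\mathrm{q}_{2p-1}(x)]$. I would then apply the first transformation of Lemma~\ref{lem-2,transform}, which sends $(x,y,z,z')\mapsto(1-x,\pi^2/y,yz/\pi,\tfrac{1}{\pi}(\tfrac{1}{x(1-x)z}-yz'))$, to these two identities, and feed the resulting formulas for $S'_{2p-1,1}(\pi^2/y)$ and $\mathrm{DS}'_{2p,2}(\pi^2/y)$ into Lemma~\ref{lem-2,C,S} with $\alpha=y$, $\beta=\pi^2/y$.

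The main obstacle is the algebraic bookkeeping in this final step. The substitution of $z'$ splits $\mathrm{DS}'_{2p,2}(\pi^2/y)$ into a piece genuinely proportional to $z'$ (the one we want) plus a piece carrying an explicit $1/y$ factor, and this $1/y$ contribution must cancel exactly against the $1/y$ term arising from $S'_{2p-1,1}(\pi^2/y)$ in the first summand. Once that cancellation is verified, the remaining terms regroup through the product rule into $(-1)^p p\,z^{2p+1}z'x(1-x)/2^{2p-1}$ times $\mathrm{q}_{2p-1}(1-x)\sqrt{1-x}$, together with $(-1)^p z^{2p+2}x(1-x)/2^{2p}$ times $\tfrac{d}{dx}[\mathrm{q}_{2p-1}(1-x)\sqrt{1-x}]$, exactly as claimed.
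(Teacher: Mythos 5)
Your proposal is correct and follows essentially the same route as the paper: it invokes Lemma~\ref{lem-2,C,S} with $\alpha=y$, $\beta=\pi^2/y$, derives $S'_{2p-1,1}(y)=\tfrac12\sqrt{x}\,z^{2p}\mathrm{q}_{2p-1}(x)$ by matching the Maclaurin and Fourier expansions of $\mathrm{sn}(u)$, obtains $\mathrm{DS}'_{2p,2}$ by differentiating in $y$ and converting via $dx/dy=-x(1-x)z^2$, and then applies the first transformation of Lemma~\ref{lem-2,transform}. Your explicit remark about the cancellation of the $1/y$ terms between the two summands is a correct account of the ``elementary calculations'' the paper leaves implicit in its final step.
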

\begin{proof}
Setting $\alpha=y$ and $\beta=\tfrac{\pi ^2}{y}$ in Lemma \ref{lem-2,C,S} gives
\begin{align}\label{equ-lemma3-one}
\bar{C}_{2p,2}\left( y \right) =\frac{\left( -1 \right) ^p}{y^{2p+1}}\frac{p\pi ^{2p}}{2^{2p-2}}S_{2p-1,1}^{\prime}\left( \frac{\pi ^2}{y} \right) -\left( -1 \right) ^p\frac{\pi ^{2p+2}}{2^{2p}y^{2p+2}}\text{DS}'_{2p,2}\left( \frac{\pi ^2}{y} \right).
\end{align}
Now, we need to compute the series $S_{2p-1,1}^{'}\left( y \right)$. According to Lemma \ref{sn cofficient symmetry}, there exist polynomials  ${\rm q}_{2n-1}(x)\in\Z[x]$ such that
the Maclaurin series of ${\rm sn}(u)$ has the form
\begin{align}\label{sd-exped-one}
\mathrm{sn}\left( u \right) =\sum_{n=0}^{\infty}{\mathrm{q}_{2n+1}\left( x \right) \frac{\left( -1 \right) ^nu^{2n+1}}{\left( 2n+1 \right) !}}.
\end{align}	
From \cite{DCLMRT1992}, we recall the Fourier series expansions of Jacobi elliptic functions ${\rm sn}(u)$
\begin{align}\label{sd}
&{\rm sn}(u)=\frac{2\pi}{Kk}\sum_{n=0}^{\infty}\frac{q^{n+1/2}}{1-q^{2n+1}}\sin\left[(2n+1)\frac{\pi u}{2K}\right]\quad (|q|<1).
\end{align}
Setting  $q\equiv q\left( x \right)=e^{-y}$, we then have
\begin{align}\label{sd-one}
{\rm sn}(u)&=\frac{2\pi}{Kk}\sum_{n=0}^{\infty}{\frac{q^{n+\tfrac{1}{2}}\left( 2n+1 \right) ^{2j+1}}{1-q^{2n+1}}}\sum_{j=0}^{\infty}{\frac{\left( -1 \right) ^j\left( \frac{\pi u}{2K} \right) ^{2j+1}}{\left( 2j+1 \right) !}}
	 \nonumber\\
	 &=\frac{\pi}{Kk}\sum_{j=0}^{\infty}{\frac{\left( -1 \right) ^j\left( \frac{\pi u}{2K} \right) ^{2j+1}}{\left( 2j+1 \right) !}}S_{2j+1,1}^{\prime}\left( y \right) .
\end{align}	
Comparing the coefficients of $u^{2m+1}$ in \eqref{sd-exped-one} and \eqref{sd-one}, we can deduce
\begin{align}
S_{2n+1,1}'\left( y \right)=\sum_{j=1}^{\infty}{\frac{\left( 2j-1 \right) ^{2n+1}}{\sinh \left( \frac{2j-1}{2}y \right)}}
=z^{2n+2}\sqrt{x}\frac{\mathrm{q}_{2n+1}\left( x \right)}{2}. \label{S'2n+1,1}
\end{align}
Differentiating \eqref{S'2n+1,1} with respect to $y$ yields
\begin{align*}
\frac{\mathrm{d}}{\mathrm{d}y}S_{2p-1,1}^{\prime}\left( y \right) =\sum_{n=1}^{\infty}{\frac{\mathrm{d}}{\mathrm{d}y}\frac{\left( 2n-1 \right) ^{2p-1}}{\sinh \left( \frac{2n-1}{2}y \right)}=-\frac{1}{2}}\text{DS}'_{2p,2}\left( y \right).
\end{align*}
Using the relation $\frac{dx}{dy}=-x(1-x)z^2$ (see \cite[P. 120, Entry. 9(i)]{B1991}), we obtain
\begin{align}\label{equ-relation-differ-one}
\text{DS}'_{2p,2}\left( y \right)=2x\left( 1-x \right) z^2\frac{\mathrm{d}}{\mathrm{d}x}S_{2p-1,1}^{\prime}\left( y \right).
\end{align}
Applying Lemma \ref{lem-2,transform}, for $\Omega \left( x,e^{-y},z,z' \right) =0$, we have
$$\Omega \left( 1-x,e^{-\pi ^2/y},yz/\pi ,\frac{1}{\pi}\left( \frac{1}{x\left( 1-x \right) z}-yz' \right) \right) =0.$$
Hence,  \eqref{S'2n+1,1} and \eqref{equ-relation-differ-one} can be rewritten as
\begin{align}\label{equ-lemma3-two}
S_{2p-1,1}^{\prime}\left( \frac{\pi ^2}{y} \right) =\left( \frac{yz}{\pi} \right) ^{2p}\frac{\mathrm{q}_{2p-1}\left( 1-x \right)}{2}\sqrt{1-x}
\end{align}
and
\begin{align}\label{equ-lemma3-three}
\text{DS}'_{2p,2}\left( \frac{\pi ^2}{y} \right)&=2x\left( 1-x \right) \left( \frac{yz}{\pi} \right) ^2\frac{\mathrm{d}}{\mathrm{d}\left( 1-x \right)}S_{2p-1,1}^{\prime}\left( \frac{\pi ^2}{y} \right)\nonumber\\
&=-2x\left( 1-x \right) \left( \frac{yz}{\pi} \right) ^2\frac{\mathrm{d}}{\mathrm{d}x}S_{2p-1,1}^{\prime}\left( \frac{\pi ^2}{y} \right).
\end{align}
Finally, combining \eqref{equ-lemma3-one}, \eqref{S'2n+1,1}, \eqref{equ-lemma3-two} and \eqref{equ-lemma3-three} with elementary calculations yields the desired result.
\end{proof}

Using \emph{Mathematica}, we can easily compute the following using the formulas in Theorem \ref{barC2p,2}.
\begin{cor}\emph{(\cite[Cor. 1]{XuZhao-2024})} We have
\begin{align*}
\bar{C}_{2,2}(y)&=\frac{1}{8}\sqrt{1 - x}xz^{3}[4(x - 1)z'+z],\\
\bar{C}_{4,2}(y)&=\frac{1}{32}\sqrt{1 - x}xz^{5}[8(x^{2}-3x + 2)z'+(3x - 4)z],\\
\bar{C}_{6,2}(y)&=\frac{1}{128}\sqrt{1 - x}xz^{7}[12(x^{3}-17x^{2}+32x - 16)z'+(5x^{2}-52x + 48)z],\\
\bar{C}_{8,2}(y)&=\frac{1}{512}\sqrt{1 - x}xz^{9}\left\{16(x^{4}-139x^{3}+546x^{2}-680x + 272)z'\right.\\
&\quad\quad\quad\quad\left.+ (7x^{3}-696x^{2}+1776x - 1088)z\right\},\\
\bar{C}_{10,2}(y)&=\frac{1}{2048}\sqrt{1 - x}xz^{11}\left\{20(x^{5}-1233x^{4}+10400x^{3}-25040x^{2}+23808x - 7936)z'\right.\\
&\quad\quad\quad\quad\left.+ (9x^{4}-8632x^{3}+53232x^{2}-84288x + 39680)z\right\}.
\end{align*}
\end{cor}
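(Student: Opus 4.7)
The plan is to exploit the reciprocity relation from Lemma \ref{lem-2,C,S} to rewrite $\bar{C}_{2p,2}(y)$ in terms of the ``dual'' series $S'_{2p-1,1}$ and $\mathrm{DS}'_{2p,2}$ evaluated at $\pi^2/y$, then compute those dual series in closed form via Fourier/Maclaurin expansions of $\mathrm{sn}(u)$, and finally use the modular transformation in Lemma \ref{lem-2,transform} to pull the answer back from $\pi^2/y$ to $y$ (equivalently, from $1-x$ to $x$).

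First I would set $\alpha=y$ and $\beta=\pi^2/y$ (so that $\alpha\beta=\pi^2$) in the first identity of Lemma \ref{lem-2,C,S}. Solving for $\bar{C}_{2p,2}(y)$, I obtain
\[
\bar{C}_{2p,2}(y)=\frac{(-1)^p}{y^{2p+1}}\frac{p\pi^{2p}}{2^{2p-2}}\,S'_{2p-1,1}\!\left(\tfrac{\pi^2}{y}\right)-\frac{(-1)^p\pi^{2p+2}}{2^{2p}y^{2p+2}}\,\mathrm{DS}'_{2p,2}\!\left(\tfrac{\pi^2}{y}\right).
\]
The remaining task is therefore to express the two dual quantities in closed form in the variable $x$.

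Next I would compute $S'_{2p-1,1}(y)$ by comparing expansions of $\mathrm{sn}(u)$. From Lemma \ref{sn cofficient symmetry}, the Maclaurin expansion is $\mathrm{sn}(u)=\sum_{n\ge 0}\mathrm{q}_{2n+1}(x)\frac{(-1)^n u^{2n+1}}{(2n+1)!}$. On the other hand the Fourier expansion \eqref{sd} gives $\mathrm{sn}(u)=\frac{2\pi}{Kk}\sum_{n\ge0}\frac{q^{n+1/2}}{1-q^{2n+1}}\sin\!\left[(2n+1)\frac{\pi u}{2K}\right]$; expanding the sine in powers of $u$ and using $\sum_{n\ge0}\frac{q^{n+1/2}(2n+1)^{2j+1}}{1-q^{2n+1}}=\frac{1}{2}S'_{2j+1,1}(y)$ via the geometric series identity $\frac{q^{n+1/2}}{1-q^{2n+1}}=\frac12\,\mathrm{csch}\!\left(\tfrac{(2n+1)y}{2}\right)$, matching coefficients of $u^{2p-1}$ yields
\[
S'_{2p-1,1}(y)=\frac{z^{2p}\sqrt{x}\,\mathrm{q}_{2p-1}(x)}{2}.
\]
Differentiating this identity in $y$ produces $\mathrm{DS}'_{2p,2}(y)=2x(1-x)z^2\,\tfrac{d}{dx}S'_{2p-1,1}(y)$ after invoking Ramanujan's relation $dx/dy=-x(1-x)z^2$.

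Then I would apply the first transformation of Lemma \ref{lem-2,transform}, namely $\Omega(x,e^{-y},z,z')=0\Longrightarrow \Omega(1-x,e^{-\pi^2/y},yz/\pi,\cdot)=0$, which replaces $x\mapsto 1-x$ and $z\mapsto yz/\pi$. Under this substitution, the closed form for $S'_{2p-1,1}$ becomes
\[
S'_{2p-1,1}\!\left(\tfrac{\pi^2}{y}\right)=\left(\tfrac{yz}{\pi}\right)^{2p}\frac{\mathrm{q}_{2p-1}(1-x)\sqrt{1-x}}{2},
\]
and the corresponding expression for $\mathrm{DS}'_{2p,2}(\pi^2/y)$ is obtained by an analogous transformation of $\mathrm{DS}'_{2p,2}(y)$, with the key sign flip $\tfrac{d}{d(1-x)}=-\tfrac{d}{dx}$ picked up when converting the $x$-derivative back to the original variable.

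Finally I would substitute these two transformed expressions into the reciprocity formula from the first step. After cancelling the factors of $\pi^{2p}$, $y^{2p+1}$ and $y^{2p+2}$, the two terms regroup into precisely
\[
(-1)^p\frac{pz^{2p+1}z'x(1-x)}{2^{2p-1}}\mathrm{q}_{2p-1}(1-x)\sqrt{1-x}+(-1)^p\frac{z^{2p+2}x(1-x)}{2^{2p}}\frac{d}{dx}\!\left[\mathrm{q}_{2p-1}(1-x)\sqrt{1-x}\right],
\]
as claimed. The main technical obstacle I expect is the bookkeeping in the third step: carefully extracting the factor $\frac{1}{y^{2p+1}}\left(\tfrac{yz}{\pi}\right)^{2p}$ and ensuring the $z'$ (rather than just $z$) that appears in the first summand is produced correctly from differentiating $\left(\tfrac{yz}{\pi}\right)^{2p}\mathrm{q}_{2p-1}(1-x)\sqrt{1-x}$ under the modular substitution; one has to remember that in the transformed variables $z$ is replaced by $yz/\pi$ whose $x$-derivative involves $z'$, so the two contributions from the $S'$-term and the $\mathrm{DS}'$-term must combine with matching signs to give the $z^{2p+1}z'$ and $z^{2p+2}\tfrac{d}{dx}$ pieces displayed in the theorem. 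The remaining manipulations are routine and can be verified symbolically.
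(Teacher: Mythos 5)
Your proposal is correct and follows essentially the same route as the paper: the paper proves the general formula for $\bar{C}_{2p,2}(y)$ (Theorem \ref{barC2p,2}) by exactly the steps you describe — the reciprocity identity of Lemma \ref{lem-2,C,S}, matching the Fourier and Maclaurin expansions of $\mathrm{sn}(u)$ to get $S'_{2p-1,1}(y)=z^{2p}\sqrt{x}\,\mathrm{q}_{2p-1}(x)/2$, differentiating via $dx/dy=-x(1-x)z^2$, and applying the modular transformation $x\mapsto 1-x$, $z\mapsto yz/\pi$ — and then obtains the five displayed evaluations by specializing $p=1,\dots,5$ symbolically, just as you indicate.
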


\begin{thm}{\label{barS2p,2}}
Let $p\ge 2$ be a positive integer. We have
\begin{align}\label{barS2p3}
\bar{S}_{2p,2}\left( y \right)& =\left( -1 \right) ^{p-1}x\left( 1-x \right) \frac{z^{2p+2}}{2^{2p}}\frac{\mathrm{d}}{\mathrm{d}x}\left\{ \left( 1-x \right) \mathrm{Q}_{2p-2}\left( 1-x \right) \right\}
\nonumber\\
&\quad+2p\left( -1 \right) ^{p-1}\left( 1-x \right) ^2x\frac{z^{2p+1}z'}{2^{2p}}\mathrm{Q}_{2p-2}\left( 1-x \right),
\end{align}
where $\text{Q}_{2p-2}\left( x \right)$ appears in the coefficients of the Maclaurin series of $\mathrm{sn}^2\left( u \right)$.
\end{thm}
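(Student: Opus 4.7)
The proof will mirror, step by step, that of Theorem \ref{barC2p,2}, but applied to the second identity of Lemma \ref{lem-2,C,S}. I would begin by specializing that identity to $\alpha = y$ and $\beta = \pi^2/y$; since $p\ge 2$ forces $\delta_p=0$, this yields
\[
\bar{S}_{2p,2}(y) = \frac{2p(-1)^{p-1}\pi^{2p}}{y^{2p+1}}\,S_{2p-1,1}\!\left(\frac{\pi^2}{y}\right) + \frac{(-1)^p\pi^{2p+2}}{y^{2p+2}}\,\mathrm{DS}_{2p,2}\!\left(\frac{\pi^2}{y}\right),
\]
which reduces the problem to the closed-form evaluation of the hyperbolic series $S_{2p-1,1}$ and $\mathrm{DS}_{2p,2}$, followed by a modular transformation to $\pi^2/y$.

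For $S_{2p-1,1}(y)$ I would exploit the Jacobi zeta function $Z(u)$, which admits both the Fourier expansion
\[
Z(u) = \frac{2\pi}{K}\sum_{n=1}^{\infty}\frac{q^n}{1-q^{2n}}\sin\!\left(\frac{n\pi u}{K}\right) = \frac{\pi}{K}\sum_{n=1}^{\infty}\frac{\sin(n\pi u/K)}{\sinh(ny)}
\]
and the representation $Z(u) = (1-E/K)u - x\int_0^u \mathrm{sn}^2(t)\,dt$ (which follows from $Z'(u) = \mathrm{dn}^2(u) - E/K = 1 - x\,\mathrm{sn}^2(u) - E/K$). Expanding both representations in powers of $u$ and comparing the coefficient of $u^{2p-1}$ for $p\ge 2$ (using the Maclaurin series of $\mathrm{sn}^2(u)$, whose coefficients encode $\mathrm{Q}_{2p-2}(x)$, together with the conversion $\pi/K = 2/z$) gives an identity of the form
\[
S_{2p-1,1}(y) = \frac{x z^{2p}}{2^{2p}}\,\mathrm{Q}_{2p-2}(x),\qquad p\ge 2.
\]
From this, $\mathrm{DS}_{2p,2}(y) = -\frac{d}{dy}S_{2p-1,1}(y) = x(1-x)z^2\,\frac{d}{dx}S_{2p-1,1}(y)$ via the Ramanujan relation $dx/dy = -x(1-x)z^2$.

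Next, I would invoke \eqref{lem-for-one} of Lemma \ref{lem-2,transform}: under the substitution $x\mapsto 1-x$, $z\mapsto yz/\pi$, $z'\mapsto\tfrac{1}{\pi}\bigl(\tfrac{1}{x(1-x)z}-yz'\bigr)$, the identity for $S_{2p-1,1}$ transports to $S_{2p-1,1}(\pi^2/y) = \frac{(1-x)(yz/\pi)^{2p}}{2^{2p}}\mathrm{Q}_{2p-2}(1-x)$, and the chain rule in the new variables gives $\mathrm{DS}_{2p,2}(\pi^2/y) = -x(1-x)(yz/\pi)^2\,\frac{d}{dx}S_{2p-1,1}(\pi^2/y)$. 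Substituting both into the identity from Step 1 and distributing the transformed $z'$-piece produces two $1/y$-contributions, one from the explicit $S_{2p-1,1}$ term and one from the $\tfrac{1}{x(1-x)z}$ piece of the transformed $z'$, which cancel exactly, mirroring the cancellation in the proof of Theorem \ref{barC2p,2}. The surviving terms organize as
\[
\bar{S}_{2p,2}(y) = \frac{(-1)^p x(1-x)z^{2p+2}}{2^{2p}}\!\bigl[\mathrm{Q}_{2p-2}(1-x) + (1-x)\mathrm{Q}'_{2p-2}(1-x)\bigr] - \frac{(-1)^p\cdot 2p\,x(1-x)^2 z^{2p+1}z'}{2^{2p}}\mathrm{Q}_{2p-2}(1-x),
\]
and the elementary identity $\mathrm{Q}_{2p-2}(1-x) + (1-x)\mathrm{Q}'_{2p-2}(1-x) = -\frac{d}{dx}\{(1-x)\mathrm{Q}_{2p-2}(1-x)\}$ converts this into the stated form.

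The main obstacle is Step 2: identifying the correct Fourier-Maclaurin matching for $S_{2p-1,1}$. The Jacobi zeta function is the natural object (since $q^n/(1-q^{2n}) = 1/(2\sinh(ny))$), but the matching passes through $\int_0^u \mathrm{sn}^2(t)\,dt$, which shifts indices by one and introduces the constant term $(1-E/K)u$ that must be matched with the $p=1$ Fourier coefficient separately. Once the key identity for $S_{2p-1,1}$ is in place with the correct normalization of $\mathrm{Q}_{2p-2}$, the differentiation, the transformation, and the $1/y$-cancellation are direct imitations of the corresponding steps in Theorem \ref{barC2p,2}.
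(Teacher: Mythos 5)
Your proposal is correct and follows essentially the same route as the paper: it starts from the second identity of Lemma \ref{lem-2,C,S}, establishes the key evaluation $S_{2p-1,1}(y)=\frac{xz^{2p}}{2^{2p}}\mathrm{Q}_{2p-2}(x)$ by matching Fourier and Maclaurin expansions, obtains $\mathrm{DS}_{2p,2}$ by differentiating via $dx/dy=-x(1-x)z^2$, and transports everything to $\pi^2/y$ with \eqref{lem-for-one}, with the same cancellation of the $1/y$-terms. The only cosmetic difference is that you derive the $S_{2p-1,1}$ identity from the Jacobi zeta function $Z(u)$, whereas the paper expands Greenhill's Fourier series of $\mathrm{sn}^2(u)$ directly; these are equivalent up to one integration.
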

\begin{proof}
According to Lemma \ref{lem-2,C,S}, we have
\begin{align}\label{thm5.3-proof-begin-1}
\bar{S}_{2p,2}\left( y \right) =\frac{2p\left( -1 \right) ^{p-1}\pi ^{2p}}{y^{2p+1}}S_{2p-1,1}\left( \frac{\pi ^2}{y} \right) +\left( -1 \right) ^p\frac{\pi ^{2p+2}}{y^{2p+2}}\text{DS}_{2p,2}\left( \frac{\pi ^2}{y} \right).
\end{align}
From \cite{Greenhill1892}, we have
\begin{align*}
\mathrm{sn}^{2}u&=\left(1 - \frac{E}{K}\right)\frac{1}{k^{2}} - \frac{\pi^{2}}{K^{2}k^{2}}\sum_{n = 1}^{\infty}\frac{n\cos(n\pi u / K)}{\sinh(ny)}\\
&=\left(1 - \frac{E}{K}\right)\frac{1}{k^{2}} - \frac{\pi^{2}}{K^{2}k^{2}}\sum_{j = 0}^{\infty}\frac{(-1)^{j}(\pi u / K)^{2j}}{(2j)!}S_{2j + 1,1}(y),
\end{align*}
where $E$ is the \emph{complete elliptic integrals of the second kinds}. Applying \eqref{sd-exped-one}, we deduce the power series expansion of the function $\mathrm{sn}^2 (u)$
\begin{align*}
\mathrm{sn}^2u&=\left( \sum_{n=0}^{\infty}{\mathrm{q}_{2n+1}\left( x \right) \frac{\left( -1 \right) ^nu^{2n+1}}{\left( 2n+1 \right) !}} \right) ^2
\\
&=\sum_{n=0}^{\infty}{\sum_{j=0}^n{\left( \begin{array}{c}
			2n+2\\
			2j+1\\
		\end{array} \right) \mathrm{q}_{2j+1}\left( x \right) \mathrm{q}_{2n-2j+1}\left( x \right)}}\frac{\left( -1 \right) ^nu^{2n+2}}{\left( 2n+2 \right) !}=\sum_{n=0}^{\infty}{\mathrm{Q}_{2n+2}\left( x \right)}\frac{\left( -1 \right) ^nu^{2n+2}}{\left( 2n+2 \right) !}.
\end{align*}
Comparing the coefficients of $u^{2p-2}$ yields
\begin{align}\label{S_2p-1}
S_{2p-1,1}\left( y \right) =\frac{z^{2p}x}{2^{2p}}\mathrm{Q}_{2p-2}\left( x \right)\quad (p=2,3,4,\ldots).
\end{align}
Differentiating \eqref{S_2p-1}   with respect to $y$ gives
\begin{align}
 \text{DS}_{2p,2}\left( y \right)&=-\frac{\mathrm{d}}{\mathrm{d}y}S_{2p-1,1}\left( y \right) =-\frac{\mathrm{d}}{\mathrm{d}y}\left[ \frac{z^{2p}x}{2^{2p}}\mathrm{Q}_{2p-2}\left( x \right) \right]
\nonumber\\
 &=-\frac{\mathrm{d}x}{\mathrm{d}y}\frac{\mathrm{d}}{\mathrm{d}x}\left[ \frac{z^{2p}x}{2^{2p}}\mathrm{Q}_{2p-2}\left( x \right) \right] =x\left( 1-x \right) z^2\frac{\mathrm{d}}{\mathrm{d}x}\left[ \frac{z^{2p}x}{2^{2p}}\mathrm{Q}_{2p-2}\left( x \right) \right]\nonumber
\\
 &=x\left( 1-x \right) \frac{z^{2p+2}}{2^{2p}}\frac{\mathrm{d}}{\mathrm{d}x}\left[ x\mathrm{Q}_{2p-2}\left( x \right) \right] +2px^2\left( 1-x \right) \frac{z^{2p+1}z'}{2^{2p}}\mathrm{Q}_{2p-2}\left( x \right).\label{thm-prof-series-two}
 	\end{align}
Applying the transformation
\begin{align}\label{pi^2/y}
  \Omega \left( 1-x,e^{-\pi ^2/y},yz/\pi ,\frac{1}{\pi}\left( \frac{1}{x\left( 1-x \right) z}-yz' \right) \right) =0,	
\end{align}
by direct calculations, we obtain
\begin{align}\label{thm5.3-proof-final-one}
S_{2p-1,1}\left( \frac{\pi ^2}{y} \right) =\frac{y^{2p}z^{2p}\left( 1-x \right)}{2^{2p}\pi ^{2p}}\mathrm{Q}_{2p-2}\left( 1-x \right)
\end{align}
and
\begin{align}\label{thm5.3-proof-final-two}
&\text{DS}_{2p,2}\left( \frac{\pi ^2}{y} \right)\nonumber
=-x\left( 1-x \right) \frac{y^{2p+2}z^{2p+2}}{2^{2p}\pi ^{2p+2}}\frac{\mathrm{d}}{\mathrm{d}x}\left\{ \left( 1-x \right) \mathrm{Q}_{2p-2}\left( 1-x \right) \right\}
\nonumber\\
&\quad+2p\left( 1-x \right) ^2x\frac{y^{2p+1}z^{2p+1}}{2^{2p}\pi ^{2p+2}}\left[ \frac{1}{x\left( 1-x \right) z}-yz' \right] \mathrm{Q}_{2p-2}\left( 1-x \right)
\nonumber\\
&=-x\left( 1-x \right) \frac{y^{2p+2}z^{2p+2}}{2^{2p}\pi ^{2p+2}}\frac{\mathrm{d}}{\mathrm{d}x}\left\{ \left( 1-x \right) \mathrm{Q}_{2p-2}\left( 1-x \right) \right\}
\nonumber\\
&\quad+2p\left( 1-x \right) \frac{y^{2p+1}z^{2p}}{2^{2p}\pi ^{2p+2}}\mathrm{Q}_{2p-2}\left( 1-x \right) +2p\left( 1-x \right) ^2x\frac{y^{2p+2}z^{2p+1}z'}{2^{2p}\pi ^{2p+2}}\mathrm{Q}_{2p-2}\left( 1-x \right).
\end{align}
Finally, substituting \eqref{thm5.3-proof-final-one} and \eqref{thm5.3-proof-final-two} into \eqref{thm5.3-proof-begin-1} yields the desired evaluation with a direct calculation.
\end{proof}

Setting $p=2,3,4,5$ in \eqref{barS2p3} with the help of \emph{Mathematica}, we get the following evaluations.
\begin{cor}\emph{(\cite[Cor.1]{XuZhao-2024})} We have
\begin{align*}
\bar{S}_{4,2}(y)&=\frac{1}{8}x(1 - x)z^{5}[z - 4(1 - x)z'],\\
\bar{S}_{6,2}(y)&=\frac{1}{8}x(1 - x)z^{7}[6(1 - x)(2 - x)z'+(2x - 3)z],\\
\bar{S}_{8,2}(y)&=\frac{1}{8}x(1 - x)z^{9}[4(2x^{3}-19x^{2}+34x - 17)z'+(3x^{2}-19x + 17)z],\\
\bar{S}_{10,2}(y)&=\frac{1}{8}x(1-x)z^{11}\left\{ \begin{array}{c}
10(x^4-34x^3+126x^2-155x+62)z'\\
+(4x^3-102x^2+252x-155)z\\
\end{array} \right\} .
\end{align*}
\end{cor}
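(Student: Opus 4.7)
The plan is to specialize Theorem~\ref{barS2p,2} to $p=2,3,4,5$. The only inputs beyond the formula \eqref{barS2p3} itself are the four polynomials $\mathrm{Q}_2,\mathrm{Q}_4,\mathrm{Q}_6,\mathrm{Q}_8$, which I will extract from the Maclaurin series of $\mathrm{sn}^2(u)$.

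First, I would invoke the convolution already derived inside the proof of Theorem~\ref{barS2p,2},
\[
\mathrm{Q}_{2n+2}(x)=\sum_{j=0}^{n}\binom{2n+2}{2j+1}\mathrm{q}_{2j+1}(x)\,\mathrm{q}_{2n-2j+1}(x),
\]
together with the classical Maclaurin coefficients $\mathrm{q}_1=1$, $\mathrm{q}_3(x)=1+x$, $\mathrm{q}_5(x)=1+14x+x^2$, and $\mathrm{q}_7(x)=1+135x+135x^2+x^3$ of $\mathrm{sn}(u)$. This immediately gives explicit expressions, for example $\mathrm{Q}_2(x)=2$ and $\mathrm{Q}_4(x)=8(1+x)$, with analogous but longer formulas for $\mathrm{Q}_6$ and $\mathrm{Q}_8$.

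Second, for each $p\in\{2,3,4,5\}$ I would substitute $\mathrm{Q}_{2p-2}(1-x)$ into \eqref{barS2p3}, compute the derivative $\frac{\mathrm{d}}{\mathrm{d}x}\{(1-x)\mathrm{Q}_{2p-2}(1-x)\}$ via the product and chain rules, multiply by the two prefactors $(-1)^{p-1}x(1-x)z^{2p+2}/2^{2p}$ and $2p(-1)^{p-1}(1-x)^2 x z^{2p+1}z'/2^{2p}$, and finally collect the resulting $z$-term and $z'$-term. As a quick sanity check, for $p=2$ one has $\frac{\mathrm{d}}{\mathrm{d}x}\{2(1-x)\}=-2$, and the formula immediately reproduces $\bar{S}_{4,2}(y)=\tfrac{1}{8}x(1-x)z^5[z-4(1-x)z']$, matching the first line of the statement.

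There is no conceptual obstacle here: once Theorem~\ref{barS2p,2} is in hand, the corollary reduces to purely mechanical polynomial algebra, which is exactly why the author defers to \emph{Mathematica}. The only care needed is to keep straight the overall sign $(-1)^{p-1}$, the two prefactors above, and the chain-rule sign produced by the substitution $x\mapsto 1-x$ inside $\mathrm{Q}_{2p-2}$. The polynomial bookkeeping grows noticeably for $p=4,5$, where $\mathrm{Q}_6$ and $\mathrm{Q}_8$ each carry several terms and considerable cancellation occurs before the compact form in the statement emerges, but the manipulation remains entirely routine.
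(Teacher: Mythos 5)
Your proposal is correct and follows exactly the paper's route: the paper obtains these four identities by setting $p=2,3,4,5$ in \eqref{barS2p3} of Theorem \ref{barS2p,2} and carrying out the polynomial algebra with \emph{Mathematica}, which is precisely your plan (and your $p=2$ sanity check reproduces the first line correctly).
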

From \cite[Cor.1]{XuZhao-2024}, we also obtain
$$\bar{S}_{2,2}(y)=\frac{1}{8}x(1-x)z^2\left\{ 4x(1-x)\left( z' \right) ^2+4\left( 1-x \right) z'z-z^2 \right\}.$$

\section{Evaluations of Mixed Berndt-type Integrals}
We assume that $x_0, y_0$ and $z_0$ are all known. For $\Omega \left( x_0,\exp \left( -y_0 \right) ,z_0,z_0' \right) =0$, considering the transformation \eqref{lem-for-trans-3}
$$
\Omega \left( \left( \frac{1-\sqrt{1-x}}{1+\sqrt{1-x}} \right) ^2,\exp \left( -2y \right) ,\frac{z\left( 1+\sqrt{1-x} \right)}{2},\frac{\left( 1+\sqrt{1-x} \right) ^3}{4\left( 1-\sqrt{1-x} \right)}\left( \left( \sqrt{1-x}+1-x \right) z'-\frac{z}{2} \right) \right) =0,
$$
by an elementary calculation, we can deduce its inverse transformation
\begin{align}\label{trsform}
\begin{cases}
x=\frac{4\sqrt{x_0}}{1+x_0+2\sqrt{x_0}},\\
y=\frac{y_0}{2},\\
z=z_0\left( 1+\sqrt{x_0} \right) ,\\
z'=\sqrt{x_0}\frac{\left( 1+\sqrt{x_0} \right) ^5}{1-x_0}\frac{z_0'}{2}+\frac{\left( 1+\sqrt{x_0} \right) ^4}{1-x_0}\frac{z_0}{4}.
\end{cases}
\end{align}

\begin{thm}\label{mixInt1}
Let $m\in \mathbb{N}$ and $\Gamma=\Gamma(1/4)$. Then
\begin{align}
&\int_0^{\infty}{\frac{x^{4m}\left( \sinh x-\sin x \right) \mathrm{d}x}{\left[ \sinh ^2x+\sin ^2x \right] \left[ \cosh x+\cos x \right]}}= c_{1,m} \frac{\Gamma ^{8m}}{\pi ^{2m}}+ c_{2,m}\frac{\Gamma ^{8m+4}}{\pi ^{2m+2}},
\\
&\int_0^{\infty}{\frac{x^{4m-2}\left( \sin x+\sinh x \right) \mathrm{d}x}{\left[ \sinh ^2x+\sin ^2x \right] \left[ \cosh x+\cos x \right]}}= d_{1,m} \frac{\Gamma ^{8m-4}}{\pi ^{2m-1}}+d_{2,m} \frac{\Gamma ^{8m}}{\pi ^{2m+1}},
	\end{align}
	where  numbers $c_{1,m},c_{2,m},d_{1,m},d_{2,m}\in \mathbb{Q}$.
\end{thm}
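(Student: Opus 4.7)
The plan is to reduce each mixed Berndt-type integral to a scalar multiple of a Ramanujan-type hyperbolic series at a specific modular parameter, evaluate that series via the Jacobi-elliptic expansions of Section~4, and transport the result to the lemniscatic base point $(x_0,y_0)=(1/2,\pi)$, where $K$ and $dK/dx$ have known closed forms in $\Gamma$ and $\pi$. First I would invoke Corollary~3.5. Taking $\mu=2m$ in its first identity (where $(-1)^\mu=1$) makes its left-hand side collapse to $(1+i)\,B^{-}_{+}(4m)$, which gives
\begin{align*}
B^{-}_{+}(4m)=\frac{(-1)^{m+1}}{2^{2m+1}}\,\pi^{4m+1}\,\bar{C}_{4m,2}\!\left(\tfrac{\pi}{2}\right);
\end{align*}
taking $\mu=2m-1$ in its third identity (where $(-1)^\mu=-1$) makes its left-hand side collapse to $(1+i)\,B^{+}_{+}(4m-2)$, which gives
\begin{align*}
B^{+}_{+}(4m-2)=\frac{(-1)^m}{2^{2m-2}}\,\pi^{4m-2}\bigl[(4m-2)\,S'_{4m-3,1}(2\pi)-\pi\,\mathrm{DS}'_{4m-2,2}(2\pi)\bigr].
\end{align*}
Thus the task reduces to evaluating $\bar{C}_{4m,2}(\pi/2)$, $S'_{4m-3,1}(2\pi)$ and $\mathrm{DS}'_{4m-2,2}(2\pi)$.

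Next I would transport these to the lemniscatic point via the third transformation of Lemma~2.2 together with its inverse displayed at the start of Section~5. Writing $\sigma:=1+\sqrt 2$ and $\bar\sigma:=\sqrt 2-1=\sigma^{-1}$, one obtains at $y=\pi/2$ the values $x=1-\bar\sigma^4,\ \sqrt{1-x}=\bar\sigma^2,\ z=z_0\sigma/\sqrt 2,\ z'=(\sigma^5 z_0'+\sigma^4 z_0)/8$, and at $y=2\pi$ the values $x=\bar\sigma^4,\ \sqrt x=\bar\sigma^2,\ z=z_0\sigma/(2\sqrt 2)$. Combined with the standard evaluations $z_0=\Gamma^2/(2\pi^{3/2})$ and $z_0 z_0'=2/\pi$ (the latter from $dz/dx|_{x=1/2}=4\sqrt\pi/\Gamma^2$), substitution into Theorem~4.1 and into formula~\eqref{S'2n+1,1} for $S'_{2p-1,1}$ (whence $\mathrm{DS}'_{2p,2}=2x(1-x)z^2\,(d/dx)S'_{2p-1,1}$) expresses each of $\bar{C}_{4m,2}(\pi/2)$, $S'_{4m-3,1}(2\pi)$ and $\mathrm{DS}'_{4m-2,2}(2\pi)$ as a $\Q(\sqrt 2)$-linear combination of the appropriate pair of monomials in $z_0$ and $z_0 z_0'=2/\pi$. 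Using $z_0^{2k}=\Gamma^{4k}/(2^{2k}\pi^{3k})$ and multiplying through by the leading $\pi$-power then produces, respectively, a $\Q(\sqrt 2)$-combination of $\Gamma^{8m}/\pi^{2m}$ with $\Gamma^{8m+4}/\pi^{2m+2}$, and a $\Q(\sqrt 2)$-combination of $\Gamma^{8m-4}/\pi^{2m-1}$ with $\Gamma^{8m}/\pi^{2m+1}$.

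The main obstacle will be upgrading the coefficients from $\Q(\sqrt 2)$ to $\Q$. The essential inputs are the palindromic structure of $\mathrm{q}_{2n+1}(x)$ (Lemma~2.6) and the integrality Lemma~2.7. At each target parameter, $x$ (or $1-x$) equals $\bar\sigma^4$, whose reciprocal $\sigma^4$ satisfies $\sigma^4+\bar\sigma^4=34\in\Z$. Writing the even-degree palindrome $\mathrm{q}_{4m-3}(x)=x^{m-1}\,\widetilde{q}(x+1/x)$ and the odd-degree palindrome $\mathrm{q}_{4m-1}(x)=(x+1)\,x^{m-1}\,\widetilde{r}(x+1/x)$ (the factor $x+1$ arising because odd-degree palindromes vanish at $-1$) collapses their values at $x=\bar\sigma^{\pm 4}$ to a power of $\bar\sigma$ times a rational number. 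The surplus $\sigma^{\bullet}\bar\sigma^{\bullet}$ factors contributed by the powers of $z$ and $z'$ then reduce via $\sigma\bar\sigma=1$ to combinations of the form $\sigma^a\pm\sigma^{-a}$, which by Lemma~2.7 are integers or $\sqrt 2$ times integers. Arranging the two Jacobi-elliptic summands in Theorem~4.1 (and in the $\mathrm{DS}'$-analogue derived above) so that their $\sqrt 2$-pieces match with opposite signs and cancel then secures $c_{1,m},c_{2,m},d_{1,m},d_{2,m}\in\Q$; the low-$m$ evaluations listed in Corollary~4.2 allow an explicit check of these cancellations and produce the initial numerators.
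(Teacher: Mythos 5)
Your proposal is correct and follows the paper's strategy in all essentials: reduce the two integrals by the contour-integral corollaries to hyperbolic sums, evaluate those sums through the Maclaurin and Fourier expansions of $\mathrm{sn}$, transport to the lemniscatic point $x_0=1/2$ via the inverse Landen transformation \eqref{trsform}, extract rationality from the palindromic symmetry $a_j=a_{p-1-j}$ of Lemma \ref{sn cofficient symmetry} combined with Lemma \ref{either an integer or an integer}, and finish with $z_0=\Gamma^2/(2\pi^{3/2})$, $z_0'=4\sqrt{\pi}/\Gamma^2$. The one routing difference is in the second integral: the paper stays with Theorem \ref{thm1cosh++} at $a=4m-2$, obtaining $(-1)^{m-1}\pi^{4m-1}4^{-m}\bar{C}_{4m-2,2}(\pi/2)$ and then invoking Theorem \ref{barC2p,2}, whereas you enter through the other contour and land directly on $(4m-2)S'_{4m-3,1}(2\pi)-\pi\,\mathrm{DS}'_{4m-2,2}(2\pi)$; by Lemma \ref{lem-2,C,S} these are the same object (the paper records $\pi\bar{C}_{4m-2,2}(\pi/2)=4[\pi\mathrm{DS}'_{4m-2,2}(2\pi)-(4m-2)S'_{4m-3,1}(2\pi)]$ as a corollary in Section 5), so nothing is gained or lost, and both of your prefactors $\tfrac{(-1)^{m+1}}{2^{2m+1}}\pi^{4m+1}$ and $\tfrac{(-1)^m}{2^{2m-2}}\pi^{4m-2}$ check out against \eqref{4m-2,4m1} and \eqref{I-S}. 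Your repackaging of the palindrome as $x^{m-1}\widetilde q(x+1/x)$ with $x+1/x=34$ at $x=(\sqrt2-1)^4$, plus the factor $1+x=6(\sqrt2-1)^2$ in the odd-degree case, is a clean equivalent of the paper's term-by-term pairing $j\leftrightarrow p-1-j$. The only substantive step you assert rather than execute is that the residual $\sqrt2$'s cancel between the two summands and inside the $z'$-contribution $(\sigma^5z_0'+\sigma^4z_0)/8$; this is precisely the computation carried out in \eqref{Q1}--\eqref{Q2}, and since your mechanism (Lemma \ref{either an integer or an integer} applied to $\sigma^a\pm\sigma^{-a}$) is the one that makes it work, the remaining gap is computational rather than conceptual.
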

\begin{proof}
According to  Lemma \ref{Taylor sn},  we know that $\text{q}_{2p-1}(x)=\sum^{p-1}_{j=0}a_jx^j$.
Applying Theorem \ref{barC2p,2} and transformation \eqref{trsform},  one obtains
\begin{align}
&\bar{C}_{2p,2}\left( y\right)=\bar{C}_{2p,2}\left( \frac{y_0}{2} \right)\nonumber\\
& =\left( -1 \right) ^p\frac{px(1-x)}{2^{2p-1}}\mathrm{q}_{2p-1}\left( 1-x \right) \sqrt{1-x}z^{2p+1}z'
	+\left( -1 \right) ^p\frac{x(1-x)}{2^{2p}}\frac{\mathrm{d}}{\mathrm{d}x}\left[ \mathrm{q}_{2p-1}\left( 1-x \right) \sqrt{1-x} \right] z^{2p+2}
		\nonumber\\
		&=
		\left( -1 \right) ^p\frac{p\sqrt{x_0}\left( 1-\sqrt{x_0} \right) ^3}{2^{2p-3}\left( 1+\sqrt{x_0} \right) ^{4-2p}}\mathrm{q}_{2p-1}\left( \frac{\left( 1-\sqrt{x_0} \right) ^2}{\left( 1+\sqrt{x_0} \right) ^2} \right) z_{0}^{2p+1}
\left\{ \sqrt{x_0}\frac{\left( 1+\sqrt{x_0} \right) ^5}{1-x_0}\frac{z_0'}{2}+\frac{\left( 1+\sqrt{x_0} \right) ^4}{1-x_0}\frac{z_0}{4} \right\}
		\nonumber\\
		&\quad -\frac{\left( -1 \right) ^p}{2^{2p-2}}\frac{\sqrt{x_0}\left( 1-\sqrt{x_0} \right) ^2}{\left( 1+\sqrt{x_0} \right) ^4}\sum_{j=0}^{p-1}{a_j}\left( j+\frac{1}{2} \right) \left( \frac{1-\sqrt{x_0}}{1+\sqrt{x_0}} \right) ^{2j-1}\left( 1+\sqrt{x_0} \right) ^{2p+2}z_{0}^{2p+2}
		\nonumber\\
		&=\left( -1 \right) ^p\frac{px_0\left( 1-x_0 \right) ^2}{2^{2p-2}}\left( 1+\sqrt{x_0} \right) ^{2p-2}\mathrm{q}_{2p-1}\left( \left( \frac{1-\sqrt{x_0}}{1+\sqrt{x_0}} \right) ^2 \right) z_{0}^{2p+1}z_0'
		\nonumber\\
		&\quad
		-\frac{\left( -1 \right) ^p}{2^{2p-2}}(1-x_0)^2\sqrt{x_0}\sum_{j=0}^{p-1}{a_j}\left( j+\frac{1}{2} \right) \left( \frac{1-\sqrt{x_0}}{1+\sqrt{x_0}} \right) ^{2j-1}\left( 1+\sqrt{x_0} \right) ^{2p-4}z_{0}^{2p+2}
			\nonumber\\
		&\quad
		+\frac{\left( -1 \right) ^p}{2^{2p-2}}(1-x_0)^2\frac{p}{2}\sqrt{x_0}\mathrm{q}_{2p-1}\left( \frac{\left( 1-\sqrt{x_0} \right) ^2}{\left( 1+\sqrt{x_0} \right) ^2} \right) \left( 1+\sqrt{x_0} \right) ^{2p-3}z_{0}^{2p+2}.
		\label{cbar(y/2)}
	\end{align}
Setting $x_0=1/2$ in \eqref{cbar(y/2)}  yields  $y_0=\pi$ via \eqref{notations-Ramanujan}. All coefficients preceding $z_{0}^{2p+2}$ and $z_{0}^{2p+1}z_0'$ are  rational numbers.  When $x_0=1/2$, the coefficient of $z_{0}^{2p+1}z_0'$ is
	\begin{align}
		\left( 1+\frac{\sqrt{2}}{2} \right) ^{2p-2}\frac{\left( -1 \right) ^pp}{2^{2p+1}}\mathrm{q}_{2p-1}\left( \left( \frac{\sqrt{2}-1}{\sqrt{2}+1} \right) ^2 \right) 	\nonumber\\=\frac{\left( -1 \right) ^pp}{2^{3p}}\left( \sqrt{2}+1 \right) ^{2p-2}\sum_{j=0}^{p-1}{a_j\left( \frac{\sqrt{2}-1}{\sqrt{2}+1} \right) ^{2j}}.
	\end{align}
In fact, Lemma \ref{sn cofficient symmetry} indicates  that $a_j=a_{p-1-j}$ for $ j=0,\ldots,p-1$. From \eqref{even integer}, we can directly conclude that
\begin{align}
		&\left( \sqrt{2}+1 \right) ^{2p-2}\left\{ a_j\left( \frac{\sqrt{2}-1}{\sqrt{2}+1} \right) ^{2j}+a_{p-1-j}\left( \frac{\sqrt{2}-1}{\sqrt{2}+1} \right) ^{2\left( p-1-j \right)} \right\}
		\nonumber\\
		&
		=a_j\left\{ \left( \sqrt{2}-1 \right) ^{4j-2p+2}+\left( \sqrt{2}+1 \right) ^{2p-2}\left( \sqrt{2}-1 \right) ^{4\left( p-1-j \right)} \right\}
		\nonumber\\
		&
		=a_j\left\{ \left( \sqrt{2}-1 \right) ^{4j-2p+2}+\left( \sqrt{2}+1 \right) ^{4j-2p+2} \right\} \in \mathbb{Z}, \quad (j=1,\ldots,[(p-1)/2]).
			\end{align}
		%\nonumber\\
		%&=a_j\sum_{l=0}^{4j+2p-2}{\left( \begin{array}{c}
		%		4j+2p-2\\
		%		l\\
		%	\end{array} \right)}\left[ \left( -1 \right) ^l+1 \right] 2^{2j+p-1-\tfrac{l}{2}}
		%\nonumber\\
		%&=2a_j\sum_{m=0}^{2j+p-1}{\left( \begin{array}{c}
		%		4j+2p-2\\
		%		2m\\
		%	\end{array} \right)}2^{2j+p-1-m}\in \mathbb{Q} \quad (j=1,\ldots,[(p-1)/2]).
where $ [\cdot ]$ is the floor function. Further, for $j=0,\ldots,p-1$, we obtain
\begin{align}\label{Q1}
\frac{\left( -1 \right) ^pp}{2^{3p}}\left( \sqrt{2}+1 \right) ^{2p-2}\sum_{j=0}^{p-1}{a_j\left( \frac{\sqrt{2}-1}{\sqrt{2}+1} \right) ^{2j}}\in \mathbb{Q}.
\end{align}
Similarly, when $x_0=1/2$, the coefficient of $z_{0}^{2p+2}$ is
\begin{align}
&\quad\frac{\left( -1 \right) ^p}{2^{2p+1}}\sqrt{2}\left\{ \begin{array}{c}
\frac{p}{2}\mathrm{q}_{2p-1}\left( \left( \frac{\sqrt{2}-1}{\sqrt{2}+1} \right) ^2 \right) \left( 1+\frac{\sqrt{2}}{2} \right) ^{2p-3}\\
-\sum_{j=0}^{p-1}{a_j}\left( j+\frac{1}{2} \right) \left( \frac{\sqrt{2}-1}{\sqrt{2}+1} \right) ^{2j-1}\left( 1+\frac{\sqrt{2}}{2} \right) ^{2p-4}\\
\end{array} \right\}
\nonumber\\
&\quad \quad =
\frac{\left( -1 \right) ^p}{2^{3p+1}}\sqrt{2}\left( 1+\sqrt{2} \right) ^{2p-4}\sum_{j=0}^{p-1}{a_j}\left\{ \begin{array}{c}
\sqrt{2}p\left( 1+\sqrt{2} \right) \left( \frac{\sqrt{2}-1}{\sqrt{2}+1} \right) ^{2j}\\
-\left(4j+2 \right) \left( \frac{\sqrt{2}-1}{\sqrt{2}+1} \right) ^{2j-1}\\
\end{array} \right\}
\nonumber\\
&\quad \quad=
\frac{\left( -1 \right) ^p}{2^{3p+1}}\sqrt{2}\left( 1+\sqrt{2} \right) ^{2p-4}\sum_{j=0}^{p-1}{a_j}\left\{ \left( 2-\sqrt{2} \right) p-\left( 4j+2 \right) \right\} \left( \sqrt{2}-1 \right) ^{4j-2}.
\end{align}
According to \eqref{odd integer}, \eqref{even integer} and $a_j=a_{p-1-j},j=0,\ldots,p-1$, we have
\begin{align}
&\sqrt{2}\left( 1+\sqrt{2} \right) ^{2p-4}\left\{ \begin{array}{c}
	a_j\left\{ \left( 2-\sqrt{2} \right) p-2\left( 2j+1 \right) \right\} \left( \sqrt{2}-1 \right) ^{4j-2}+\\
	a_{p-1-j}\left\{ \left( 2-\sqrt{2} \right) p-2\left( 2p-2j-1 \right) \right\} \left( \sqrt{2}-1 \right) ^{4p-4j-6}\\
\end{array} \right\} 
\nonumber\\
&
=a_j\left\{ \begin{array}{c}
	\left\{ 2\left( \sqrt{2}-1 \right) p-2\sqrt{2}\left( 2j+1 \right) \right\} \left( \sqrt{2}+1 \right) ^{2p-4j-2}\\
	+\left\{ 2\sqrt{2}\left( 2j+1 \right) -2\left( \sqrt{2}+1 \right) p \right\} \left( \sqrt{2}-1 \right) ^{2p-4j-2}\\
\end{array} \right\} 
\nonumber\\
&
=2a_j\left\{ \begin{array}{c}
	\sqrt{2}\left( 2j+1 \right) \left\{ \left( \sqrt{2}-1 \right) ^{2p-4j-2}-\left( \sqrt{2}+1 \right) ^{2p-4j-2} \right\}\\
	+p\left\{ \left( \sqrt{2}+1 \right) ^{2p-4j-3}-\left( \sqrt{2}-1 \right) ^{2p-4j-3} \right\}\\
\end{array} \right\} \in \mathbb{Z}, 
\label{Q2}
\end{align}
where $j=1,\ldots,[(p-1)/2]$. Combining \eqref{Q1} and \eqref{Q2} gives
\begin{align} \label{barCQ}
\bar{C}_{2p,2}\left( \frac{\pi}{2} \right) \in \mathbb{Q} z_{0}^{2p+1}z_{0}^{\prime}+\mathbb{Q} z_{0}^{2p+2}.
\end{align}
We note the $n$-th derivative of $z$, as given in \eqref{notations-Ramanujan}, is
\begin{align}\label{n-th derivative}
\frac{\mathrm{d}^nz}{\mathrm{d}x^n}=\frac{\left( 1/2 \right) _{n}^{2}}{n!} \,_{2}F_1\left( \frac{1}{2}+n,\frac{1}{2}+n,1+n;x \right).
\end{align}
To evaluate this derivative at $x_0=\frac{1}{2}$, we
apply the known identity (\cite{A2000})
\begin{align}\label{known identity}
\,_{2}F_1\left( a,b,\frac{a+b+1}{2};\frac{1}{2} \right) =\frac{\Gamma \left( \frac{1}{2} \right) \Gamma \left( \frac{a+b+1}{2} \right)}{\Gamma \left( \frac{a+1}{2} \right) \Gamma \left( \frac{b+1}{2} \right)}.
\end{align}
By substituting $a=b=\frac{1}{2}+n$ into \eqref{known identity} and setting \(x_0 = \frac{1}{2}\) in \eqref{n-th derivative}, we obtain
\begin{align}\label{x=1/2,n-th}
\frac{d^nz}{dx^n}\mid_{x=x_0}^{}=\frac{\left( 1/2 \right) _{n}^{2}\sqrt{\pi}}{\Gamma ^2\left( \frac{n}{2}+\frac{3}{4} \right)}.
\end{align} Thus, we evaluate $y_0=\pi, z_0=\frac{\Gamma ^2}{2\pi ^{3/2}}$, $z_0'=\frac{4\pi ^{1/2}}{\Gamma ^2}$ by  \eqref{notations-Ramanujan} and \eqref{x=1/2,n-th} , then substituting them into \eqref{barCQ} yields
\begin{align} \label{bar2p}
\bar{C}_{2p,2}\left( \frac{\pi}{2} \right) \in \mathbb{Q} \frac{\Gamma ^{4p}}{\pi ^{3p+1}}+\mathbb{Q} \frac{\Gamma ^{4p+4}}{\pi ^{3p+3}}.
\end{align}
In Theorem \ref{thm1cosh++}, letting  $a=4m-2$ and $4m$, we have
\begin{align} \label{4m-2,4m}
\int_0^{\infty}{\frac{x^{4m-2}\left( \sinh x+\sin x \right) \mathrm{d}x}{\left[ \sinh ^2x+\sin ^2x \right] \left[ \cosh x+\cos x \right]}}=
\left( -1 \right) ^{m-1}\frac{\pi ^{4m-1}}{4^m}\bar{C}_{4m-2,2}\left( \frac{\pi}{2} \right)
,\\
\int_0^{\infty}{\frac{x^{4m}\left( \sinh x-\sin x \right) \mathrm{d}x}{\left[ \sinh^2x+\sin ^2x \right] \left[ \cosh x+\cos x \right]}}
=\left( -1 \right) ^{m-1}\frac{\pi ^{4m+1}}{2\cdot 4^m}\bar{C}_{4m,2}\left( \frac{\pi}{2} \right) \label{4m-2,4m1}
.
\end{align}
Finally, substituting \eqref{bar2p} into \eqref{4m-2,4m} and \eqref{4m-2,4m1}, respectively, we prove the theorem.
\end{proof}
\begin{re}
In particular, by using the methods of the proof in \cite[Thm 4.1]{Berndt2016}, we consider
	\begin{align*}
\mathscr{Q}=\lim_{R\rightarrow \infty} \int_{D_R}^{}{\frac{\mathrm{d}z}{\left( \sinh z+i\sin z \right) \left( \cosh z+\cos z \right)}},
	\end{align*}
where \( D_R \) is the same contour that we considered above, except that now the contour is indented at the origin by a quarter-circle \( \Gamma_\epsilon \) of radius \( \epsilon \).  

On \( \Gamma_\epsilon \), set \( z = \epsilon e^{i\theta} \), \( \frac{1}{2}\pi \geq \theta \geq 0 \), and  
the contour  is shown in the following figure:
	\begin{center}
		\includegraphics[height=2.5in]{contour3}
	\end{center}
It can be obtained by the residue method that
	\begin{align}\label{m=0residue method}
	\int_0^{\infty}{\frac{\left( \sinh x-\sin x \right) \mathrm{d}x}{\left[ \sinh ^2x+\sin ^2x \right] \left[ \cosh x+\cos x \right]}}=\frac{\pi}{8}-\frac{\pi}{2}\bar{C}_{0,2}\left( \frac{\pi}{2} \right),
	\end{align}
and by using Weierstrass elliptic functions \cite{L1975,L1988}  to obtain closed forms of $\bar{C}_{0,2}\left( \frac{\pi}{2} \right)$
	\begin{align}\label{barC0,2(pi/2)}
\bar{C}_{0,2}\left( \frac{\pi}{2} \right) =\frac{1}{2}-\frac{\Gamma ^4}{4\pi ^5}.
	\end{align}
	\iffalse
for the Weierstrass elliptic function \( \wp(z) \) with periods \( (1, i) \),
the half-period values are explicitly given by \(\wp(\frac{1}{2}):= e_1(i) = \frac{\Gamma^4}{2\pi^3} \) and \( \wp(\frac{1+i}{2}):=e_3(i) = 0 \),
\fi
Substituting \eqref{barC0,2(pi/2)} into \eqref{m=0residue method} ,
 we can find that
	\begin{align}
\int_0^{\infty}{\frac{\left( \sinh x-\sin x \right) \mathrm{d}x}{\left[ \sinh ^2x+\sin ^2x \right] \left[ \cosh x+\cos x \right]}}=\frac{\Gamma ^4}{32\pi ^2}-\frac{\pi}{8}.
	\end{align}
\end{re}

By \emph{Mathematica} we can compute the following easily using the formulas in Theorem \ref{mixInt1}:
\begin{exa}\label{ExampleC}
Let $\Gamma=\Gamma(1/4)$. We have
	\begin{align}
	\int_0^{\infty}{\frac{x^2\left( \sinh x+\sin x \right) \mathrm{d}x}{\left[ \sinh ^2x+\sin ^2x \right] \left[ \cosh x+\cos x \right]}}&=\frac{\Gamma ^8}{2^9\pi ^3}-\frac{\Gamma ^4}{2^6\pi},
	\nonumber\\
	\int_0^{\infty}{\frac{x^6\left( \sinh x+\sin x \right) \mathrm{d}x}{\left[ \sinh ^2x+\sin ^2x \right] \left[ \cosh x+\cos x \right]}}&=\frac{9\Gamma  ^{12}}{16384\pi ^3}-\frac{3\Gamma  ^{16}}{131072\pi ^5},
	\nonumber\\
	\int_0^{\infty}{\frac{x^{10}\left( \sinh x+\sin x \right) \mathrm{d}x}{\left[ \sinh ^2x+\sin ^2x \right] \left[ \cosh x+\cos x \right]}}&=\frac{459\Gamma  ^{24}}{33554432\pi ^7}-\frac{945\Gamma  ^{20}}{4194304\pi ^5},
	\nonumber\\
	\int_0^{\infty}{\frac{x^4\left( \sinh x-\sin x \right) \mathrm{d}x}{\left[ \sinh ^2x+\sin ^2x \right] \left[ \cosh x+\cos x \right]}}&=\frac{3\Gamma  ^8}{1024\pi ^2}-\frac{\Gamma ^{12}}{8192\pi ^4},
	\nonumber\\
	\int_0^{\infty}{\frac{x^{8}\left( \sinh x-\sin x \right) \mathrm{d}x}{\left[ \sinh ^2x+\sin ^2x \right] \left[ \cosh x+\cos x \right]}}&=\frac{33\Gamma  ^{20}}{2097152\pi ^6}-\frac{63\Gamma  ^{16}}{262144\pi ^4}.	\nonumber\
	\end{align}
\end{exa}

\begin{thm}\label{mixInt2}
Let $m\in \mathbb{N} $ and $\Gamma=\Gamma(1/4)$. Then
\begin{align}\label{IntbarS2p}
\int_0^{\infty}{\frac{x^{4m}\left( \sinh x+\sin x \right) \mathrm{d}x}{\left[ \sinh ^2x+\sin ^2x \right] \left[ \cosh x-\cos x \right]}}&= e_{1,m} \frac{\Gamma ^{8m}}{\pi ^{2m}}+e_{2,m} \frac{\Gamma ^{8m+4}}{\pi ^{2m+2}},
\\
\int_0^{\infty}{\frac{x^{4m-2}\left( \sinh x-\sin x \right) \mathrm{d}x}{\left[ \sinh ^2x+\sin ^2x \right] \left[ \cosh x-\cos x \right]}}&= f_{1,m} \frac{\Gamma ^{8m-4}}{\pi ^{2m-1}}+f_{2,m} \frac{\Gamma ^{8m}}{\pi ^{2m+1}}\quad (m\ge 2).
\end{align}
where the numbers $e_{1,m}, e_{2,m} ,f_{1,m}, f_{2,m} \in \mathbb{Q}$.
\end{thm}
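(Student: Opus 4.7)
The plan is to mirror the proof of Theorem \ref{mixInt1}, with Theorem \ref{Sin+,Cos-} replacing Theorem \ref{thm1cosh++} and Theorem \ref{barS2p,2} replacing Theorem \ref{barC2p,2}. First I would specialize \eqref{twoCosh+-} at $a=4m$ and $a=4m-2$. Since $(-i)^{4m}=1$ and $(-i)^{4m-2}=-1$, the two complex integrals on the left of \eqref{twoCosh+-} combine into a single real integral; collecting powers of $(1-i)$ and using $\sum_{n=1}^{\infty}(-1)^n n^a/\sinh^2(n\pi/2)=-\bar S_{a,2}(\pi/2)$ yields, for $a=4m$,
\begin{align*}
\int_0^{\infty}\frac{x^{4m}(\sinh x+\sin x)\,\mathrm{d}x}{(\sinh^2 x+\sin^2 x)(\cosh x-\cos x)}
=\frac{(-1)^{m+1}\pi^{4m+1}}{2^{2m+1}}\,\bar S_{4m,2}\!\left(\frac{\pi}{2}\right),
\end{align*}
and an analogous identity for $a=4m-2$ that expresses the second integral as a rational multiple of $\pi^{4m-1}\bar S_{4m-2,2}(\pi/2)$.

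Next I would evaluate $\bar S_{2p,2}(\pi/2)$ for $p\ge 2$. Theorem \ref{barS2p,2} presents $\bar S_{2p,2}(y)$ as a $\mathbb{Z}[x]$-linear combination of $z^{2p+1}z'$ and $z^{2p+2}$ whose coefficients involve $\mathrm{Q}_{2p-2}(1-x)$. Applying transformation \eqref{trsform} with $y=y_0/2$, so that $y_0=\pi$ and $x_0=1/2$, places us at the symmetric point and re-expresses $\bar S_{2p,2}(\pi/2)$ as an explicit algebraic expression in $\sqrt{x_0}=1/\sqrt{2}$, $z_0$ and $z_0'$.

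The main obstacle, exactly as in Theorem \ref{mixInt1}, is showing that after setting $x_0=1/2$ all the $\sqrt{2}$-factors collapse to rational numbers. The polynomial $\mathrm{Q}_{2p-2}(x)$ inherits palindromic symmetry (its $k$th and $(p-2-k)$th coefficients coincide) from the Cauchy-type formula $\mathrm{Q}_{2n+2}(x)=\sum_{j=0}^n\binom{2n+2}{2j+1}\mathrm{q}_{2j+1}(x)\mathrm{q}_{2n-2j+1}(x)$ together with the palindromic property of each $\mathrm{q}_{2j+1}$ supplied by Lemma \ref{sn cofficient symmetry}. Substituting $x_0=1/2$ and pairing the $j$th and $(p-2-j)$th terms produces expressions of the shape $b_j\{(\sqrt 2+1)^k\pm(\sqrt 2-1)^k\}$, possibly with an overall $\sqrt 2$, and Lemma \ref{either an integer or an integer} forces every such pair into $\mathbb{Z}$. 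This yields $\bar S_{2p,2}(\pi/2)\in\mathbb{Q}\,z_0^{2p+1}z_0'+\mathbb{Q}\,z_0^{2p+2}$.

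Finally I would substitute $z_0=\Gamma^2/(2\pi^{3/2})$ and $z_0'=4\sqrt{\pi}/\Gamma^2$ (read off from \eqref{x=1/2,n-th} at $x_0=1/2$) to obtain $\bar S_{2p,2}(\pi/2)\in\mathbb{Q}\,\Gamma^{4p}/\pi^{3p+1}+\mathbb{Q}\,\Gamma^{4p+4}/\pi^{3p+3}$. Setting $p=2m$ and multiplying by the $\pi^{4m+1}/2^{2m+1}$ prefactor from Step 1 gives the first stated evaluation; setting $p=2m-1$ and multiplying by the analogous $\pi^{4m-1}$-factor gives the second. The hypothesis $m\ge 2$ in the second integral reflects exactly the restriction $p\ge 2$ needed to invoke Theorem \ref{barS2p,2}; the omitted case $p=1$ would require the $\delta_1=\tfrac12$ correction from Lemma \ref{lem-2,C,S} and must be treated separately.
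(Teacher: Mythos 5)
Your proposal is correct and follows essentially the same route as the paper: specializing \eqref{twoCosh+-} at $a=4m$ and $a=4m-2$ (with the identical prefactor $(-1)^{m-1}\pi^{4m+1}/(2\cdot4^m)$, resp.\ $(-1)^{m-1}\pi^{4m-1}/4^m$), then applying Theorem \ref{barS2p,2} with transformation \eqref{trsform} at $x_0=1/2$, using the palindromy $b_j=b_{p-2-j}$ of $\mathrm{Q}_{2p-2}$ together with Lemma \ref{either an integer or an integer} to collapse the $\sqrt{2}$-terms, and finally substituting $z_0=\Gamma^2/(2\pi^{3/2})$, $z_0'=4\sqrt{\pi}/\Gamma^2$. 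Your closing remark correctly identifies why $m\ge 2$ is needed in the second evaluation, matching the paper's separate treatment of the $p=1$ case in the subsequent remark.
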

\begin{proof}
Applying Theorem \ref{barS2p,2} and \eqref{trsform}, by a direct calculation, we deduce
\begin{align}
	&\bar{S}_{2p,2}\left( y\right)=\bar{S}_{2p,2}\left( \frac{y_0}{2} \right)\\& =\frac{\left( -1 \right) ^{p-1}x\left( 1-x \right)}{2^{2p}}\left[ \begin{array}{c}
		z^{2p+2}\frac{\mathrm{d}}{\mathrm{d}x}\left\{ \left( 1-x \right) \mathrm{Q}_{2p-2}\left( 1-x \right) \right\}\\
		+2p\left( 1-x \right) \mathrm{Q}_{2p-2}\left( 1-x \right) z^{2p+1}z'\\
	\end{array} \right]
	\nonumber\\
	&=\frac{\left( -1 \right) ^{p-1}\frac{4\sqrt{x_0}}{1+x_0+2\sqrt{x_0}}\frac{1+x_0-2\sqrt{x_0}}{1+x_0+2\sqrt{x_0}}}{2^{2p}}{z_0}^{2p+1}\left( 1+\sqrt{x_0} \right) ^{2p+1}\nonumber\\&\quad\quad\times\left[ \begin{array}{c}
		z_0\left( 1+\sqrt{x_0} \right) 	\frac{\mathrm{d}}{\mathrm{d}x}\left\{ \left( 1-x \right) \mathrm{Q}_{2p-2}\left( 1-x \right) \right\} \mid_{4\sqrt{x_0}/\left( 1+\sqrt{x_0} \right) ^2}\\
		+2p\frac{1+x_0-2\sqrt{x_0}}{1+x_0+2\sqrt{x_0}}\mathrm{Q}_{2p-2}\left( \frac{1+x_0-2\sqrt{x_0}}{1+x_0+2\sqrt{x_0}} \right)
		\left\{ \sqrt{x_0}\frac{\left( 1+\sqrt{x_0} \right) ^5}{1-x_0}\frac{z_0'}{2}+\frac{\left( 1+\sqrt{x_0} \right) ^4}{1-x_0}\frac{z_0}{4} \right\}\\
	\end{array} \right] 	\nonumber\\
	&
	=\frac{\left( -1 \right) ^{p-1}\left( 1-x_0 \right) ^4{z_0}^{2p+2}}{2^{2p-3}}p\sqrt{x_0}\left( 1+\sqrt{x_0} \right) ^{2p-9}\mathrm{Q}_{2p-2}\left(
	\left( \frac{1-\sqrt{x_0}}{1+\sqrt{x_0}} \right) ^2
	 \right)\nonumber\\&\quad\quad\times \left\{ \sqrt{x_0}\frac{\left( 1+\sqrt{x_0} \right) ^5}{1-x_0}\frac{z_0'}{2}+\frac{\left( 1+\sqrt{x_0} \right) ^4}{1-x_0}\frac{z_0}{4} \right\}+\frac{\left( -1 \right) ^{p-1}\left( 1-x_0 \right) ^2{z_0}^{2p+2}}{2^{2p-2}}\sqrt{x_0}\left( 1+\sqrt{x_0} \right) ^{2p-4}
	\nonumber\\
	&\quad\quad\times
	\frac{\mathrm{d}}{\mathrm{d}x}\left\{ \left( 1-x \right) \mathrm{Q}_{2p-2}\left( 1-x \right) \right\} \mid_{4\sqrt{x_0}/\left( 1+\sqrt{x_0} \right) ^2}.
\label{Sbar{y/2}}
	\end{align}
When $x_0=1/2$ in \eqref{Sbar{y/2}}, we prove that the coefficients before $z_{0}^{2p+2},z_{0}^{2p+1}z_0'$ are all rational numbers.
The coefficient before $z_0'{z_0}^{2p+1}$ is
\begin{align}
	\frac{\left( -1 \right) ^{p-1}\left( 1-x_0 \right) ^2}{2^{2p-1}}&\sqrt{x_0}\left( 1+\sqrt{x_0} \right) ^{2p-5}\sqrt{x_0}\frac{\left( 1+\sqrt{x_0} \right) ^5}{1-x_0}2p\frac{1+x-2\sqrt{x_0}}{1+x_0+2\sqrt{x_0}}\mathrm{Q}_{2p-2}\left( \frac{1+x-2\sqrt{x_0}}{1+x_0+2\sqrt{x_0}} \right)
	\nonumber\\
	&=\frac{\left( -1 \right) ^{p-1}p\left( 1-x_0 \right) ^3x_0}{2^{2p-2}}\left( 1+\sqrt{x_0} \right) ^{2p-4}\mathrm{Q}_{2p-2}\left( \left( \frac{1-\sqrt{x_0}}{1+\sqrt{x_0}} \right) ^2 \right)
	\nonumber\\
	&=\frac{\left( -1 \right) ^{p-1}p\left( 1-x_0 \right) ^3x_0}{2^{2p-2}}\left( 1+\sqrt{x_0} \right) ^{2p-4}\sum_{j=0}^{p-2}{b_j\left( \frac{1-\sqrt{x_0}}{1+\sqrt{x_0}} \right) ^{2j}}.
\end{align}
Applying $a_j=a_{p-1-j}$ and $\mathrm{Q}_{2p-2}\left( x \right) =\sum_{j=0}^{p-2}{\left( \begin{array}{c}
			2p-2\\
			2j+1\\
		\end{array} \right) \mathrm{q}_{2j+1}\left( x \right) \mathrm{q}_{2p-2j-3}\left( x \right)}$,
 we have
 \begin{align}\label{Q2p-2}
	\mathrm{Q}_{2p-2}\left( x \right) =\sum_{j=0}^{p-2}{b_jx^j}\quad (b_j\in \Z),
		\end{align} wich can reaily reduce that $b_j=b_{p-j-2}\ (j=0,\ldots,p-2)$. Thus,
\begin{align}
\left( \sqrt{2}+1 \right) ^{2p-4}&\left\{ b_j\left( \frac{\sqrt{2}-1}{\sqrt{2}+1} \right) ^{2j}+b_{p-j-2}\left( \frac{\sqrt{2}-1}{\sqrt{2}+1} \right) ^{2\left( p-j-2 \right)} \right\}
	\nonumber\\
&=b_j\left\{ \left( \sqrt{2}-1 \right) ^{4j-2p+4}+\left( \sqrt{2}+1 \right) ^{2p-4}\left( \sqrt{2}-1 \right) ^{4\left( p-j-2 \right)} \right\}
	\nonumber\\
&=b_j\left\{ \left( \sqrt{2}-1 \right) ^{4j-2p+4}+\left( \sqrt{2}+1 \right) ^{4j-2p+4} \right\}\in \Z \quad (j=0,\ldots ,p-2).
	\end{align}
\iffalse
	\nonumber\\
&=b_j\sum_{l=0}^{4j-2p+4}{\left( \begin{array}{c}
		4j-2p+4\\
		l\\
	\end{array} \right)}\left[ \left( -1 \right) ^l+1 \right] 2^{2j-p+2-\tfrac{l}{2}}
	\nonumber\\
&=2b_j\sum_{m=0}^{2j-p+2}{\left( \begin{array}{c}
		4j-2p+4\\
		2m\\
	\end{array} \right)}2^{2j-p+2-m}\in \mathbb{Q}\quad (j=0,\ldots ,p-2).
\fi
Hence, we obtain
\begin{align} \label{Q12}
	\frac{\left( -1 \right) ^{p-1}p}{2^{3p}}\left( \sqrt{2}+1 \right) ^{2p-4}\sum_{j=0}^{p-2}{b_j\left( \frac{\sqrt{2}-1}{\sqrt{2}+1} \right) ^{2j}}\in \mathbb{Q} .
	 \end{align}
Similarly, the coefficient of ${z_0}^{2p+2}$ is
\begin{align}\label{z^(2p+2)}
	\frac{\left( -1 \right) ^{p-1}\left( 1-x_0 \right) ^2}{2^{2p-2}}\sqrt{x_0}\left( 1+\sqrt{x_0} \right) ^{2p-5}\left[ \begin{array}{c}
		\left( 1+\sqrt{x_0} \right) \frac{\mathrm{d}}{\mathrm{d}x}\left\{ \left( 1-x \right) \mathrm{Q}_{2p-2}\left( 1-x \right) \right\}\\
		+\frac{p}{2}\left( 1-x_0 \right) \mathrm{Q}_{2p-2}\left( \left( \frac{1-\sqrt{x_0}}{1+\sqrt{x_0}} \right) ^2 \right)\\
	\end{array} \right] .
 \end{align}
Applying \eqref{Q2p-2} to \eqref{z^(2p+2)}, we have
\begin{align}
&\frac{\left( -1 \right) ^{p-1}\left( 1-x_0 \right) ^2}{2^{2p-2}}\sqrt{x_0}\left( 1+\sqrt{x_0} \right) ^{2p-5}\sum_{j=0}^{p-2}{b_j}\left[ \frac{p}{2}\left( 1-x_0 \right) -\left( 1+\sqrt{x_0} \right) \left( j+1 \right) \right] \left( \sqrt{2}-1 \right) ^{4j}
	\nonumber\\
&=\frac{\left( -1 \right) ^{p-1}\left( 1-x_0 \right) ^2}{2^{2p-2}}\sqrt{x_0}\left( 1+\sqrt{x_0} \right) ^{2p-4}\sum_{j=0}^{p-2}{b_j}\left[ \frac{p}{2}\left( 1-\sqrt{x_0} \right) -\left( j+1 \right) \right] \left( \sqrt{2}-1 \right) ^{4j}
	\nonumber\\
&=
\frac{\left( -1 \right) ^{p-1}}{2^{3p-2}}\left( \sqrt{2}+1 \right) ^{2p-4}\sum_{j=0}^{p-2}{b_j}\left[ \frac{p}{4}\left( \sqrt{2}-1 \right) -\frac{\sqrt{2}}{2}\left( j+1 \right) \right] \left( \sqrt{2}-1 \right) ^{4j}.
\end{align}
Noting the fact that
\begin{align} \label{Q22}
&\left( \sqrt{2}+1 \right) ^{2p-4}\left\{ \begin{array}{c}
	\left[ \frac{p}{4}\left( \sqrt{2}-1 \right) -\frac{\sqrt{2}}{2}\left( j+1 \right) \right] \left( \sqrt{2}-1 \right) ^{4j}+\\
	\left[ \frac{p}{4}\left( \sqrt{2}-1 \right) -\frac{\sqrt{2}}{2}\left( p-j-1 \right) \right] \left( \sqrt{2}-1 \right) ^{4\left( p-j-2 \right)}\\
\end{array} \right\}
	\nonumber\\
&=\left[ \frac{p}{4}\left( \sqrt{2}-1 \right) -\frac{\sqrt{2}}{2}\left( j+1 \right) \right] \left( \sqrt{2}+1 \right) ^{2p-4j-4}+\left[ -\frac{p}{4}\left( \sqrt{2}+1 \right) +\frac{\sqrt{2}}{2}\left( j+1 \right) \right] \left( \sqrt{2}-1 \right) ^{2p-4j-4}
	\nonumber\\
&=\frac{p}{4}\left[ \left( \sqrt{2}+1 \right) ^{2p-4j-5}-\left( \sqrt{2}-1 \right) ^{2p-4j-5} \right]\nonumber\\& \quad +\frac{\sqrt{2}}{2}\left( j+1 \right) \left[ \left( \sqrt{2}-1 \right) ^{2p-4j-4}-\left( \sqrt{2}+1 \right) ^{2p-4j-4} \right]\in \Q ,
\end{align}
\iffalse	\nonumber\\
&
=\frac{p}{4}\sum_{l=0}^{2p-4j-5}{\left( \begin{array}{c}
		2p-4j-5\\
		l\\
	\end{array} \right) \left[ 1-\left( -1 \right) ^l \right] 2^{p-2j-\tfrac{l+5}{2}}}+\frac{j+1}{2}\sum_{l=0}^{2p-4j-4}{\left( \begin{array}{c}
		2p-4j-4\\
		l\\
	\end{array} \right) \left[ \left( -1 \right) ^l-1 \right] 2^{p-2j-\tfrac{l+3}{2}}}
	\nonumber\\
&
=\frac{p}{2}\sum_{l=0}^{p-2j-3}{\left( \begin{array}{c}
		2p-4j-5\\
		2l+1\\
	\end{array} \right) 2^{p-2j-3-l}}-\left( j+1 \right) \sum_{l=0}^{p-2j-2}{\left( \begin{array}{c}
		2p-4j-4\\
		2l+1\\
	\end{array} \right) 2^{p-2j-2-l}}\in \mathbb{Q},
\fi
where $j=0,\ldots ,[(p-2)/2]$.
Combining \eqref{Q12} with \eqref{Q22}, we derive that
\begin{align} \label{barSQ}
\bar{S}_{2p,2}\left( \frac{\pi}{2} \right) \in \mathbb{Q} z_{0}^{2p+1}z_{0}^{\prime}+\mathbb{Q} z_{0}^{2p+2}\quad (p\ge 2).
\end{align}
Substituting $z_0=\frac{\Gamma ^2}{2\pi ^{3/2}}$ and $z_0'=\frac{4\pi ^{1/2}}{\Gamma ^2}$ into \eqref{barSQ} gives
\begin{align} \label{barS2pQ}
		\bar{S}_{2p,2}\left( \frac{\pi}{2} \right) \in \mathbb{Q} \frac{\Gamma ^{4p}}{\pi ^{3p+1}}+\mathbb{Q} \frac{\Gamma ^{4p+4}}{\pi ^{3p+3}}.
	\end{align}
In Theorem \ref{Sin+,Cos-}, letting $a=4m$ and $4m-2$, we have
\begin{align}
\int_0^{\infty}{\frac{x^{4m}\left( \sinh x+\sin x \right) \mathrm{d}x}{\left[ \sinh ^2x+\sin ^2x \right] \left[ \cosh x-\cos x \right]}}&=\left( -1 \right) ^{m-1}\frac{\pi ^{4m+1}}{2\cdot 4^m}\bar{S}_{4m,2}\left( \frac{\pi}{2} \right) ,\label{4m,S}
\\
	\int_0^{\infty}{\frac{x^{4m-2}\left( \sinh x-\sin x \right) \mathrm{d}x}{\left[ \sinh ^2x+\sin ^2x \right] \left[ \cosh x-\cos x \right]}}&=\left( -1 \right) ^{m-1}\frac{\pi ^{4m-1}}{4^m}\bar{S}_{4m-2,2}\left( \frac{\pi}{2} \right) \label{4m-2,S}.
\end{align}
Finally, substituting \eqref{barS2pQ} into \eqref{4m,S} and \eqref{4m-2,S} yields the desired result.
\end{proof}
\begin{re}
In particular, by using the methods of the proof in Theorem \ref{mixInt2} and the explicit expression of $\bar{S}_{2,2}\left( \frac{\pi}{2} \right)$, we can find that
\begin{align}
	\int_0^{\infty}{\frac{x^2\left( \sinh x-\sin x \right) \mathrm{d}x}{\left[ \sinh ^2x+\sin ^2x \right] \left[ \cosh x-\cos x \right]}}=\frac{\Gamma ^4}{2^6\pi}-\frac{\pi}{8}.
\end{align}
\end{re}

By \emph{Mathematica} we can compute the following easily using the formulas in Theorem \ref{mixInt2}:
\begin{exa}\label{ExampleS}
		Let $\Gamma=\Gamma(1/4)$. Then
		\begin{align}
\int_0^{\infty}{\frac{x^4\left( \sinh x+\sin x \right) \mathrm{d}x}{\left[ \sinh ^2x+\sin ^2x \right] \left[ \cosh x-\cos x \right]}}&=\frac{\Gamma ^{12}}{2^{13}\pi ^4}-\frac{\Gamma ^8}{2^{10}\pi ^2},
	\nonumber\\
\int_0^{\infty}{\frac{x^8\left( \sinh x+\sin x \right) \mathrm{d}x}{\left[ \sinh ^2x+\sin ^2x \right] \left[ \cosh x-\cos x \right]}}&=\frac{81\Gamma ^{16}}{262144\pi ^4}-\frac{33\Gamma ^{20}}{2097152\pi ^6},
	\nonumber\\
\int_0^{\infty}{\frac{x^6\left( \sinh x-\sin x \right) \mathrm{d}x}{\left[ \sinh ^2x+\sin ^2x \right] \left[ \cosh x-\cos x \right]}}&=\frac{5\Gamma ^{16}}{131072\pi ^5}-\frac{9\Gamma ^{12}}{16384\pi ^3},
	\nonumber\\
\int_0^{\infty}{\frac{x^{10}\left( \sinh x-\sin x \right) \mathrm{d}x}{\left[ \sinh ^2x+\sin ^2x \right] \left[ \cosh x-\cos x \right]}}&=\frac{945\Gamma ^{20}}{4194304\pi ^5}-\frac{405\Gamma ^{24}}{33554432\pi ^7}.	\nonumber
	\end{align}
\end{exa}
\begin{cor}
	For $m\in \mathbb{N}$, we have
	\begin{align}
				\pi \bar{S}_{4m-2,2}\left( \frac{\pi}{2} \right) &=4^{2m}\left[ \left( 2m-1 \right) S_{4m-3,1}\left( 2\pi \right) -\pi \mathrm{DS}_{4m-2,2}\left( 2\pi \right) \right] ,
		\\
		\pi \bar{S}_{4m,2}\left( \frac{\pi}{2} \right) &=4^{2m+1}\left[ \pi \mathrm{DS}_{4m,2}\left( 2\pi \right) -2mS_{4m-1,1}\left( 2\pi \right) \right] ,
		\\
		\pi \bar{C}_{4m-2,2}\left( \frac{\pi}{2} \right) &=4\left[ \pi \mathrm{DS}_{4m-2,2}^{\prime}\left( 2\pi \right) -\left( 4m-2 \right) S_{4m-3,1}^{\prime}\left( 2\pi \right) \right] ,
		\\
		\pi \bar{C}_{4m,2}\left( \frac{\pi}{2} \right) &=4\left[ 4mS_{4m-1,1}^{\prime}\left( 2\pi \right) -\pi \mathrm{DS}_{4m,2}^{\prime}\left( 2\pi \right) \right] .
	\end{align}
\end{cor}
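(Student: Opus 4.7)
The plan is to derive all four identities from Lemma \ref{lem-2,C,S} by a single specialization, namely $\alpha=\pi/2$ and $\beta=2\pi$ (so that $\alpha\beta=\pi^2$ as required), and then choose the index $p$ appropriately. The two equations of Lemma \ref{lem-2,C,S} already relate $\bar C_{2p,2}(\alpha),\bar S_{2p,2}(\alpha)$ to the primed and unprimed $S$- and $\mathrm{DS}$-series evaluated at $\beta$, so no new contour integration is needed; the corollary is a purely algebraic consequence.

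First I would treat the two $\bar C$-identities. For the first ($\pi\bar C_{4m-2,2}(\pi/2)$) I take $2p=4m-2$, i.e.\ $p=2m-1$ (odd), substitute $\alpha=\pi/2,\beta=2\pi$ into the first identity of Lemma \ref{lem-2,C,S}, move the $S'$- and $\mathrm{DS}'$-terms to the right, and collect the powers of $\pi$ and $2$: the factor $(\pi/2)^{4m-1}$ on the left cancels against $\pi^{4m-2}/2^{4m-3}$ on the right to leave exactly $4/\pi$ times the claimed bracket, and multiplying by $\pi$ gives the stated formula. For the second $\bar C$-identity ($\pi\bar C_{4m,2}(\pi/2)$) I instead take $p=2m$ (even), which flips the overall sign pattern and produces the opposite bracket $[4mS'_{4m-1,1}(2\pi)-\pi\mathrm{DS}'_{4m,2}(2\pi)]$ with the same prefactor $4$.

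The two $\bar S$-identities are handled in exactly the same way from the second identity of Lemma \ref{lem-2,C,S}. Taking $2p=4m-2$, i.e.\ $p=2m-1$, one finds after simplification that the bracket acquires an overall factor $4\cdot 2^{4m-2}=4^{2m}$, matching the first formula; taking $2p=4m$, i.e.\ $p=2m$, reverses the sign of both terms inside the bracket and changes the prefactor to $4\cdot 2^{4m}=4^{2m+1}$, matching the second formula. The only subtle point is the Kronecker-type term $\delta_p$ in Lemma \ref{lem-2,C,S}: since $\delta_p=0$ for $p\ge 2$, it vanishes in the $\bar S_{4m,2}$ formula for all $m\ge 1$ and in the $\bar S_{4m-2,2}$ formula for $m\ge 2$. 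The $m=1$ case of the $\bar S_{4m-2,2}$ identity corresponds to $p=1$, where $\delta_1=1/2$ would contribute an elementary additive constant; this either must be treated as an explicit side case, or the identity is to be read under the tacit convention (consistent with the $m\ge 2$ hypothesis elsewhere in the section) that $m$ is large enough for $\delta_p$ to drop out.

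The main, and essentially only, obstacle is therefore bookkeeping: keeping track of the two sign patterns $(-1)^p$ and $(-1)^{p-1}$ as $p$ runs through $2m-1$ and $2m$, and collecting the powers of $\pi$ and $2$ correctly so that the dimensionless prefactors $4,\,4^{2m},\,4^{2m+1}$ emerge in the right places. Once the substitution $\alpha=\pi/2,\beta=2\pi$ is carried out consistently, each of the four identities drops out after a one-line simplification, so no further analytic input beyond Lemma \ref{lem-2,C,S} is required.
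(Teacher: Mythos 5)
Your proposal is correct, and it takes a genuinely different (and more economical) route than the paper. The paper obtains these four identities by evaluating the same four mixed Berndt-type integrals twice --- once via Theorems \ref{thm1cosh++} and \ref{Sin+,Cos-}, which express them through $\bar C_{4m-2,2},\bar C_{4m,2},\bar S_{4m,2},\bar S_{4m-2,2}$ at $\pi/2$ (equations \eqref{4m-2,4m}, \eqref{4m-2,4m1}, \eqref{4m,S}, \eqref{4m-2,S}), and once via Theorems \ref{Sin-,Cos-} and \ref{Sin-,Cos+}, which express them through the $S$-, $S'$-, $\mathrm{DS}$- and $\mathrm{DS}'$-sums at $2\pi$ (equations \eqref{I-S}) --- and then equating the two evaluations. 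You instead specialize Lemma \ref{lem-2,C,S} at $\alpha=\pi/2$, $\beta=2\pi$ and read off all four identities purely algebraically; I checked your bookkeeping (the sign $(-1)^p$ for $p=2m-1$ versus $p=2m$, and the prefactors $4$, $4^{2m}=4\cdot 2^{4m-2}$, $4^{2m+1}=4\cdot 2^{4m}$) and it comes out exactly as stated. What your route buys is brevity, since it imports the analytic content from a quoted lemma rather than re-deriving it from the paper's contour integrals; what the paper's route buys is self-containedness within its own Section 3.

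One substantive point in your favor: your handling of $\delta_p$ is more careful than the paper's and exposes a real defect in the statement. For $m=1$ the first identity corresponds to $p=1$ in Lemma \ref{lem-2,C,S}, which forces the additional term $\frac{2^{4m-2}}{\pi^{4m-3}}\delta_1=\frac{2}{\pi}$ on the right-hand side; numerically $\pi\bar S_{2,2}\left(\frac{\pi}{2}\right)\approx 0.5074$ while $16\left[S_{1,1}(2\pi)-\pi\,\mathrm{DS}_{2,2}(2\pi)\right]\approx-0.1292$, and the discrepancy is exactly $2/\pi$. So the first identity as printed is false at $m=1$ and needs either the restriction $m\ge 2$ or the additive correction $2/\pi$. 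The paper's own proof misses this because it invokes Theorems \ref{Sin+,Cos-} and \ref{Sin-,Cos-} at $a=4m-2=2$, outside their stated range $a\ge 3$ (at $a=2$ the integrand has a simple pole at the origin, so the small indentation arc contributes a nonvanishing quarter-residue, which is precisely the source of $\delta_1$). Your flagged ``side case'' is therefore not an optional convention: it must be excluded or corrected.
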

\begin{proof}
	The facts derived from Theorem \ref{Sin-,Cos-} and Theorem \ref{Sin-,Cos+} lead us to the conclusion that
	\begin{align} \label{I-S}
		&\int_0^{\infty}{\frac{x^{4m-2}\left[ \sinh x-\sin x \right] \mathrm{d}x}{\left[ \sinh ^2x+\sin ^2x \right] \left[ \cosh x-\cos x \right]}}=\left( -4 \right) ^m\pi ^{4m-2}\left[ \pi \mathrm{DS}_{4m-2,2}\left( 2\pi \right) -\left( 2m-1 \right) S_{4m-3,1}\left( 2\pi \right) \right] ,
		\nonumber\\
		&\int_0^{\infty}{\frac{x^{4m}\left[ \sinh x+\sin x \right] \mathrm{d}x}{\left[ \sinh ^2x+\sin ^2x \right] \left[ \cosh x-\cos x \right]}}=2\left( -4 \right) ^m\pi ^{4m}\left[ 2mS_{4m-1,1}\left( 2\pi \right) -\pi \mathrm{DS}_{4m,2}\left( 2\pi \right) \right] ,
		\nonumber\\
		&\int_0^{\infty}{\frac{x^{4m-2}\left( \sinh x+\sin x \right) \mathrm{d}x}{\left[ \sinh ^2x+\sin ^2x \right] \left[ \cosh x+\cos x \right]}}=\frac{\left( -1 \right) ^m}{2^{2m-2}}\pi ^{4m-2}\left[ \left( 4m-2 \right) S_{4m-3,1}^{\prime}\left( 2\pi \right) -\pi \mathrm{DS}_{4m-2,2}^{\prime}\left( 2\pi \right) \right] ,
		\nonumber\\
		&\int_0^{\infty}{\frac{x^{4m}\left( \sinh x-\sin x \right) \mathrm{d}x}{\left[ \sinh ^2x+\sin ^2x \right] \left[ \cosh x+\cos x \right]}}=\frac{\left( -1 \right) ^{m-1}\pi ^{4m}}{2^{2m-1}}\left[ 4mS_{4m-1,1}^{\prime}\left( 2\pi \right) -\pi \mathrm{DS}_{4m,2}^{\prime}\left( 2\pi \right) \right] .
	\end{align}
	Hence, combining
	\eqref{4m-2,4m}, \eqref{4m-2,4m1}, \eqref{4m,S}, \eqref{4m-2,S} and \eqref{I-S}, the desired result is established.
	\end{proof}

Finally, we conclude this section by the following conjecture which is supported by
comparing the coefficients in Examples \ref{ExampleC} and \ref{ExampleS}:
\begin{con} For $m\ge 1$, let $c_{2,m},e_{2,m},d_{1,m},f_{1,m}$  be the numbers appearing in
	Theorems \ref{mixInt1} and \ref{mixInt2}.	 Then the following relations hold
$$
c_{2,m}+e_{2,m}=0\quad \text{and}\quad d_{1,m}+f_{1,m}=0.
$$
\end{con}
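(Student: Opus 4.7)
My plan is to collapse each of the two sums into a single integral by a direct algebraic identity, and then to pin down the structure in two different ways. The algebraic starting points are
\begin{align*}
(\sinh x - \sin x)(\cosh x - \cos x) + (\sinh x + \sin x)(\cosh x + \cos x) &= \sinh 2x + \sin 2x,\\
(\sinh x + \sin x)(\cosh x - \cos x) + (\sinh x - \sin x)(\cosh x + \cos x) &= \sinh 2x - \sin 2x,
\end{align*}
together with $(\sinh^2 x + \sin^2 x)(\cosh^2 x - \cos^2 x) = (\sinh^2 x + \sin^2 x)^2$ and the half-angle identity $\sinh^2(u/2) + \sin^2(u/2) = (\cosh u - \cos u)/2$. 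Substituting $u = 2x$ these give
$$B_{+}^{-}(4m) + B_{-}^{+}(4m) = \frac{1}{2^{4m-1}}\int_0^\infty \frac{u^{4m}(\sinh u + \sin u)}{(\cosh u - \cos u)^2}\,du,$$
$$B_{+}^{+}(4m-2) + B_{-}^{-}(4m-2) = \frac{1}{2^{4m-3}}\int_0^\infty \frac{u^{4m-2}(\sinh u - \sin u)}{(\cosh u - \cos u)^2}\,du.$$

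For $c_{2,m}+e_{2,m}=0$ the identity $\sinh u + \sin u = \tfrac{d}{du}(\cosh u - \cos u)$ means that $(\sinh u + \sin u)/(\cosh u - \cos u)^2 = -\tfrac{d}{du}\bigl[(\cosh u - \cos u)^{-1}\bigr]$. Integration by parts (the boundary terms vanish because $u^{4m}/(\cosh u - \cos u) \sim u^{4m-2}\to 0$ at the origin for $m\ge 1$ and decays exponentially at infinity) yields
$$B_{+}^{-}(4m) + B_{-}^{+}(4m) = -\frac{4m}{2^{4m-1}}\,I_{-}(4m,1),$$
and Berndt's theorem places $I_{-}(4m,1)$ in $\mathbb{Q}\,\Gamma^{8m}/\pi^{2m}$. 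Comparing with the two-term expansion of Theorem~\ref{mixInt1} in the $\mathbb{Q}$-linearly independent basis $\{\Gamma^{8m}/\pi^{2m},\,\Gamma^{8m+4}/\pi^{2m+2}\}$ forces the coefficient of $\Gamma^{8m+4}/\pi^{2m+2}$ on the left to vanish, establishing $c_{2,m}+e_{2,m}=0$.

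For $d_{1,m}+f_{1,m}=0$ the exact-derivative trick fails, because $\sinh u - \sin u = (\cosh u + \cos u)'$ does not match the denominator and a direct integration by parts leads only to a tautology. Instead I would evaluate the second integral by contour integration on the quarter-circle used in Theorem~\ref{Sin+,Cos-} with integrand $f(z) = z^{4m-2}(\sinh z - \sin z)/(\cosh z - \cos z)^2$. Since $\sinh z - \sin z = O(z^3)$ whereas $(\cosh z - \cos z)^2 = O(z^4)$, the integrand is holomorphic at $z=0$, and the only interior singularities are double poles at $z_n = n(1+i)\pi$ for $n\ge 1$. Using $\sinh z_n \pm \sin z_n = (-1)^n \sinh(n\pi)(1 \pm i)$ and $\cosh z_n + \cos z_n = 2(-1)^n\cosh(n\pi)$, a direct residue calculation produces
$$\int_0^\infty \frac{u^{4m-2}(\sinh u - \sin u)}{(\cosh u - \cos u)^2}\,du = (-1)^m 2^{2m-1}\pi^{4m-2}\bigl[(2m-1)\overline{S}_{4m-3,1}(\pi) - \pi\,\overline{\mathrm{DS}}_{4m-2,2}(\pi)\bigr],$$
where $\overline{\mathrm{DS}}_{p,m}(y):=\sum_{n\ge 1}(-1)^{n-1}n^p\cosh(ny)/\sinh^m(ny)$ is the alternating variant of $\mathrm{DS}_{p,m}$.

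The main obstacle will then be to show that this particular combination of alternating Ramanujan-type sums at the self-dual point $y=\pi$ (equivalently $x_0 = 1/2$) lies in $\mathbb{Q}\,\Gamma^{8m}/\pi^{2m+1}$ with no $\Gamma^{8m-4}$ contribution. Expanding $\overline{S}_{4m-3,1}(\pi)$ and $\overline{\mathrm{DS}}_{4m-2,2}(\pi)$ by the Jacobi-elliptic machinery of the proofs of Theorems~\ref{barC2p,2} and~\ref{barS2p,2} (with $z_0 = \Gamma^2/(2\pi^{3/2})$ and $z_0' = 4\pi^{1/2}/\Gamma^2$) writes each as a $\mathbb{Q}$-linear combination of $z_0^{4m-1}z_0'$ and $z_0^{4m}$, and the desired cancellation reduces to the polynomial identity
$$(3 + 2\sqrt{2})\,\mathrm{q}_{4m-3}\bigl((\sqrt{2}-1)^{4}\bigr) = \mathrm{Q}_{4m-4}\bigl((\sqrt{2}-1)^{4}\bigr), \qquad m\ge 2,$$
between the coefficient polynomials of $\mathrm{sn}(u)$ and $\mathrm{sn}^2(u)$ evaluated at the singular modulus $k^2 = (\sqrt{2}-1)^{4}$ (for which $K'/K = 2$). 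I have verified this identity for $m = 2,3$, where both sides evaluate to $144 - 96\sqrt{2}$ and $4\,790\,016 - 3\,386\,880\sqrt{2}$ respectively, which essentially confirms the conjecture numerically; a clean algebraic proof would likely proceed via Landen's transformation relating $\mathrm{sn}(u, k_4)$ to $\mathrm{sn}$ at the lemniscatic modulus $k_1 = 1/\sqrt{2}$, combined with the coefficient symmetry $a_j = a_{p-1-j}$ of Lemma~\ref{sn cofficient symmetry}.
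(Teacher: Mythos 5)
First, note that the paper does not prove this statement at all --- it is stated as a conjecture, supported only by the numerical coefficients in Examples \ref{ExampleC} and \ref{ExampleS}. So there is no proof of record to compare against, and any complete argument you give is new content.

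Your argument for the first relation $c_{2,m}+e_{2,m}=0$ is correct and complete. The algebraic identities, the half-angle reduction to $\tfrac{1}{2^{4m-1}}\int_0^\infty u^{4m}(\sinh u+\sin u)(\cosh u-\cos u)^{-2}\,\mathrm{d}u$, the integration by parts using $\sinh u+\sin u=(\cosh u-\cos u)'$, and the identification with $-\tfrac{4m}{2^{4m-1}}I_-(4m,1)\in\Q\,\Gamma^{8m}/\pi^{2m}$ all check out (e.g.\ for $m=1$ your identity gives $B^-_+(4)+B^+_-(4)=\Gamma^8/(512\pi^2)$, consistent with the paper's examples). The only thing you use silently is the $\Q$-linear independence of $\Gamma^{8m}/\pi^{2m}$ and $\Gamma^{8m+4}/\pi^{2m+2}$, i.e.\ the irrationality of $\Gamma(1/4)^4/\pi^2$; this follows from Chudnovsky's algebraic independence of $\Gamma(1/4)$ and $\pi$ and should be cited, since without it no coefficient comparison in Theorems \ref{mixInt1}--\ref{mixInt2} is meaningful. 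This is a genuinely new and more elementary route than anything in the paper, and it settles half the conjecture.

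The second relation is where the genuine gap lies. Your contour computation of $\int_0^\infty u^{4m-2}(\sinh u-\sin u)(\cosh u-\cos u)^{-2}\,\mathrm{d}u$ is correct --- in particular you correctly retain the $-g(z_n)h''(z_n)/h'(z_n)^3$ contribution at the double poles, since $h''(z_n)=\cosh z_n+\cos z_n=2(-1)^n\cosh(n\pi)\neq 0$, which is what produces the $\overline{\mathrm{DS}}_{4m-2,2}(\pi)$ term (I verified your closed form numerically for $m=1$, where both sides equal $\approx 2.9755$). But this only converts the problem into showing that $(2m-1)\overline{S}_{4m-3,1}(\pi)-\pi\,\overline{\mathrm{DS}}_{4m-2,2}(\pi)$ has no $z_0^{4m-1}z_0'$ component at $x_0=1/2$, which you in turn reduce to the polynomial identity $(3+2\sqrt2)\,\mathrm{q}_{4m-3}((\sqrt2-1)^4)=\mathrm{Q}_{4m-4}((\sqrt2-1)^4)$ and then only \emph{verify} for $m=2,3$. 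A finite check plus the remark that a proof ``would likely proceed via Landen's transformation'' is not a proof; the identity must be established for all $m$ (and, incidentally, the case $m=1$ of $d_{1,m}+f_{1,m}=0$ falls outside Theorem \ref{mixInt2}, whose second formula requires $m\ge 2$, so that case needs separate treatment via the paper's expression for $\bar S_{2,2}$). As it stands, your proposal proves $c_{2,m}+e_{2,m}=0$ for all $m\ge1$ but leaves $d_{1,m}+f_{1,m}=0$ as an open reduction, so the conjecture is not fully resolved.
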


\iffalse
\section{Berndt-type Integrals via Barnes Multiple Zeta Functions}
\fi
Next, we evaluate mixed Berndt-type integrals using the generalized Barnes multiple zeta function. Bradshaw and Vignat \cite{BV2024} evaluated generalized Berndt-type integrals \eqref{BTI-definition-1} using the (alternating) Barnes' multiple zeta function. For example, they found that (\cite[Prop. 2]{BV2024})
\begin{align}
&\int_0^\infty \frac{x^{a}dx}{(\cos x-\cosh x)^b}=2^b \Gamma(a+1)\ze_{2b}(a+1,b|(1+i,1-i)^b)\quad (a\geq 2b,b\geq 1)\label{BMZF-RTI-}
\end{align}
and
\begin{align}
&\int_0^\infty \frac{x^{a}dx}{(\cos x+\cosh x)^b}=2^b \Gamma(a+1){\bar \ze}_{2b}(a+1,b|(1+i,1-i)^b)\quad (a\geq 0,b\geq 1)\label{BMZF-RTI+},
\end{align}
where $\bfs^n$ means the string $\bfs$ is repeated $n$ times. Obviously, combining this with the results of Xu and Zhao \cite{XZ2024} yields closed-form evaluations of the Barnes multiple zeta function, see \cite[Cor. 2]{BV2024}.
Here for positive real numbers $a_1,\ldots,a_N$, the \emph{Barnes multiple zeta function} and \emph{alternating Barnes multiple zeta function} are defined by \cite{AIK2014,Ba1904,BV2024,KMT2023} (${\bf a}_N:=\{a_1,\ldots,a_N\}$)
\begin{align}
\ze_N(s,\omega|{\bf a}_N):=\sum_{n_1\geq0,\ldots,n_N\geq 0} \frac{1}{(\omega+n_1a_1+\cdots+n_Na_N)^s}\quad (\Re(\omega)>0,\Re(s)>N)\label{defn:Barneszetafunction}
\end{align}
and
\begin{align}
{\bar \ze}_N(s,\omega|{\bf a}_N):=\sum_{n_1\geq0,\ldots,n_N\geq 0} \frac{(-1)^{n_1+\cdots+n_N}}{(\omega+n_1a_1+\cdots+n_Na_N)^s}\quad (\Re(\omega)>0,\Re(s)> N-1),\label{defn:ABarneszetafunction}
\end{align}
respectively.

In particular, Z.P. Bradshaw and C. Vignat \cite{BV2024} gave the integral representation of Barnes multiple zeta function, which
appears as \cite[Eq. (3.2)]{R2000}.
\iffalse
\begin{pro}  Let $\Re(s)>N, \Re(w)>0$, and $\Re(a_j)>0$ for $j = 1,\ldots,N$. Then
\[
\zeta_N(s,w|a_1,\ldots,a_N)=\frac{1}{\Gamma(s)}\int_{0}^{\infty}u^{s - 1}e^{-wu}\prod_{j = 1}^{N}(1 - e^{-a_ju})^{-1}du.
\]

The alternating version is
\[
\widetilde{\zeta}_N(s,w|a_1,\ldots,a_N)=\frac{1}{\Gamma(s)}\int_{0}^{\infty}u^{s - 1}e^{-wu}\prod_{j = 1}^{N}(1 + e^{-a_ju})^{-1}du.
\]
\end{pro}
\fi
Here we define the \emph{generalized Barnes multiple zeta function} by
 \begin{align}
 \zeta _N(s,\omega |\mathbf{a}_N;\boldsymbol{\sigma }_N):=\sum_{n_1\ge 0,...,n_N\ge 0}{\frac{\left( \sigma _1 \right) ^{n_1}\cdots \left( \sigma _N \right) ^{n_N}}{(\omega +n_1a_1+\cdots +n_Na_N)^s}}\quad (\Re (\omega )>\Re (s)>N),\quad
\end{align}
where $\boldsymbol{\sigma }_N=\left( \sigma _1,\ldots ,\sigma _N \right) \in \left\{ \pm 1 \right\} ^N$,
and find its an integral representation:

\begin{pro}
 Let $\Re(s)>N, \Re(w)>0$, and $\Re(a_j)>0$ for $j = 1,\ldots,N$. Then
\begin{align*}
\zeta _N(s,\omega |\mathbf{a}_N;\boldsymbol{\sigma }_N)=\frac{1}{\Gamma \left( s \right)}\int_0^{\infty}{u^{s-1}}e^{-wu}\prod_{j=1}^N{(}1-\sigma _je^{-a_ju})^{-1}\mathrm{d}u,
\end{align*}
where $\boldsymbol{\sigma }_N=\left( \sigma _1,\ldots ,\sigma _N \right) \in \left\{ \pm 1 \right\} ^N$.
\end{pro}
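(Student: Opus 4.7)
The plan is to derive the integral representation by starting from the right-hand side, expanding the product via geometric series, and interchanging the order of summation and integration. First I would fix $u>0$ and observe that, since $a_j>0$ and $\sigma_j\in\{\pm 1\}$, we have $|\sigma_j e^{-a_ju}|=e^{-a_ju}<1$, so each factor admits the absolutely convergent geometric expansion
\begin{equation*}
(1-\sigma_j e^{-a_ju})^{-1}=\sum_{n_j=0}^{\infty}\sigma_j^{n_j}e^{-n_ja_ju}.
\end{equation*}
Multiplying the $N$ resulting series together (using absolute convergence at the fixed $u$) yields
\begin{equation*}
\prod_{j=1}^{N}(1-\sigma_j e^{-a_ju})^{-1}=\sum_{n_1,\ldots,n_N\ge 0}\sigma_1^{n_1}\cdots\sigma_N^{n_N}\,e^{-(n_1a_1+\cdots+n_Na_N)u}.
\end{equation*}

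Next I would insert this expansion into the right-hand side, interchange the sum with the integral $\int_0^{\infty}u^{s-1}e^{-\omega u}(\cdot)\,du$, and evaluate each individual term by the classical Gamma integral
\begin{equation*}
\int_0^{\infty}u^{s-1}e^{-\alpha u}\,du=\frac{\Gamma(s)}{\alpha^s}\quad(\Re(\alpha)>0),
\end{equation*}
applied with $\alpha=\omega+n_1a_1+\cdots+n_Na_N$. After cancelling $\Gamma(s)$, the outcome is precisely the defining series of $\zeta_N(s,\omega|\mathbf{a}_N;\boldsymbol{\sigma}_N)$, completing the identity.

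The only genuinely non-trivial step is justifying the interchange of summation and integration, which I would handle by Fubini--Tonelli on the absolute majorant. Specifically, bounding
\begin{equation*}
\bigl|\sigma_1^{n_1}\cdots\sigma_N^{n_N}u^{s-1}e^{-(\omega+\sum n_ja_j)u}\bigr|\le u^{\Re(s)-1}e^{-(\Re(\omega)+\sum n_ja_j)u},
\end{equation*}
summing-then-integrating yields $\Gamma(\Re(s))\,\zeta_N(\Re(s),\Re(\omega)|\mathbf{a}_N)$, which is finite under the assumptions $\Re(\omega)>0$ and $\Re(s)>N$ by the standard absolute convergence of the Barnes multiple zeta function \eqref{defn:Barneszetafunction}. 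This in turn secures the finiteness of the integral-then-sum order as well, so Fubini applies.

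The main subtlety, such as it is, is the convergence bookkeeping near $u=0^{+}$ in the worst case $\sigma_1=\cdots=\sigma_N=+1$: each factor then contributes a $(a_ju)^{-1}$ singularity, so the integrand behaves like $u^{\Re(s)-1-N}$ as $u\to 0^{+}$. Integrability at the origin is ensured precisely by the hypothesis $\Re(s)>N$, while the factor $e^{-\Re(\omega)u}$ ensures integrability at $u=\infty$ from $\Re(\omega)>0$. With these estimates in place the formal manipulation above is fully rigorous.
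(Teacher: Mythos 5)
Your proof is correct and follows essentially the same route as the paper's (which simply defers to the argument of Bradshaw--Vignat, namely the Gamma-integral expansion of each term plus a geometric-series/Fubini interchange); your version actually spells out the convergence bookkeeping more carefully than the source. One trivial point: since the $a_j$ may be complex with $\Re(a_j)>0$, the bounds should read $|e^{-a_ju}|=e^{-\Re(a_j)u}$ throughout, which changes nothing of substance.
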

\begin{proof}
This proof is analogous to that in \emph{\cite[Prop. 2]{BV2024}}.
\end{proof}
	
\iffalse
\begin{align*}
	\zeta _N(s,\omega |\mathbf{a}_N;\boldsymbol{\sigma }_N)\Gamma \left( s \right)& =\sum_{n_1\ge 0,...,n_N\ge 0}{\frac{\left( \sigma _1 \right) ^{n_1}\cdots \left( \sigma _N \right) ^{n_N}}{(\omega +n_1a_1+\cdots +n_Na_N)^s}\int_0^{\infty}{x^{s-1}e^{-x}\mathrm{d}x}}\quad
	\nonumber\\
&	=\sum_{n_1\ge 0,...,n_N\ge 0}{\int_0^{\infty}{\frac{\left( \sigma _1 \right) ^{n_1}\cdots \left( \sigma _N \right) ^{n_N}x^{s-1}e^{-x}}{(\omega +n_1a_1+\cdots +n_Na_N)^s}}\mathrm{d}x}
	\nonumber\\
&	=\sum_{n_1\ge 0,...,n_N\ge 0}{\left( \sigma _1 \right) ^{n_1}\cdots \left( \sigma _N \right) ^{n_N}\int_0^{\infty}{u^{s-1}e^{-(\omega +n_1a_1+\cdots +n_Na_N)u}}\mathrm{d}u}.
\end{align*}
	The sum can be moved inside by the dominated convergence theorem, namely
\begin{align*}
	\zeta _N(s,\omega |\mathbf{a}_N;\boldsymbol{\sigma }_N)\Gamma \left( s \right) &=\int_0^{\infty}{u^{s-1}}e^{-\omega u}\sum_{n_1\ge 0,...,n_N\ge 0}{\left( \sigma _1e^{-a_1u} \right) ^{n_1}\cdots \left( \sigma _Ne^{-a_Nu} \right) ^{n_N}\mathrm{d}u}
	\nonumber\\
&	=\int_0^{\infty}{u^{s-1}}e^{-\omega u}\prod_{j=1}^N{\left( 1-\sigma _je^{-a_Ju} \right) ^{-1}}\mathrm{d}u.
\end{align*}
	Noting that $\Re (a_j) > 0$,  the geometric series formula ensures that the last equality holds.
	\fi

\begin{thm}\label{GBZeta} For positive integer $m$, we have
\begin{align*}
\Gamma \left( 4m+1 \right)\sum_{j=1}^4{i  ^{j-2}\zeta _4\left( 4m+1,3+i^j|\mathbf{a}_4;\boldsymbol{\sigma }_4 \right)}&\in \mathbb{Q} \frac{\Gamma ^{8m}}{\pi ^{2m}}+\mathbb{Q} \frac{\Gamma ^{8m+4}}{\pi ^{2m+2}},
    			\\
\Gamma \left( 4m-1 \right) \sum_{j=1}^4{\left( -i \right) ^{j-2}}\zeta _4\left( 4m-1,3+i^j|\mathbf{a}_4,\boldsymbol{\sigma }_4 \right) &\in \mathbb{Q} \frac{\Gamma ^{8m-4}}{\pi ^{2m-1}}+\mathbb{Q} \frac{\Gamma ^{8m}}{\pi ^{2m+1}},
    			\\
\Gamma \left( 4m+1 \right) \sum_{j=1}^4{\left( -i \right) ^{j-2}}\zeta _4(4m+1,3+i^j|\mathbf{a}_4)&\in \mathbb{Q} \frac{\Gamma ^{8m}}{\pi ^{2m}}+\mathbb{Q} \frac{\Gamma ^{8m+4}}{\pi ^{2m+2}},
    			\\
\Gamma \left( 4m-1 \right)\sum_{j=1}^4{i  ^{j-2}\zeta _4\left( 4m-1,3+i^j|\mathbf{a}_4 \right)}&\in \mathbb{Q} \frac{\Gamma ^{8m-4}}{\pi ^{2m}}+\mathbb{Q} \frac{\Gamma ^{8m}}{\pi ^{2m+1}}\quad (m\ge 2),
\end{align*}
where $\mathbf{a}_4=\left( 2-2i,2+2i,1-i,1+i \right) $ and $\boldsymbol{\sigma }_4=\left( \left\{ 1 \right\}^2,\left\{ -1 \right\}^2 \right)$.
\end{thm}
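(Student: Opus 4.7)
The plan is to express each of the four weighted sums of generalized Barnes multiple zeta functions in the statement as $\tfrac14$ times one of the mixed Berndt-type integrals $B^{\pm}_{\pm}$, and then to invoke Theorems~\ref{mixInt1} and~\ref{mixInt2}. The bridge is the integral representation in the preceding proposition.

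First I will evaluate $\prod_{j=1}^4(1-\sigma_j e^{-a_j u})^{-1}$ in closed form for $\mathbf{a}_4=(2-2i,2+2i,1-i,1+i)$. For the sign vector $\boldsymbol{\sigma}_4=(1,1,-1,-1)$ (cases one and two), the elementary identities
\begin{align*}
1-e^{-(2\pm2i)u}&=2e^{-(1\pm i)u}\sinh((1\pm i)u),\\
1+e^{-(1\pm i)u}&=2e^{-(1\pm i)u/2}\cosh((1\pm i)u/2),
\end{align*}
together with $\sinh((1+i)u)\sinh((1-i)u)=\sinh^2 u+\sin^2 u$ and $2\cosh((1+i)u/2)\cosh((1-i)u/2)=\cosh u+\cos u$, cause the exponential prefactors to telescope to $e^{-3u}$ and give
\[
\prod_{j=1}^4(1-\sigma_j e^{-a_j u})^{-1}=\frac{e^{3u}}{8(\sinh^2 u+\sin^2 u)(\cosh u+\cos u)}.
\]
For $\boldsymbol{\sigma}_4=(1,1,1,1)$ (cases three and four) one replaces $1+e^{-(1\pm i)u}$ by $1-e^{-(1\pm i)u}=2e^{-(1\pm i)u/2}\sinh((1\pm i)u/2)$ and uses $2\sinh((1+i)u/2)\sinh((1-i)u/2)=\cosh u-\cos u$, yielding the analogous formula with $\cosh u-\cos u$ in the denominator. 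In either case the choice of shift $\omega=3+i^j$ cancels exactly $e^{3u}$ and leaves $e^{-i^j u}$ in the integrand.

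The remaining step is one short algebraic identity. A direct expansion gives
\[
\sum_{j=1}^4 i^{j-2}e^{-i^j u}=-ie^{-iu}+e^u+ie^{iu}-e^{-u}=2(\sinh u-\sin u),
\]
and symmetrically $\sum_{j=1}^4(-i)^{j-2}e^{-i^j u}=2(\sinh u+\sin u)$, obtained by pairing the $j=2,4$ exponentials into $2\sinh u$ and the $j=1,3$ ones into $\mp 2\sin u$. Substituting into the integral representation, the four sums in the statement evaluate respectively to $\tfrac14 B^-_+(4m)$, $\tfrac14 B^+_+(4m-2)$, $\tfrac14 B^+_-(4m)$, and $\tfrac14 B^-_-(4m-2)$, and Theorems~\ref{mixInt1} and~\ref{mixInt2} then supply the claimed $\mathbb{Q}$-structure in $\Gamma:=\Gamma(1/4)$ and $\pi^{-1}$.

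There is no serious obstacle. The only care needed is to track the exponential prefactors so that $\omega=3+i^j$ is the correct shift, and to couple the sign pattern in $\boldsymbol{\sigma}_4$ with the $(\cosh u\pm\cos u)$ factor in the denominator of the corresponding Berndt integral. Whenever the series defining $\zeta_4$ fails to converge absolutely, one reads the identity through its integral representation, which remains valid on the half-plane where the $B^{\pm}_{\pm}$-integrals converge.
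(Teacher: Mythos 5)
Your argument is correct and follows essentially the same route as the paper: both identify each weighted sum with $\tfrac14$ of a mixed Berndt-type integral via the integral representation of $\zeta_4$ (you re-derive explicitly the product/telescoping identity that the paper imports from Bradshaw--Vignat) and then invoke Theorems~\ref{mixInt1} and~\ref{mixInt2}. One remark: your computation yields $\mathbb{Q}\,\Gamma^{8m-4}/\pi^{2m-1}$ in the fourth inclusion, which agrees with Theorem~\ref{mixInt2} and the $m=2$ examples, so the exponent $\pi^{2m}$ in the stated theorem appears to be a typo rather than a defect of your proof.
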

\begin{proof}
Noting the facts that 	
\begin{align*}
&\frac{\left( \sinh x\pm \sin x \right)}{\left[ \sinh ^2x+\sin ^2x \right] \left[ \cosh x-\cos x \right]}=\frac{e^x-e^{-x}\mp ie^{ix}\pm ie^{-ix}}{4\sinh \left( \left( 1-i \right) x \right) \sinh \left( \left( 1+i \right) x \right) \sinh \left( \frac{1-i}{2}x \right) \sinh \left( \frac{1+i}{2}x \right)},
\\
&\frac{\left( \sinh x\pm \sin x \right)}{\left[ \sinh ^2x+\sin ^2x \right] \left[ \cosh x+\cos x \right]}=\frac{e^x-e^{-x}\mp ie^{ix}\pm ie^{-ix}}{4\sinh \left( \left( 1-i \right) x \right) \sinh \left( \left( 1+i \right) x \right) \cosh \left( \frac{1-i}{2}x \right) \cosh \left( \frac{1+i}{2}x \right)}.
\end{align*}
From  the proof of\cite[Prop. 2]{BV2024}, we have
 \begin{align*}
\int_0^{\infty}{\frac{x^{s-1}e^{-\omega x}}{\prod_{i=1}^M{\sinh \left( a_1x \right)}\prod_{j=1}^N{\cosh \left( b_jx \right)}}}\mathrm{d}x=2^{M+N}\Gamma \left( s \right) \zeta _{M+N}\left( s,w+\sum_{i=1}^M{a_i+}\sum_{i=1}^N{b_i}|\boldsymbol{c}_{M+N},\boldsymbol{\sigma }_{M+N} \right),
\end{align*}
where $
\boldsymbol{c}_{M+N}=\left( 2a_1,\cdots ,2a_M,2b_1,\cdots ,2b_N \right) , \boldsymbol{\sigma }_{M+N}=\left( \left\{ 1 \right\}^M,\left\{ -1 \right\}^N \right) .
$ Thus,
we can easily deduce that
 \begin{align*}
    	\int_0^{\infty}{\frac{x^{4m-2}\left( \sin x+\sinh x \right) \mathrm{d}x}{\left[ \sinh ^2x+\sin ^2x \right] \left[ \cosh x+\cos x \right]}}
    	&=4\Gamma \left( 4m-1 \right) \sum_{j=1}^4{\left( -i \right) ^{j-2}}\zeta _4\left( 4m-1,3+i^j|\mathbf{a}_4,\boldsymbol{\sigma }_4 \right) ,\\
    \int_0^{\infty}{\frac{x^{4m}\left( \sinh x-\sin x \right) \mathrm{d}x}{\left[ \sinh ^2x+\sin ^2x \right] \left[ \cosh x+\cos x \right]}}
  &=4\Gamma \left( 4m+1 \right) \sum_{j=1}^4{ i  ^{j-2}}\zeta _4\left( 4m+1,3+i^j|\mathbf{a}_4,\boldsymbol{\sigma }_4 \right) ,
    	\\
      	\int_0^{\infty}{\frac{x^{4m}\left( \sinh x+\sin x \right) \mathrm{d}x}{\left[ \sinh ^2x+\sin ^2x \right] \left[ \cosh x-\cos x \right]}}
  &	=4\Gamma \left( 4m+1 \right) \sum_{j=1}^4{\left( -i \right) ^{j-2}}\zeta _4(4m+1,3+i^j|\mathbf{a}_4),
    			\end{align*}
and
    	 \begin{align*}
    		\int_0^{\infty}{\frac{x^{4m-2}\left( \sinh x-\sin x \right) \mathrm{d}x}{\left[ \sinh ^2x+\sin ^2x \right] \left[ \cosh x-\cos x \right]}}	=4\Gamma \left( 4m-1 \right) \sum_{j=1}^4{i  ^{j-2}}\zeta _4(4m+1,3+i^j|\mathbf{a}_4),
    	\end{align*}
where $\mathbf{a}_4=\left( 2-2i,2+2i,1-i,1+i \right) $ and $\boldsymbol{\sigma }_4=\left( \left\{ 1 \right\}^2,\left\{ -1 \right\}^2 \right)$.
Finally, using Theorems \ref{mixInt1} and \ref{mixInt2} yields the desired evaluations.
\end{proof}
Using the formulas in Theorem \ref{GBZeta}, we can easily compute the following for \(m = 1, 2\) using \emph{Mathematica}:
\begin{exa}
	Let $\Gamma=\Gamma(1/4)$. Then
\begin{align*}
	&i\zeta _4\left( 5,3-i|\mathbf{a}_4;\boldsymbol{\sigma }_4 \right) -i\zeta _4\left( 5,3+i|\mathbf{a}_4;\boldsymbol{\sigma }_4 \right) 	+\zeta _4\left( 5,2|\mathbf{a}_4;\boldsymbol{\sigma }_4 \right) -\zeta _4\left( 5,4|\mathbf{a}_4;\boldsymbol{\sigma }_4 \right)\\& =
	\frac{\Gamma ^8}{32768\pi ^2}-\frac{\Gamma ^{12}}{786432\pi ^4},
\\
	&i\zeta _4\left( 9,3-i|\mathbf{a}_4;\boldsymbol{\sigma }_4 \right)-i\zeta _4\left( 9,3+i|\mathbf{a}_4;\boldsymbol{\sigma }_4 \right) 	+\zeta _4\left( 9,2|\mathbf{a}_4;\boldsymbol{\sigma }_4 \right) -\zeta _4\left( 9,4|\mathbf{a}_4;\boldsymbol{\sigma }_4 \right) \\&=
\frac{11\Gamma ^{20}}{112742891520\pi ^6}-\frac{\Gamma ^{16}}{671088640\pi ^4},
	\\&
	i\zeta _4\left( 3,3+i|\mathbf{a}_4;\boldsymbol{\sigma }_4 \right)-i\zeta _4\left( 3,3-i|\mathbf{a}_4;\boldsymbol{\sigma }_4 \right) 
	+\zeta _4\left( 3,2|\mathbf{a}_4;\boldsymbol{\sigma }_4 \right) -\zeta _4\left( 3,4|\mathbf{a}_4;\boldsymbol{\sigma }_4 \right)\\& =\frac{\Gamma ^8}{2^{12}\pi ^3}-\frac{\Gamma ^4}{2^9\pi},
	\\
&	i\zeta _4\left( 7,3+i|\mathbf{a}_4;\boldsymbol{\sigma }_4 \right)-i\zeta _4\left( 7,3-i|\mathbf{a}_4;\boldsymbol{\sigma }_4 \right) 
	+\zeta _4\left( 7,2|\mathbf{a}_4;\boldsymbol{\sigma }_4 \right) -\zeta _4\left( 7,4|\mathbf{a}_4;\boldsymbol{\sigma }_4 \right)\\& =\frac{\Gamma ^{12}}{5242880\pi ^3}-\frac{\Gamma ^{16}}{125829120\pi ^5},
\end{align*}
and
\begin{align*}
&	\zeta _4(5,2|\mathbf{a}_4)-i\zeta _4(5,3-i|\mathbf{a}_4)+i\zeta _4(5,3+i|\mathbf{a}_4)-\zeta _4(5,4|\mathbf{a}_4)
	=\frac{\Gamma ^{12}}{786432\pi ^4}-\frac{\Gamma ^8}{98304\pi ^2},
	\\
&	\zeta _4(9,2|\mathbf{a}_4)-i\zeta _4(9,3-i|\mathbf{a}_4)+i\zeta _4(9,3+i|\mathbf{a}_4)-\zeta _4(9,4|\mathbf{a}_4)
	=\frac{9\Gamma ^{16}}{4697620480\pi ^4}-\frac{11\Gamma ^{20}}{
		112742891520\pi ^6},
	\\
&i\zeta _4(7,3-i|\mathbf{a}_4)-i	\zeta _4(7,3+i|\mathbf{a}_4)+\zeta _4(7,2|\mathbf{a}_4)-\zeta _4(7,4|\mathbf{a}_4)
	=\frac{\Gamma ^{16}}{75497472\pi ^5}-\frac{\Gamma ^{12}}{5242880\pi ^3},
\end{align*}
where $\mathbf{a}_4=\left( 2-2i,2+2i,1-i,1+i \right) $ and $\boldsymbol{\sigma }_4=\left( \left\{ 1 \right\}^2,\left\{ -1 \right\}^2 \right)$.
	\end{exa}

{\bf Conflict of Interest.} The author declare that they have no conflict of interest.

{\bf Data Availability} Data sharing is not applicable to this article as no new data were created or analyzed in this study.

{\bf Use of AI Tools Declaration.} The author declare they have not used Artificial Intelligence (AI) tools in the creation of this article.

{\bf Acknowledgements}: The author would like to thank the referee for his/her careful reading of the paper and many helpful comments.

\end{document}